\DeclareMathAlphabet{\cmcal}{OMS}{cmsy}{m}{n}
\definecolor{blue1}{rgb}{0, 0, 2}
\definecolor{sky}{rgb}{0, 0.2, 0.8}
\newtheorem{theorem}{Theorem}[section]
\newtheorem{corollary}[theorem]{Corollary}
\newtheorem{lemma}[theorem]{Lemma}
\newtheorem{proposition}[theorem]{Proposition}
\newtheorem{conjecture}[theorem]{Conjecture}
\theoremstyle{definition}
\newtheorem{definition}[theorem]{Definition}
\theoremstyle{remark}
\newtheorem{remark}[theorem]{\bf{Remark}}
\newtheorem{assumption}[theorem]{\bf{Assumption}}
\newtheorem{notation}[theorem]{\bf{Notation}}
\newtheorem{question}[theorem]{\bf{Question}}
\newtheorem{example}[theorem]{\bf{Example}}
\newtheorem*{theorem*}{\bf{Theorem}}
\newtheorem*{claim*}{\bf{Claim}}
\newtheorem*{remark*}{\bf{Remark}}
\newtheorem*{remarks*}{\bf{Remarks}}
\newtheorem*{example*}{\bf{Example}}
\newtheorem*{examples*}{\bf{Examples}}
\newcommand{\C}{{\mathbb{C}}}
\newcommand{\F}{{\mathbb{F}}}
\newcommand{\bP}{{\mathbb{P}}}
\newcommand{\Q}{{\mathbb{Q}}}
\newcommand{\T}{{\mathbb{T}}}
\newcommand{\Z}{{\mathbb{Z}}}
\newcommand{\fa}{{\mathfrak{a}}}
\newcommand{\fb}{{\mathfrak{b}}}
\newcommand{\fm}{{\mathfrak{m}}}
\newcommand{\fn}{{\mathfrak{n}}}
\newcommand{\cC}{{\cmcal{C}}}
\newcommand{\cE}{{\cmcal{E}}}
\newcommand{\cF}{{\cmcal{F}}}
\newcommand{\cI}{{\cmcal{I}}}
\newcommand{\cJ}{{\cmcal{J}}}
\newcommand{\cQ}{{\cmcal{Q}}}
\newcommand{\cS}{{\cmcal{S}}}
\newcommand{\cT}{{\cmcal{T}}}
\def\a{\alpha}
\def\b{\beta}
\def\d{\delta}
\def\e{\epsilon}
\def\g{\gamma}
\def\l{\lambda}
\def\o{\omega}
\def\s{\sigma}
\def\z{\zeta}
\def\p{\varphi}
\def\vp{\varpi}
\def\D{\Delta}
\def\G{\Gamma}
\def\S{\Sigma}
\def\O{\Omega}
\def\<{\left\langle}
\def\>{\right\rangle}
\newcommand{\zmod}[1]{{\Z/{#1}\Z}}
\newcommand{\inj}{\hookrightarrow}
\newcommand{\surj}{\twoheadrightarrow}
\newcommand{\arinj}{\ar@{^(->}}
\newcommand{\arsurj}{\ar@{->>}}
\newcommand{\arsub}{\ar@{}[r]|-*[@]{\subset}}
\newcommand{\arsup}{\ar@{}[r]|-*[@]{\supset}}
\newcommand{\arcap}{\ar@{}[d]|-*[@]{\subset}}
\newcommand{\arcup}{\ar@{}[u]|-*[@]{\subset}}
\newcommand{\arin}{\ar@{}[u]|-*[@]{\in}}
\renewcommand{\pmod}[1]{{\,(\operatorname{mod}\hspace{0.7mm} {#1})}}
\renewcommand{\mod}[1]{{\,\operatorname{mod}\hspace{0.7mm} {#1}}}
\newcommand{\Ext}{{\operatorname{Ext}}}
\newcommand{\Hom}{{\operatorname{Hom}}}
\newcommand{\Gal}{{\operatorname{Gal}}}
\newcommand{\Pic}{{\operatorname{Pic}}}
\newcommand{\End}{{\operatorname{End}}}
\newcommand{\GL}{{\operatorname{GL}}}
\newcommand{\Ta}{{\operatorname{Ta}}}
\newcommand{\Spec}{{\operatorname{Spec}}}
\newcommand{\sHom}{{\mathscr{H}\kern-.5pt om}}
\newcommand{\sExt}{{\mathscr{E}\kern-.5pt xt}}
\newcommand{\Frob}{{\mathsf{Frob}}}
\newcommand{\old}{{\mathsf{old}}}
\newcommand{\new}{{\mathsf{new}}}
\newcommand{\sqf}{{\mathsf{sqf}}}
\renewcommand{~}{\hspace*{0.5mm}}
\newcommand{\ts}{{\textsection}}
\newcommand{\ra}{\rightarrow}
\newcommand{\wt}{\widetilde}
\newcommand{\ov}{\overline}
\newcommand{\ud}{\underline}
\newcommand{\bd}{\boldsymbol}
\newcommand{\ms}{\medskip}
\newcommand{\sm}{\smallsetminus}
\mathchardef\hyp="2D
\newcommand{\xyv}[1]{\xymatrixrowsep{#1 pc}}
\newcommand{\xyh}[1]{\xymatrixcolsep{#1 pc}}
\newcommand{\qa}{{\quad \text{and} \quad}}
\newcommand{\mat}[4]{
 \left(  \begin{smallmatrix} #1 & #2 \\ #3 & #4 \end{smallmatrix} \right)}
\newcommand{\bmat}[4]{
  \left( \begin{array}{cc} #1 & #2 \\ #3 & #4 \end{array} \right)}
\newcommand{\vect}[2]{
 \left(  \begin{smallmatrix} #1 \\ #2 \end{smallmatrix} \right)}
\newcommand{\bect}[2]{
  \left[ \begin{smallmatrix} #1  \\ #2 \end{smallmatrix} \right]}
\newcommand{\plim}[1]{\lim\limits_{\substack{\longleftarrow\\{#1}}}\;}
\newcommand{\plims}[1]{\lim\limits_{\substack{\leftarrow ~{#1}}}\;}
\newcommand{\ilim}[1]{\lim\limits_{\substack{\longrightarrow\\{#1}}}\;}
\newcommand{\ilims}[1]{\lim\limits_{\substack{\rightarrow ~{#1}}}\;}
\begin{document}

\title{The kernel of a rational Eisenstein prime at non-squarefree level}

\author{Hwajong Yoo}
\address{IBS Center for Geometry and Physics,
Mathematical Science Building, Room 108,
Pohang University of Science and Technology,
77 Cheongam-ro, Nam-gu, Pohang, Gyeongbuk, Republic of Korea 37673}
\email{hwajong@gmail.com}                                                   
\thanks{This work was supported by IBS-R003-D1.}                 
\subjclass[2010]{11F33, 11F80 (Primary); 11G18 (Secondary)}    
\maketitle

\begin{abstract}
Let $\ell \geq 5$ be a prime and let $N$ be a non-squarefree integer not divisible by $\ell$. For a rational Eisenstein prime $\fm$ of the Hecke ring $\T(N)$ of level $N$ acting on $J_0(N)$, we precisely compute the dimension of the kernel $J_0(N)[\fm]$ under a mild assumption. 
In the case of level $qr^2$ which violates our mild assumption, we propose a conjecture based on Sage computations. Assuming this conjecture, we complete our computation in all the remaining cases.
\end{abstract}


\section{Introduction}
Let $N$ be a positive integer. Consider the modular curve $X_0(N)$ over $\Q$ and its Jacobian variety $J_0(N):=\Pic^0(X_0(N))$ over $\Q$. Let $T_n$ denote the $n$-th Hecke operator in the endomorphism ring $\End(J_0(N))$ of $J_0(N)$ and let $\T(N)$ denote the $\Z$-subring of $\End(J_0(N))$ generated by $T_n$ for all integers $n\geq 1$. (For the precise definition, and for details of what follows, see Section \ref{section : background}.)
\ms

For a maximal ideal $\fm$ of $\T(N)$ containing a prime $\ell$, there is a continuous odd semisimple mod $\ell$ Galois representation 
\begin{equation*}
\rho_\fm : G_\Q \to \GL(2, k_\fm),
\end{equation*}
where $G_\Q:=\Gal(\ov{\Q}/\Q)$ and $k_\fm:=T(N)/\fm$, satisfying
\begin{equation*}
\text{trace} (\Frob_p ) = T_p \pmod \fm \qa \text{det} (\Frob_p) = p \pmod \fm
\end{equation*}
for all primes $p$ not dividing $N\ell$ (\cite[Th. 6.7]{DS74} or \cite[Prop. 5.1]{R90}). 
Here $\Frob_p$ denotes a Frobenius element for $p$ in $G_\Q$. The representation $\rho_\fm$ is unique up to isomorphism.

\ms
We say that $\fm$ is {\sf Eisenstein} (resp. {\sf non-Eisenstein}) if $\rho_\fm$ is reducible (resp. irreducible), and 
that $\fm$ is a {\sf rational Eisenstein prime} if $\rho_\fm \simeq \mathbf{1} \oplus \chi_\ell$, where 
$\mathbf{1}$ is the trivial character and $\chi_\ell$ is the mod $\ell$ cyclotomic character. 
Note that a rational Eisenstein prime contains $\cI_0(N)$, where
\begin{equation*}
\cI_0(N):=(T_p-p-1 : \text{ for all primes $p$ not dividing $N$}) 
\end{equation*}
is an Eisenstein ideal of $\T(N)$ (cf. \cite[\ts 3]{Yoo6}). Conversely, if a maximal ideal contains $\ell$ and $\cI_0(N)$, then it is a rational Eisenstein prime by a standard argument using the Brauer-Nesbitt theorem and Chebotarev density theorem. We also note that 
\begin{equation*}
\fm \text{ is an Eisenstein prime} \iff \fm \text{ is a rational Eisenstein prime}
\end{equation*}
when $N$ is squarefree \cite[Prop. 2.1]{Yoo2}, but there do exist Eisenstein primes which are not rational Eisenstein primes if $N$ is not squarefree (Remark \ref{remark : non rational-Eisenstein}). 
\ms

For a maximal ideal $\fm$ of $\T(N)$ containing a prime $\ell$, by the {\sf kernel of $\fm$} we mean the following $k_\fm[G_\Q]$-module:
\begin{equation*}
J_0(N)[\fm]:=\{ x \in J_0(N)(\ov{\Q}) : Tx =0 \text{ for any } T \in \fm \}. 
\end{equation*}
This kernel has been studied for decades. It is known that $\dim_{k_\fm} J_0(N)[\fm] \geq 2$ (Lemma \ref{lemma : dim at least 2}) and 
\begin{equation*}
\dim_{k_\fm} J_0(N)[\fm] = 2 \iff \T_{\fm} \text{ is Gorenstein}
\end{equation*}
where $\T_\fm$ is the completion of $\T(N)$ at $\fm$, i.e., $\T_\fm := \plims k \T(N)/{\fm^k}$ (\cite[Appendix]{Ti97} or \cite[5.16]{Ces}). Note that the Gorenstein property of $\T_\fm$ (for non-Eisenstein $\fm$) plays a crucial role in the proof of Fermat's last theorem by Wiles \cite{Wi95} and Taylor--Wiles \cite{TW95}.
\ms

Suppose that $\fm$ is non-Eisenstein. 
Then by Mazur \cite{M77}, the semisimplification of $J_0(N)[\fm]$, which we denote by $J_0(N)[\fm]^\text{ss}$, is the direct sum of copies of $V_\fm$, where $V_\fm$ is the underlying irreducible two dimensional $k_\fm[G_\Q]$-module for $\rho_\fm$. If $V_\fm$ is absolutely irreducible (which is always true when $\ell\geq 3$), by Boston-Lenstra-Ribet \cite{BLR91}, $J_0(N)[\fm] \simeq V_\fm^{\oplus n}$ for some $n\geq 1$. In most cases, the multiplicity $n$ is one\footnote{If $N$ is a prime, it is true by Mazur \cite{M77} unless $\ell=2$ and $\fm$ is ordinary, i.e., $T_\ell \not\in \fm$. In general, if $\ell$ does not divide $2N$, it follows from \cite[Th. 5.2]{R90}. For more general results, see \cite[Th. 2.1]{Wi95} and the references therein.} and hence $\dim J_0(N)[\fm]=2$ and $J_0(N)[\fm] \simeq V_\fm$. For the cases where $\dim J_0(N)[\fm]$ is greater than $2$, often called {\sf failure of multiplicity one}\footnote{In the context of Jacobians of Shimura curves, see \cite{R90a}.}, see the work of Kilford \cite{Kil02} and of Mazur and Ribet \cite[Th. 2]{MR91}. 
\ms

Now suppose that $\fm$ is Eisenstein. Then $J_0(N)[\fm]$ is not a direct sum of copies of $V_\fm$. 
Instead, $J_0(N)[\fm]^\text{ss}$ is the direct sum of one dimensional $k_\fm[G_\Q]$-modules (cf. \cite[Ch. II, Prop.  14.1]{M77}).
Suppose further that $\fm$ is rational Eisenstein. 
Applying Mazur's argument in \cite{M77}, if $\ell$ does not divide $2N$ then we have an exact sequence
\begin{equation*}
\xymatrix{
0 \ar[r] & \zmod \ell \ar[r] & J_0(N)[\fm] \ar[r] & \mu_\ell^{\oplus n} \ar[r] & 0
}
\end{equation*}
for some $n\geq 1$ (Proposition \ref{proposition : mazur argument}). This ``multiplicity'' $n$ is one if and only if $\dim J_0(N)[\fm]=2$. 
When $N$ is a prime, then Mazur proves $\dim J_0(N)[\fm]=2$ without any assumption on residual characteristic $\ell$  \cite{M77}. His work is generalized to squarefree $N$ by Ribet and the author when $\ell$ does not divide $6N$ under a mild assumption \cite{RY, Yoo1}. In contrast to the previous discussion, the dimension of $J_0(N)[\fm]$ is often greater than $2$. For instance, let $p\equiv q \equiv 1 \pmod \ell$ be two distinct primes. Then, 
$\fm=(\ell, T_p-1, T_q-1, \cI_0(pq))$ is an Eisenstein prime and $\dim J_0(pq)[\fm]=5$.
\ms

In 2016, Lecouturier asked the author the following ``multiplicity one'' question:
\begin{question}\label{question1}
As before, let $\ell\geq 5$ be a prime and $p\equiv 1 \pmod \ell$ a prime. Let 
\begin{equation*}
\fm=(\ell, T_2, T_p-1, \cI_0(4p))
\end{equation*}
be a rational Eisenstein prime of $\T(4p)$. Is $\dim J_0(4p)[\fm]=2$?
\end{question}
In this paper, we answer this question more generally, we are interested in the kernel of a rational Eisenstein prime when $N$ is not squarefree, i.e., there is a prime $p$ such that $p^2$ divides $N$. To introduce our result, we need some notation: Fix a prime $\ell\geq 5$ and let $N$ be a non-squarefree integer prime to $\ell$.  Suppose that $\fm$ is a rational Eisenstein prime which is {\sf new} (see Section \ref{section : background} for the definition). 
Then by \cite[Lemma 2.2]{Yoo6} or \cite[\ts 2.(c)]{M78}, we get
\begin{equation*}
\fm=(\ell, T_p-\e(p), \cI_0(N) : \text{ for all prime divisors $p$ of $N$}), 
\end{equation*}
where $\e(p)=0$ if $p^2$ divides $N$ and $\e(p)=\pm 1$ if $p$ {\sf exactly} divides $N$, which we mean that $p$ divides $N$ but $p^2$ does not divide $N$. 
Therefore by appropriately ordering (and indexing) the prime divisors of $N$, we may assume that
\begin{itemize}
\item
$N=\prod_{i=1}^s p_i \prod_{j=1}^t q_j \prod_{k=1}^u r_k^{e(k)}$ with $e(k)\geq 2$; 
\item
$\fm = \fm_\ell(s, t, u)$, where we define
\begin{equation}\label{equation : m(s, t, u)}
\fm_\ell(s, t, u):=(\ell, T_{p_i}-1, T_{q_j}+1, T_{r_k}, \cI_0(N) : \text{ for all } i, j \text{ and } k).
\end{equation}
\end{itemize}

\begin{assumption}  \label{assumption : m=m(s, t, u)} When we write $\fm:=\fm_\ell(s, t, u)$, we {\sf always assume} that
\begin{itemize}
\item
$p_i \equiv 1 \pmod \ell$ for $1 \leq i \leq s_0$, $p_i \not\equiv 1 \pmod \ell$ for $s_0+1 \leq i \leq s$,
\item
$q_j \equiv -1 \pmod \ell$ for all $1\leq j \leq t$,
\item
$r_k \equiv 1 \pmod \ell$ for $1 \leq k \leq u_0$, $r_k \equiv -1 \pmod \ell$ for $u_0+1 \leq k \leq u_1$, 
\item
$r_k \not\equiv \pm 1 \pmod \ell$ for $u_1+1 \leq k \leq u$.
\end{itemize}
\end{assumption}

Before proceeding, we note that by Lemma \ref{lemma : maximal ideal assumption}
\begin{equation*}
\fm:=\fm_\ell (s, t, u) \text{ is maximal }  \iff s+u \geq 1 \text{ and } s_0+t+u_1 \geq 1.
\end{equation*}
We also note that our convention is not limited at all. 
Assume that $\fm:=\fm_{\ell} (s, t, u)$ is maximal, or equivalently, assume that $s+u \geq 1$ and $s_0+t+u_1 \geq 1$.
If $\fm$ is new then it is $q_j$-new and hence $q_j \equiv -1 \pmod \ell$ (cf. \cite[Th. 1.2(3)]{Yoo2}).
So, the second condition is necessary for $\fm$ to be new. 
In the cases where Assumption \ref{assumption : m=m(s, t, u)} is not fulfilled, we can compute the dimension of $J_0(N)[\fm]$ by computing the dimension for a lower level where Assumption \ref{assumption : m=m(s, t, u)} is satisfied.
(In general, Assumption \ref{assumption : m=m(s, t, u)} does not imply that $\fm$ is new. In other words, it is just a necessary condition for $\fm$ to be new but not a sufficient one\footnote{For example, take $\ell=5$, $p_1=11$ and $p_2=3$. Then $\fm:=\fm_5(2, 0, 0)$ is maximal but $\fm$ is not $p_2$-new.}. 
Instead of assuming $\fm$ is new in which case we don't know what is a ``verifiable'' sufficient condition in general, 
our convention just rules out some cases that cannot be new.)
\ms

Now, we can state our main theorem:
\begin{theorem}\label{theorem : the main theorem}
Let $\ell\geq 5$ be a prime, and let $N=\prod_{i=1}^s p_i \prod_{j=1}^t q_j \prod_{k=1}^u r_k^{e(k)}$ with $e(k)\geq 2$ not divisible by $\ell$.
Assume that $N$ is non-squarefree, i.e., $u\geq 1$.
Let $\fm:=\fm_\ell(s, t, u)$ be a rational Eisenstein prime of $\T(N)$. 
If $s_0 \neq s$, or $u_0 \neq u$, or $t=0$, then we have
\begin{equation*}
\dim J_0(N)[\fm] = 1+s_0+t+u_1.
\end{equation*}
\end{theorem}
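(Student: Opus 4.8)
The plan is to pin down the integer $n$ in the exact sequence $0\to\zmod\ell\to J_0(N)[\fm]\to\mu_\ell^{\oplus n}\to0$ supplied by Proposition \ref{proposition : mazur argument}, which applies since $\ell\ge5$ and $\ell\nmid N$ force $\ell\nmid2N$; as $k_\fm=\F_\ell$ for a rational Eisenstein prime, $\dim_{k_\fm}J_0(N)[\fm]=1+n$, so the theorem is exactly the assertion $n=s_0+t+u_1$. Since $\ell$ is odd, complex conjugation splits the sequence and $J_0(N)[\fm]=\zmod\ell\oplus J_0(N)[\fm]^-$ with $\dim J_0(N)[\fm]^-=n$, so everything reduces to controlling the multiplicative part of the $\fm$-kernel. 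I would bound it below and above, organizing the upper bound as an induction on the number $\omega=s+t+u$ of prime divisors of $N$ and, in the inductive step, by the $p$-old/$p$-new decomposition at a well-chosen prime $p\mid N$.

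For the lower bound $\dim J_0(N)[\fm]^-\ge s_0+t+u_1$, I would exhibit that many independent copies of $\mu_\ell$ inside $J_0(N)[\fm]$, built from $\mu$-type subgroups: the Shimura subgroup $\Sigma_0(N)=\ker(J_0(N)\to J_1(N))$, whose character group is essentially $(\zmod{N})^\times$ and whose $\fm$-torsion accounts for the primes $\equiv1\pmod\ell$ (the $p_i$ with $i\le s_0$ and the $r_k$ with $k\le u_0$, the exponents $e(k)\ge2$ adding nothing since $\ell\ne r_k$); and Shimura subgroups of $J_0(M)$ for proper divisors $M\mid N$ transported along degeneracy maps and twisted by Atkin--Lehner involutions, together with differences of Galois-conjugate cusps, which between them account for the primes $\equiv-1\pmod\ell$ (the $q_j$ and the $r_k$ with $u_0<k\le u_1$). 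Concretely, for a prime $q_j$ the operator $U_{q_j}$ on the $q_j$-old Shimura-type piece coming from $J_0(N/q_j)$ has characteristic polynomial $(X-1)(X-q_j)\equiv(X-1)(X+1)\pmod\ell$, and its $U_{q_j}=-1$ eigenline gives a $\mu_\ell$; the twist by $w_{q_j}$ (which is $\equiv+1$ on $\fm$) and the Galois-conjugate cuspidal divisors cover the cases where $N/q_j$ carries no prime $\equiv1\pmod\ell$. One then checks linear independence of the resulting $s_0+t+u_1$ lines from the $U_{p_i},U_{q_j},U_{r_k}$-eigenvalues and from good versus multiplicative reduction at the distinct bad primes, and adds the $\zmod\ell$ spanned by a rational cuspidal divisor.

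For the upper bound $\dim J_0(N)[\fm]^-\le s_0+t+u_1$ — the crux — the base case $\omega=1$ is $N=r^e$ with $e\ge2$ and, for $\fm$ maximal, $r\equiv\pm1\pmod\ell$, where one must prove multiplicity one $\dim J_0(r^e)[\fm]=2$, i.e.\ that $\T_\fm$ is Gorenstein at prime-power level. For $\omega\ge2$, pick $p\mid N$ (keeping the lower level non-squarefree when possible, and otherwise invoking the known squarefree-level results of Ribet and the author) and split $J_0(N)[\fm]^-$ into a $p$-old and a $p$-new part. The $p$-old part is controlled through the degeneracy maps by $J_0(N/p)[\fm']^-$ (resp.\ $J_0(N/r_k^2)[\fm']^-$), to which the inductive hypothesis applies after matching the count of contributing primes at the lower level — here one needs the degeneracy maps to be injective enough on $\fm$-torsion, an Ihara-type input; the $p$-new part is shown to have dimension at most $1$, and to vanish unless $p$ satisfies the relevant congruence. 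Bounding the $p$-new part is the technical heart: it amounts to analyzing the $\fm$-part of the character group of the toric reduction of $J_0(N)$ at $p$ (equivalently, an Eisenstein/supersingular Hecke module at level dividing $N$), using that $U_p$ acts there as a root of $X^2-T_pX+p$ with $T_p\equiv\e(p)\pmod\fm$. This is exactly where the hypothesis ``$s_0\ne s$ or $u_0\ne u$ or $t=0$'' is used: when it fails — all $p_i$ and $r_k$ are $\equiv1\pmod\ell$ while some $q_j\equiv-1$ — the new-at-$q_j$ module carries one additional class this estimate cannot exclude (the phenomenon the conjecture addresses at level $qr^2$), whereas under the hypothesis the new part behaves as predicted, the two bounds coincide, and $\dim J_0(N)[\fm]=1+s_0+t+u_1$.

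The decisive obstacle is the upper bound, and within it the precise value of the $p$-new contribution at each bad prime: this needs a careful study of the Hecke and Atkin--Lehner action on the component and character groups of $J_0(N)$ in mixed characteristic, with the prime-power primes $r_k$ requiring extra care (three rather than two degeneracy maps to $X_0(N/r_k^2)$, and a nontrivial Atkin--Lehner structure at $r_k$). A secondary difficulty is the base-case multiplicity-one statement $\dim J_0(r^e)[\fm]=2$, to be proved by a Gorenstein argument at prime-power level.
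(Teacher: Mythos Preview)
Your lower-bound strategy has a fatal gap. You want to exhibit $s_0+t+u_1$ independent copies of $\mu_\ell$ \emph{inside} $J_0(N)[\fm]$, sourced from the Shimura subgroup $\Sigma(N)$ and from Shimura subgroups at lower level transported by degeneracy maps. But in the non-squarefree setting $u\ge 1$ there is at least one prime $r_k$ with $T_{r_k}\equiv 0\pmod\fm$, and since $\ell\nmid N$ we have $r_k\not\equiv 0\pmod\ell$, so $T_{r_k}\not\equiv r_k\pmod\fm$. By the Ling--Oesterl\'e description (Proposition~\ref{proposition : Ling-Oesterle}) this forces $\Sigma(N)[\fm]=0$. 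Worse, by Vatsal's maximality theorem (Proposition~\ref{proposition : vatsal}), \emph{any} $\mu_\ell\subset J_0(N)$ already lies in $\Sigma(N)$; hence $J_0(N)[\fm]$ contains no copy of $\mu_\ell$ at all (this is exactly Corollary~\ref{corollary : structure of cQ[m]} combined with $\cQ[\fm]=J_0(N)[\fm]$). The splitting you write, $J_0(N)[\fm]=\zmod\ell\oplus J_0(N)[\fm]^-$, is only a decomposition of $\F_\ell$-vector spaces with complex-conjugation action, not of $G_\Q$-modules; the minus part is not a Galois submodule and is certainly not a sum of $\mu_\ell$'s. So the entire mechanism you propose for producing the lower bound cannot get started.

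The paper's approach is quite different from yours on both sides. For the upper bound it does not induct on the number of primes or analyse old/new pieces: it uses Vatsal's theorem to see that $\cQ[\fm]$ contains no $\mu_\ell$, and then the Brumer--Kramer computation $\dim\Ext_{\Z[1/N_0]}(\mu_\ell,\zmod\ell)=\#\cS_\fm$ immediately caps the number of $\mu_\ell$'s in the quotient (Theorem~\ref{theorem : upper bound of dimension}). This upper bound holds \emph{unconditionally}; you have mislocated the role of the hypothesis ``$s_0\neq s$ or $u_0\neq u$ or $t=0$''. That hypothesis is used only in the \emph{lower} bound: for each $q\in\cS_\fm$ one must embed into $J_0(N)[\fm]$ not a $\mu_\ell$, but a \emph{non-split} extension $E_q$ of $\mu_\ell$ by $\zmod\ell$ ramified precisely at $q$ (so $E_q\sim\cE_q$). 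These embeddings are produced by a level-raising argument from suitable auxiliary levels (Proposition~\ref{proposition : level raising}), and the hypothesis is what guarantees the existence of an auxiliary prime $p\not\equiv 1\pmod\ell$ (or allows Mazur's good/bad-prime machinery at prime level when $t=0$) so that the auxiliary $\cQ[\fn]$ is two-dimensional and isolates the ramification at $q$. Independence then comes for free from the distinct ramification loci, not from eigenvalue bookkeeping. Your inductive upper bound with its Ihara-type inputs and prime-power Gorenstein base case is therefore unnecessary, and your lower bound needs to be rebuilt around non-split extensions rather than $\mu$-type subgroups.
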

The theorem above gives an affirmative answer to Question \ref{question1} because it is the situation with 
\begin{itemize}
\item
$p_1=p$ and $r_1=2$;
\item
$s_0=s=1$, $t=0$, $u_0=u_1=0$ and $u=1$.
\end{itemize}
\ms

The sketch of the proof of this theorem is as follows: 
We first generalize a result of Mazur (Proposition \ref{proposition : mazur argument}).
Then, by the results of Ling-Oesterl\'e and Vatsal on the Shimura subgroup, we can show that $J_0(N)[\fm]$ is a {\sf prosaic nugget} in the sense of Brumer-Kramer (Remark \ref{remark : prosaic nugget}). So we can obtain an upper bound on its dimension (Theorem \ref{theorem : upper bound of dimension}). This bound turns out to be optimal if we find enough submodules which are ramified precisely at the primes in the set $\cS_\fm$ associated to $\fm$ (Theorem \ref{theorem : lower bound of dimension}). Under our assumption that $s_0\neq s$ or $u_0 \neq u$ or $t=0$, we can find such submodules by a level raising method (Proposition \ref{proposition : level raising}). 
\ms

We believe that the assumptions in the theorem above (except that $N$ is non-squarefree) are superfluous and the statement should be true for all the remaining cases. 
In the case of level $qr^2$ which violates the assumptions Sage computations  \cite{Sage} ``support'' our belief, so we propose the following:
\begin{conjecture}\label{conjecture : t=1, u=1}
Let $\ell\geq 5$ be a prime and 
let $N=qr^2$ with $r\equiv -q\equiv 1 \pmod \ell$.
Let 
\begin{equation*}
\fm:=\fm_\ell(0, 1, 1)=(\ell, T_q+1, T_r, \cI_0(qr^2))
\end{equation*}
be a rational Eisenstein prime of $\T(qr^2)$. 
Then, we have
\begin{equation*}
\dim J_0(qr^2)[\fm]=3.
\end{equation*}
\end{conjecture}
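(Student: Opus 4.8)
The plan is to prove the inequalities $\dim_{k_\fm}J_0(qr^2)[\fm]\leq 3$ and $\dim_{k_\fm}J_0(qr^2)[\fm]\geq 3$ separately, the first of which should be unconditional so that the content of the conjecture lies entirely in the second. For the upper bound I would appeal to Theorem \ref{theorem : upper bound of dimension}: by the results of Ling-Oesterl\'e and Vatsal on the Shimura subgroup, $J_0(qr^2)[\fm]$ is a prosaic nugget in the sense of Brumer-Kramer (Remark \ref{remark : prosaic nugget}) whose ramification is confined to $\ell$, $q$ and $r$ with the prescribed local conditions, and since the associated set is $\cS_\fm=\{q,r\}$ (here $s_0=0$, $t=1$, $u_1=u=1$) this gives $\dim_{k_\fm}J_0(qr^2)[\fm]\leq 1+|\cS_\fm|=3$. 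Thus only the lower bound remains.

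By Theorem \ref{theorem : lower bound of dimension} and the generalized Mazur sequence
\[0\ra \zmod{\ell}\ra J_0(qr^2)[\fm]\ra \mu_\ell^{\oplus n}\ra 0\]
of Proposition \ref{proposition : mazur argument}, it suffices to produce inside $J_0(qr^2)[\fm]$ one nonsplit extension of a $\mu_\ell$ ramified precisely at $r$ by $\zmod{\ell}$ and one nonsplit extension of a $\mu_\ell$ ramified precisely at $q$ by $\zmod{\ell}$: since these two have distinct ramification, their intersection is just the common $\zmod{\ell}$, so $n\geq 2$ and $\dim=3$. The first I would obtain by descent to level $r^2$: the hypothesis of Theorem \ref{theorem : the main theorem} holds there (as $t=0$), so $\dim J_0(r^2)[\fm']=2$ for $\fm':=\fm_\ell(0,0,1)=(\ell,T_r,\cI_0(r^2))$, and $J_0(r^2)[\fm']$ is itself such an extension, ramified at $r$ since $r\equiv 1\pmod\ell$. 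Transporting it along a suitable degeneracy map $J_0(r^2)\ra J_0(qr^2)$ and intersecting with $J_0(qr^2)[\fm]$ should yield the required $r$-submodule; here one checks injectivity of the degeneracy map on $\fm'$-torsion and that the image meets the $\fm$-part using $T_q\equiv q+1\pmod{\fm'}$ at level $r^2$ together with $X^2-(q+1)X+q=(X-1)(X-q)$ and $q\equiv -1\not\equiv 1\pmod\ell$.

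The $q$-submodule is the crux, and the reason the statement is only conjectural. Since $q\equiv -1\pmod\ell$, level-raising at $q$ should supply a mod-$\ell$ Hecke eigensystem congruent to $\fm$ in the $q$-new part of $S_2(\G_0(qr^2))$; equivalently, writing $\cX$ for the character group of the toric part of the N\'eron model of $J_0(qr^2)$ at $q$, one should have $\cX/\fm\cX\neq 0$, and then the $\fm$-torsion of $\Hom(\cX/\ell\cX,\mu_\ell)\subset J_0(qr^2)[\ell]$ is a submodule ramified precisely at $q$ of the required shape. The obstruction is that the standard level-raising input does not pin this down: the numerical congruence condition is automatic for the Eisenstein series, but since $\ell\mid q+1$ the mod-$\ell$ local behaviour at $q$ of the raised object is delicate, and precisely in the configuration excluded from Theorem \ref{theorem : the main theorem} --- the one prime away from $r^2$ being $q\equiv -1\pmod\ell$, so that there is no rational Eisenstein prime at level $q$ from which to raise --- Proposition \ref{proposition : level raising} does not produce this submodule directly. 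Establishing $\cX/\fm\cX\neq 0$, equivalently that the raised submodule is a nonsplit $\mu_\ell$-by-$\zmod{\ell}$ extension rather than a split one, appears to require a congruence number or a component-group order that I cannot control in closed form; the Sage computations confirm it, giving $\dim J_0(qr^2)[\fm]=3$ in the accessible range. A proof would seem to need either a direct construction of this submodule from the Eisenstein part of $\cX$, or an alternative lower-bound argument --- for instance via the Eisenstein parts of the component groups of $J_0(qr^2)$ at $q$ and at $r$ together with the cuspidal and Shimura subgroups.
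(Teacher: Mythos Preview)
The statement is Conjecture~\ref{conjecture : t=1, u=1}, which the paper does \emph{not} prove; so your proposal is to be compared against the paper's partial progress (Theorem~\ref{theorem : level qr - level qr2}) and its account of the obstruction, not against a proof.

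Your upper bound and your $r$-submodule are correct and coincide with what the paper establishes unconditionally. The bound $\dim J_0(qr^2)[\fm]\le 3$ is Theorem~\ref{theorem : upper bound of dimension} with $\cS_\fm=\{q,r\}$, noting $\S(qr^2)[\fm]=0$ (since $T_r\equiv 0\not\equiv r\pmod\fm$) so that $\cQ[\fm]=J_0(qr^2)[\fm]$. The $r$-ramified subextension is exactly $E_r=\cQ[\fm(r^2)]=J_0(r^2)[\fm(r^2)]$, injected into $\cQ[\fm]$ by Proposition~\ref{proposition : level raising}; this is the first line of the proof of Theorem~\ref{theorem : level qr - level qr2}. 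Your informal ``transport along a degeneracy map and intersect'' should be replaced by that proposition, which already packages the Hecke compatibility you sketch with $(X-1)(X-q)$.

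For the $q$-submodule the paper takes a different route from your character-group suggestion: it descends to the \emph{squarefree} intermediate level $qr$. With $\fn=(\ell,T_q+1,T_r-1,\cI_0(qr))$ one has $\dim J_0(qr)[\fn]\in\{2,3\}$ by \cite[Th.~4.5]{Yoo1}. Under the hypothesis $\dim J_0(qr)[\fn]=3$ the same reference shows $J_0(qr)[\fn]$ is unramified at $r$; since $\S(qr)[\fn]\simeq\mu_\ell$, the quotient $\cQ[\fn]$ is then two-dimensional, nonsplit, and unramified at $r$, hence $\cQ[\fn]\sim\cE_q$, and Proposition~\ref{proposition : level raising} injects it into $\cQ[\fm]$. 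This is Theorem~\ref{theorem : level qr - level qr2}, and the paper is explicit that it cannot remove the hypothesis $\dim J_0(qr)[\fn]=3$; it records a conjecture of Calegari--Ribet predicting when that hypothesis holds. You do not mention this intermediate-level approach.

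Your toric idea is not pursued in the paper. One caution: in Grothendieck's filtration the toric piece $J[\ell]^t\simeq\Hom(\cX/\ell\cX,\mu_\ell)$ lies inside the \emph{unramified} part $J[\ell]^{I_q}$; ramification at $q$ is detected by the quotient $J[\ell]/J[\ell]^{I_q}\hookrightarrow\cX/\ell\cX$ (cf.\ the proof of Proposition~\ref{proposition : ramification semistable}). So $\cX/\fm\cX\neq 0$ is closer to producing a nontrivial $q$-ramified \emph{quotient} of $J_0(qr^2)[\fm]$ than a submodule, though by the shape of $J_0(qr^2)[\fm]$ in Proposition~\ref{proposition : mazur argument} the two are linked. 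In any case neither route yields an unconditional proof, and your diagnosis that the entire content of the conjecture is the $q$-ramified piece agrees with the paper's.
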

This conjecture is true under a certain assumption as follows:
\begin{theorem}\label{theorem : level qr - level qr2}
Let $\ell\geq 5$ be a prime and 
let $N=qr^2$ with $r\equiv -q\equiv 1 \pmod \ell$.
Let 
\begin{equation*}
 \fn=(\ell, T_q+1, T_r-1, \cI_0(qr))
 \end{equation*}
be a rational Eisenstein prime of $\T(qr)$. 
If $\dim_{\F_\ell} J_0(qr)[\fn]=3$ then Conjecture \ref{conjecture : t=1, u=1} holds.
\end{theorem}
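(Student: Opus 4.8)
\emph{Sketch of the argument.} The plan is to play the unconditional upper bound furnished by the general theory against a lower bound obtained by transporting the $\fn$-torsion at level $qr$ up to level $qr^2$ along the two degeneracy maps. Since $\fm=\fm_\ell(0,1,1)$, Theorem \ref{theorem : upper bound of dimension} gives $\dim_{\F_\ell}J_0(qr^2)[\fm]\leq 3$ with no hypothesis, and by Proposition \ref{proposition : mazur argument} one has $0\to\zmod{\ell}\to J_0(qr^2)[\fm]\to\mu_\ell^{\oplus n}\to 0$ with $n\geq 1$; thus $\dim_{\F_\ell}J_0(qr^2)[\fm]\in\{2,3\}$, and it remains to rule out the value $2$, i.e., to prove the lower bound $\dim_{\F_\ell}J_0(qr^2)[\fm]\geq 3$.

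For the lower bound, let $\alpha_1,\alpha_r\colon X_0(qr^2)\to X_0(qr)$ be the two degeneracy maps, let $\alpha_1^*,\alpha_r^*\colon J_0(qr)\to J_0(qr^2)$ be the induced pullbacks, and set $\Phi=(\alpha_1^*,\alpha_r^*)\colon J_0(qr)^2\to J_0(qr^2)$. The pullbacks are Hecke-equivariant for every $T_p$ with $p\neq r$ --- in particular for $T_q$ and for the generators of $\cI_0(qr^2)$, which run over the primes not dividing $qr$, the same primes occurring among the generators of $\cI_0(qr)$ --- while $\Phi$ intertwines $T_r$ at level $qr^2$ with the operator $\mat{T_r}{1}{0}{0}$ on $J_0(qr)^2$ (up to transpose, which will be irrelevant), where inside the matrix $T_r$ denotes the operator at level $qr$. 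Since $T_r-1\in\fn$, this operator restricts to $\mat{1}{1}{0}{0}$ on $J_0(qr)[\fn]^2$, whose kernel is the antidiagonal $\{(x,-x):x\in J_0(qr)[\fn]\}$; under $(x,-x)\leftrightarrow x$, the restriction of $\Phi$ to this antidiagonal is $\varphi:=\alpha_1^*-\alpha_r^*$, and $\varphi\bigl(J_0(qr)[\fn]\bigr)$ lies in the kernel of $T_r$ on $J_0(qr^2)$. Matching the remaining Hecke eigenvalues --- those cut out by $T_q+1$ and by $\cI_0(qr^2)$ --- then gives $\varphi\bigl(J_0(qr)[\fn]\bigr)\subseteq J_0(qr^2)[\fm]$. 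Consequently, \emph{provided $\varphi$ is injective on $J_0(qr)[\fn]$}, the hypothesis $\dim_{\F_\ell}J_0(qr)[\fn]=3$ produces a $3$-dimensional $\F_\ell$-subspace of $J_0(qr^2)[\fm]$, which with the upper bound gives $\dim_{\F_\ell}J_0(qr^2)[\fm]=3$, i.e., Conjecture \ref{conjecture : t=1, u=1}.

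The remaining --- and, I expect, main --- obstacle is precisely the injectivity of $\varphi$ on $J_0(qr)[\fn]$, equivalently the vanishing of $J_0(qr)[\fn]\cap\Ker\varphi$. The usual device of composing with a pushforward is useless here: $\alpha_{1,*}\circ\varphi$ acts on $J_0(qr)[\fn]$ as $\deg(\alpha_1)-T_r=r-T_r$, which is zero because $r\equiv 1\equiv T_r\pmod\ell$ --- this numerical coincidence is exactly what puts the case beyond the reach of the level-raising argument behind Theorem \ref{theorem : the main theorem}. Instead I would bound $\Ker\varphi$ through the Shimura subgroup, using the results of Ling--Oesterl\'e and Vatsal already invoked for the upper bound: $\Ker\Phi$ is a finite group scheme of multiplicative type on which $T_q$ (that is, $U_q$) acts as $1$. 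Since $\fn$ forces $T_q\equiv -1$ and $\ell\geq 5$, the $\fn$-torsion of $\Ker\Phi$ is trivial; and since the antidiagonal copy of $J_0(qr)[\fn]$ lies in the eigenspace on which $T_q$ acts as $-1$, it meets $\Ker\Phi$ trivially, so $\varphi$ is injective on $J_0(qr)[\fn]$. Reformulated through Theorem \ref{theorem : lower bound of dimension}, the image of $\varphi$ supplies exactly the submodules of $J_0(qr^2)[\fm]$ ramified at the two primes $q,r$ of $\cS_\fm$ whose existence that theorem converts into the equality $\dim J_0(qr^2)[\fm]=3$.
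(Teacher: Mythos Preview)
Your injectivity claim for $\varphi=\alpha_r^*-\beta_r^*$ on $J_0(qr)[\fn]$ is where the argument breaks, and the justification you offer is incorrect. On the Shimura subgroup $\Sigma(qr)$ the Hecke operator $T_q$ acts as multiplication by $q$ (Ling--Oesterl\'e), and since $q\equiv -1\pmod\ell$ this is exactly the eigenvalue cut out by $\fn$; so your separation by the $T_q$-eigenvalue does not work. In fact the kernel is nontrivial for a structural reason: by Ling--Oesterl\'e both degeneracy pullbacks carry $\Sigma(qr)$ into $\Sigma(qr^2)$, hence $\varphi(\Sigma(qr)[\fn])\subseteq \Sigma(qr^2)\cap J_0(qr^2)[\fm]=\Sigma(qr^2)[\fm]$, and the latter is zero because $T_r\equiv 0\not\equiv r\pmod{\fm}$. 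Since Proposition~\ref{proposition : Ling-Oesterle} gives $\Sigma(qr)[\fn]\simeq\mu_\ell$ (here $T_q\equiv q$ and $T_r\equiv r$ both hold mod $\fn$, and only $r\equiv 1$), the map $\varphi$ kills a copy of $\mu_\ell$ and its image in $J_0(qr^2)[\fm]$ is at most $2$-dimensional. Your fallback to Theorem~\ref{theorem : lower bound of dimension} fails for the same reason: one knows (from \cite[Th.~4.5]{Yoo1}) that $J_0(qr)[\fn]$ is unramified at $r$, so its image under the $\Q$-rational map $\varphi$ is also unramified at $r$ and cannot supply the extension $\cE_r$.

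What the paper does is pass to the quotient $\cQ[\fn]=J_0(qr)[\fn]/\Sigma(qr)[\fn]$, which is $2$-dimensional under the hypothesis $\dim J_0(qr)[\fn]=3$; the level-raising map of Proposition~\ref{proposition : level raising} (Case~3, which on $\ell$-torsion agrees with your $\varphi$ since $r\equiv 1$) injects $\cQ[\fn]$ into $\cQ[\fm]=J_0(qr^2)[\fm]$, and the unramifiedness at $r$ identifies $\cQ[\fn]\sim\cE_q$. The missing piece ramified at $r$ is obtained not from level $qr$ but from level $r^2$: one has $\cQ[\fm(r^2)]=J_0(r^2)[\fm(r^2)]\sim\cE_r$, and it too injects into $\cQ[\fm]$ by Corollary~\ref{corollary : level raising}. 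With both $\cE_q$ and $\cE_r$ sitting inside $\cQ[\fm]$, Theorem~\ref{theorem : lower bound of dimension} gives $\dim J_0(qr^2)[\fm]=3$.
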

By \cite[Th. 4.5]{Yoo1}, $\dim_{\F_\ell} J_0(qr)[\fn]$ is either $2$ or $3$. Calegari and Ribet \cite{RY}
conjecture that 
\begin{equation*}
\dim_{\F_\ell} J_0(qr)[\fn]=3 \iff \text{$r$ splits completely in $\cF_q$}
\end{equation*}
where $\cF_q$ is a certain extension field of $\Q(\mu_\ell)$ of degree $\ell$, which is unramified outside the primes dividing $q$ (see Notation \ref{notation : extension at p}). Therefore, for a fixed prime $q$ with $q\equiv -1 \pmod \ell$ we expect that 
\begin{equation*}
\lim_{r \to \infty} \frac{\# \{ r \in S_\ell: \dim J_0(qr)[\fn]=3 \}}{\#S_\ell} = \frac{1}{\ell}
\end{equation*}
where $S_\ell$ is the set of primes which are congruent to $1$ modulo $\ell$. This discussion ``conjecturally'' explains that our assumption in Theorem \ref{theorem : level qr - level qr2} is often fulfilled but is a somewhat strong constraint.

\begin{example}
Here is ``verification'' of the conjecture of Calegari and Ribet by Sage \cite{Sage}.
\begin{itemize}
\item
For $q=19$ and $\ell=5$, $\dim_{\F_5} J_0(qr)[\fn]=3$ if and only if
\begin{equation*}
r=41, 101, 251, 521, 631, 691, 881 \text{ and } 991 \text{ for } r < 1000.
\end{equation*}
Such primes $r$ are precisely the splitting primes in the extension field $\Q(E[5])$ defined by the $5$-torsion points of $E$, where $E$ is the elliptic curve `38b1' in the Cremona table.
\item
For $q=29$ and $\ell=5$, $\dim_{\F_5} J_0(qr)[\fn]=3$ if and only if
\begin{equation*}
r=181, 191, 251, 401, 491, 541, 601, 701, 911 \text{ and } 971 \text{ for } r < 1000,
\end{equation*}
which are precisely the splitting primes in the extension field $\Q(F[5])$, where $F$ is the elliptic curve `58b1' in the Cremona table.
\end{itemize}
\end{example}
\ms

Without the assumption above, we have not been able to prove Conjecture \ref{conjecture : t=1, u=1}. However, 
we can compute the dimension of $J_0(N)[\fm]$ in all the remaining cases if Conjecture \ref{conjecture : t=1, u=1} holds.
\begin{theorem}\label{theorem : main theorem 2}
Let $\ell\geq 5$ be a prime, and let $N=\prod_{i=1}^s p_i \prod_{j=1}^t q_j \prod_{k=1}^u r_k^{e(k)}$ with $e(k)\geq 2$ not divisible by $\ell$.
Let $\fm:=\fm_\ell(s, t, u)$ be a rational Eisenstein prime of $\T(N)$. 
Suppose that $N$ is not squarefree, i.e., $u\geq 1$.
Suppose further that $t\geq 1$ and
\begin{equation*}
p_i \equiv -q_j \equiv r_k \equiv 1 \pmod \ell \text{ for all } i, j \text{ and } k.
\end{equation*}
In other words, $s_0=s$, $u_0=u$ and $t \geq 1$.
If Conjecture \ref{conjecture : t=1, u=1} holds,
then
\begin{equation*}
\dim J_0(N)[\fm]=1+s+t+u=1+s_0+t+u_0.
\end{equation*}
\end{theorem}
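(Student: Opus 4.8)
The plan is to combine the unconditional upper bound of Theorem~\ref{theorem : upper bound of dimension} with a lower bound obtained by bootstrapping from Conjecture~\ref{conjecture : t=1, u=1} through the level-raising machinery of Proposition~\ref{proposition : level raising}. Since we are in the regime $s_0=s$, $u_0=u$ (so that $u_1=u_0=u$) and $t\geq 1$, the set $\cS_\fm$ attached to $\fm$ has exactly $s_0+t+u_1=s+t+u$ elements, namely the primes $p_1,\dots,p_s$, $q_1,\dots,q_t$ and $r_1,\dots,r_u$; hence Theorem~\ref{theorem : upper bound of dimension} gives $\dim J_0(N)[\fm]\leq 1+s+t+u$ with no extra hypothesis. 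By Theorem~\ref{theorem : lower bound of dimension} it then suffices to produce, for every prime of $\cS_\fm$, a $G_\Q$-submodule of $J_0(N)[\fm]$ ramified precisely at that prime, equivalently to prove the reverse inequality. The whole difficulty is that in the present regime every divisor of $N$ of the same shape violates the hypothesis of Theorem~\ref{theorem : the main theorem}, so the level-raising argument used there has no available base case; Conjecture~\ref{conjecture : t=1, u=1} is exactly the seed that restores it.

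First I would set up an induction on the number of prime factors of $N$, building $N$ up from the base configuration $N_0=q_1 r_1^2$ (note that $t\geq1$ and $u\geq1$ force $q_1 r_1^2\mid N$) by successively (a) adjoining a new prime $p\equiv1\pmod\ell$ as an exact divisor, (b) adjoining a new prime $q\equiv-1\pmod\ell$ as an exact divisor, (c) adjoining $r^{e}$ with $e\geq2$ for a new prime $r\equiv1\pmod\ell$, and (d) raising the exponent $e(k)$ of an $r_k$ already present. At every intermediate level $N'$ the ideal $\fm'=\fm_\ell(s',t',u')$ is maximal by Lemma~\ref{lemma : maximal ideal assumption} (we always keep $r_1$, so $u'\geq1$, and $s_0'+t'+u_1'=s'+t'+u'\geq1$), hence a rational Eisenstein prime. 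The base case is Conjecture~\ref{conjecture : t=1, u=1}, which gives $\dim J_0(N_0)[\fm_0]=3=1+\#\cS_{\fm_0}$. For the inductive step, suppose $\dim J_0(N')[\fm']=1+\#\cS_{\fm'}$; in cases (a), (b), (c) an appropriate degeneracy map $J_0(N')\to J_0(N'')$ together with Proposition~\ref{proposition : level raising} transports the previously constructed submodules of $J_0(N')[\fm']$ into $J_0(N'')[\fm'']$ preserving their ramification, and in addition produces a submodule ramified at the new prime, while $\#\cS_{\fm''}=\#\cS_{\fm'}+1$; combined with the upper bound this forces $\dim J_0(N'')[\fm'']=1+\#\cS_{\fm''}$. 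In case (d) no prime is added, $\#\cS_{\fm''}=\#\cS_{\fm'}$, and only the injectivity of the transported submodules is needed. Iterating from $N_0$ up to $N$ yields $\dim J_0(N)[\fm]=1+s+t+u$.

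The hard part will be the inductive step: one has to check that Proposition~\ref{proposition : level raising} really applies at each intermediate level, all of which lie in the regime excluded from Theorem~\ref{theorem : the main theorem}. Concretely, one must verify (i) that the submodule produced at level $N''$ is ramified precisely at the newly adjoined prime, so that it is independent of the submodules carried over from $N'$ and the dimension jumps by a full $1$; (ii) that the degeneracy map does not collapse the old submodules or destroy their ramification, which amounts to controlling the $\fm$-part of the kernel of the degeneracy map, precisely the kind of input the Shimura-subgroup results of Ling-Oesterl\'e and Vatsal supply and which already underlies Theorem~\ref{theorem : upper bound of dimension}; and (iii) in case (d), that passing from $r_k^{e}$ to $r_k^{e+1}$ neither creates nor destroys ramified directions. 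Steps (i) and (ii) are the genuine work: essentially one re-runs the proof of Proposition~\ref{proposition : level raising} with the base hypothesis of Theorem~\ref{theorem : the main theorem} replaced by Conjecture~\ref{conjecture : t=1, u=1}, the rest being bookkeeping with $\cS_\fm$ and the exact sequence of Proposition~\ref{proposition : mazur argument}. One could alternatively phrase the base case via Theorem~\ref{theorem : level qr - level qr2}, but since Conjecture~\ref{conjecture : t=1, u=1} is assumed outright it enters here only as motivation.
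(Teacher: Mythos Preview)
Your overall strategy is right and matches the paper's: combine the upper bound of Theorem~\ref{theorem : upper bound of dimension} with Theorem~\ref{theorem : lower bound of dimension} by exhibiting, for each $q\in\cS_\fm$, an injection $E_q\hookrightarrow\cQ[\fm]$ (defined over $\Q$) with $E_q\sim\cE_q$. You also correctly observe $J_0(N)[\fm]=\cQ[\fm]$ here and that $\#\cS_\fm=s+t+u$.

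The gap is in your inductive step. Proposition~\ref{proposition : level raising} gives only an injection $\cQ[\fm(N')]\hookrightarrow\cQ[\fm(N'')]$ defined over $\Q$; since this map is $G_\Q$-equivariant, its image has \emph{exactly} the same ramification as $\cQ[\fm(N')]$. It does not ``in addition produce a submodule ramified at the new prime.'' So when you adjoin a new prime $q_j\equiv -1\pmod\ell$, the level-raising map from $N'$ contributes nothing ramified at $q_j$, and your dimension count stalls at $1+\#\cS_{\fm'}$ rather than $1+\#\cS_{\fm''}$. Your paragraph~(3) acknowledges this as ``the hard part'' but then says one ``re-runs the proof of Proposition~\ref{proposition : level raising},'' which cannot help: that proposition is purely about transporting, not creating, ramification. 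In particular, Conjecture~\ref{conjecture : t=1, u=1} cannot be used only once as a seed at $N_0=q_1r_1^2$; one needs it (or something equivalent) separately for \emph{each} $q_j$.

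The paper's proof avoids the induction entirely and is shorter: for each $q\in\cS_\fm$ it picks a small sublevel $M_q\mid N$ at which an $E_q\sim\cE_q$ is already available, and then applies Corollary~\ref{corollary : level raising} once to embed $E_q\hookrightarrow\cQ[\fm(M_q)]\hookrightarrow\cQ[\fm]$. Concretely, $E_{r_k}=\cQ[\fm(r_k^2)]$ (first paragraph of Case~1 in the proof of Theorem~\ref{theorem : the main theorem}); $E_{p_i}$ is obtained exactly as in the proof of Theorem~\ref{theorem : the main theorem} using the auxiliary prime $r_1$; and for each $q_j$ one takes $M_{q_j}=q_jr_1^2$, where Conjecture~\ref{conjecture : t=1, u=1} gives $\dim\cQ[\fm(q_jr_1^2)]=3=1+\#\cS_{\fm(q_jr_1^2)}$, so Remark~\ref{remark : when dim=1+Sm} yields $\cE_{q_j}\hookrightarrow\cQ[\fm(q_jr_1^2)]$. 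Thus Conjecture~\ref{conjecture : t=1, u=1} is invoked $t$ times (once per $q_j$, each paired with $r_1$), not once. Your induction can be repaired by inserting exactly these auxiliary embeddings at each step~(a),~(b),~(c), but then the induction scaffolding is doing no work and one may as well argue directly as the paper does.
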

The proof of this theorem uses the same strategy as the proof of Theorem \ref{theorem : the main theorem}. In fact, Conjecture \ref{conjecture : t=1, u=1} holds if and only if $J_0(qr^2)[\fm]$ is precisely ramified at both $q$ and $r$. (At the moment, the author only knows that it is ramified at $r$. The assumption in Theorem \ref{theorem : level qr - level qr2} implies that $J_0(qr^2)[\fm]$ is ramified at $q$ as well.) So, it contains ``enough'' submodules (cf. Remark \ref{remark : when dim=1+Sm}) and hence by a level raising method, we can verify the assumption in Theorem \ref{theorem : lower bound of dimension} as above.
\ms

\begin{remark}
One is immediately led to ask whether our methods can be generalized to general Eisenstein primes. It is clear that which information would be necessary for applying our methods (e.g. generalizations or modifications of the results in Section \ref{section : the structure of the kernel}).
For instance, if $\fm$ is Eisenstein but not rational Eisenstein, then $\rho_\fm \simeq \a \oplus \a^{-1} \chi_\ell$ for some non-trivial character $\a : G_\Q \to \ov{\F}_\ell^\times$ \cite[Th. 2.3]{FJ95}. To compute $\dim J_0(N)[\fm]$ for such $\fm$, some ideas in \cite{Wi80} are useful. We will not discuss this generalization in detail. 
\end{remark}
\begin{remark} \label{remark : non rational-Eisenstein}
An example of an Eisenstein prime which is not a rational Eisenstein prime (in level $\ell^2$, which does not fit into our framework) can be constructed as follows: Let $\o$ be the Teichm\"uller character, which is congruent to $\chi_\ell$ modulo $\ell$. 
Then take an Eisenstein series $E$ on $\G_1(\ell)$ of weight $2$ and character $\e$. 
The character $\e$ is ($\ell$-adically) of the form $\o^{k-2}$ for some even $k$.
Then, the constant term of $E$ is congruent to $\l B_k$, where $\l$ is a $\ell$-adic unit and $B_k$ is the $k$-th Bernoulli number. Suppose $B_k$ is divisible by $\ell$ (for some irregular prime $\ell$). Then, there is a cusp form $f$ of weight $2$ and character $\e$ which is congruent to $E$ modulo $\ell$ by Ribet \cite{R76}. The mod $\ell$ representation associated to $f$ is $\mathbf{1} \oplus\chi_\ell \e$.
So, the twist $\wt{f}$ of $f$ by $\o^{(2-k)/2}$ is a cusp form for $\G_0(\ell^2)$ whose mod $\ell$ Galois representation is $\a \oplus \a^{-1} \chi_\ell$ as desired. See \cite[\ts 5]{M78}, \cite[Th. 2.1]{Wi80} or \cite[Ex. 4.10]{Ste85} for relevant discussions on the character $\e$. See also \cite{GL86} for the discussion on Eisenstein primes of level $\ell^2$.
\end{remark}
\ms

This paper will proceed as follows. In Section \ref{section : background}, we recall several results on modular curves, their Jacobian varieties, the Hecke operators and so on. In Section \ref{section : the structure of the kernel}, we develop some ideas used in previous work of Mazur, Ribet and the author in order to study the basic structure of the kernel of a rational Eisenstein prime. In Section \ref{section : proof}, we prove the theorems in the introduction.

\subsection*{Acknowledgements} I would like to thank Kenneth Ribet for very helpful discussions and correspondence over the years.

\section{Background} \label{section : background}
In this section, we review basic theory of modular curves and Hecke operators. For more details, see \cite[\ts 2]{Yoo6} and references therein.
\ms

For a positive integer $N>1$, consider the modular curve $X_0(N)$ over $\Q$.
It is the compactified coarse moduli scheme associated with the stack of the pairs $(E, C)$, where $E$ is an elliptic curve and $C$ is a cyclic subgroup of $E$ of order $N$.
We denote by $J_0(N)$ its Jacobian, which is an abelian variety over $\Q$.
\ms

Let $N=Mp^n$ for $n\geq 1$ with $(M, p)=1$. There are natural {\sf degeneracy maps} 
\begin{equation*}
\a_p, \b_p : X_0(N)=X_0(Mp^n) \to X_0(Mp^{n-1})=X_0(N/p)
\end{equation*}
which, in the level of stacks, consists of ``forgetting the level $p$ structure'' and ``dividing by the level $p$ structure,'' respectively. More precisely, they send $(E, C_M, C_p)$ to $(E, C_M, C_p[p^{n-1}])$ and $(E/{C_p[p]}, (C_M\oplus C_p[p])/{C_p[p]}, C_p/{C_p[p]})$, respectively, where $E$ is an elliptic curve and $C_M$ (resp. $C_p$) is a cyclic subgroup of $E$ of order $M$ (resp. $p^n$). They induce the following maps via two functorialities of Jacobians:
\begin{equation*}
\xymatrix{
J_0(N)   \ar@<.5ex>[r]^-{\a_{p,*}} \ar@<-.5ex>[r]_-{\b_{p, *}} & J_0(N/p) \ar@<.5ex>[r]^-{\a_p^*} \ar@<-.5ex>[r]_-{\b_p^*} & J_0(N)
}
\end{equation*}
and we denote by 
\begin{equation*}
\g_p :=\a_p^*+\b_p^* : J_0(N/p) \times J_0(N/p) \to J_0(N)
\end{equation*}
the map sending $(x, y)$ to $\a_p^*(x)+\b_p^*(y)$. The image of $\g_p$ is called the {\sf $p$-old subvariety of $J_0(N)$} and denoted by $J_0(N)_{p\hyp\old}$. The quotient of $J_0(N)_{p\hyp\old}$ is called the {\sf $p$-new quotient of $J_0(N)$}. 
The abelian subvariety of $J_0(N)$ spanned by the $p$-old subvarieties for all prime divisors $p$ of $N$ is called the {\sf old subvariety of $J_0(N)$} and denoted by $J_0(N)_{\old}$. The quotient of $J_0(N)$ by $J_0(N)_\old$ is called the {new quotient of $J_0(N)$} and denoted by $J_0(N)^\new$.
\ms

Now, for a prime number $p$ the {\sf $p$-th Hecke operator $T_p$} is defined by the composition:
\begin{equation*}
\xymatrix{
T_p = \b_{p, *} \circ \a_p^* : J_0(N) \ar[r]^-{\a_p^*} & J_0(Np) \ar[r]^-{\b_{p, *}} & J_0(N).
}
\end{equation*}
The {\sf $n$-th Hecke operator $T_n$} are defined inductively as follows:
\begin{itemize}
\item
$T_1=1$;
\item
$T_{mn}=T_m T_n$ if $(m, n)=1$;
\item
$T_{p^k}=T_p T_{p^{k-1}}-pT_{p^{k-2}}$ for $k\geq 2$.
\end{itemize}
The {\sf Hecke ring of level $N$}, denoted by $\T(N)$, is the commutative subring of $\End(J_0(N))$ generated (over $\Z$) by $T_n$ for all integers $n\geq 1$.
\ms
 
As before, let $N=Mp^n$ for $n\geq 1$ with $(M, p)=1$. For distinction, let $T_p$ (resp. $\tau_p$) denote the $p$-th Hecke operator acting on $J_0(Mp^n)$ (resp. $J_0(Mp^{n-1})$). Then by the relation on the $p$-old subvariety
\begin{equation}\label{equation : Up and Tp}
T_p=\mat {\tau_p} p{-1} 0 \text{ if } n = 1 \qa T_p=\mat {\tau_p} p 0 0 \text{ if } n \geq 2,
\end{equation}
the map $\g_p$ is Hecke-equivariant. (For a prime $q\neq p$, $T_q$ diagonally acts on the $p$-old subvariety.)
Moreover $J_0(N)_{p\hyp\old}$ and $J_0(N)^{p\hyp\new}$ are both stable under the action of the Hecke operators. The image of $\T(N)$ in $\End(J_0(N)_{p\hyp\old})$ (resp. $\End(J_0(N)^{p\hyp\new})$) is called the {\sf $p$-old (resp. $p$-new) quotient of $\T(N)$} and denoted by $\T(N)^{p\hyp\old}$ (resp. $\T(N)^{p\hyp\new}$). 
We say a maximal ideal $\fm$ of $\T(N)$ is {\sf $p$-new} if it is still maximal after the projection $\T(N) \surj \T(N)^{p\hyp\new}$. Analogously, $J_0(N)^\new$ is stable under the action of the Hecke operators, and the image of $\T(N)$ in $\End(J_0(N)^\new)$ is called the {\sf new quotient of $\T(N)$} and denoted by $\T(N)^\new$. A maximal ideal of $\T(N)$ is called {\sf new} if it is still maximal after the natural projection $\T(N) \surj \T(N)^\new$. Note that if a maximal ideal is new then it is $p$-new for all prime divisors $p$ of $N$.
\ms

Now, we recall some results on the cusps of $X_0(N)$. 
The cusps of $X_0(N)$ may be conveniently described using Shimura's notation \cite[\ts 1.6]{Shi71}.
Let $\G_0(N)\backslash \bP^1(\Q)$ be the cusps on $X_0(N)(\C)$. Then for $a/b \in \G_0(N)\backslash \bP^1(\Q)$ with $a, b \in \Z$ and relatively prime, we denote the associated cusp on $X_0(N)$ by $\bect a b$. 
The cusps of $X_0(N)$ then are represented by $\bect x d$, where $d$ is a divisor of $N$, $1\leq x \leq d$, and $(x, d)=1$ with $x$ taken  modulo $t:=(d, N/d)$. (Such a cusp $\bect x d$ is called a
{\sf cusp of level $d$}.) The cusp $\bect x d$ is defined over $\Q(\mu_t)$, so the action of $\Gal(\ov \Q/\Q)$ on $\bect x d$ factors through $\Gal(\Q(\mu_t)/\Q)$. The action of $\Gal(\Q(\mu_t)/\Q)$ permutes all the cusps of level $d$. (For details, see \cite[\ts 1]{Og73}, \cite[\ts 3.8]{DS05} or \cite[\ts 2]{Lg97}.)

The {\sf rational cuspidal subgroup of $J_0(N)$}, denoted by $\cC(N)$, is the subgroup of $J_0(N)$ generated by the degree $0$ divisors which are supported only on the cusps and are stable under the action of $\Gal(\ov \Q/\Q)$. (By Manin and Drinfeld \cite{Ma72, Dr73}, it is finite.)
 It is generated by the elements of the form
\begin{equation*}
\sum_{d \mid N} a_d (P_d), \text{ with } \sum_{d \mid N} a_d \cdot \p( (d, N/d))=0,
\end{equation*}
where the divisor $(P_d)$ is defined as the sum of all the cusps of level $d$ and $\p$ is the Euler's totient function. (From its definition, it is easy to see that $(P_d)$ is of degree $\p ((d, N/d))$ and stable under the action of $\Gal(\ov \Q/\Q)$.)
\ms

Let $N=\prod p^{e(p)}$ be the prime power factorization of $N$, and let
\begin{equation*}
N^\sqf:=\prod_{e(p)=1} p \qa N^\square:=\prod_{e(p)\geq 2} p.
\end{equation*}
For a divisor $M$ of $N^\sqf$ with $MN^\square \neq 1$, we define a cuspidal divisor $\cC_{M, N} \in \cC(N)$ by the image in $J_0(N)(\Q)$ of the divisor
\begin{equation}
C_{M, N}=\sum_{d \mid MN^\square} (-1)^{\o(d)} \p(N^\square/(d, N^\square))(P_d),
\end{equation}
where $\o(d)$ is the number of distinct prime divisors of $d$. Then, we have the following:

\begin{proposition}\label{proposition : cuspidal element}
The order of $\cC_{M, N}$ is the numerator of 
\begin{equation*}
\frac{h\prod_{p \mid M} (p-1) \prod_{p \mid N,~p \nmid M} (p^2-1) \prod_{p \mid N^\square} p^{e(p)-2}}{24},
\end{equation*}
where $h$ is either $1$ or $2$. For a prime $q$, the Hecke operator $T_q$ acts on $\cC_{M, N}$ by the multiplication by 
\begin{equation*}
\begin{cases}
~~1 & \text{ if $q$ divides $M$},\\
~~q & \text{ if $q$ divides $N^\sqf/M$},\\
~~0 & \text{ if $q^2$ divides $N$},\\
q+1 & \text{ if $q$ does not divide $N$}.
\end{cases}
\end{equation*}
\end{proposition}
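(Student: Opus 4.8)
The plan is to compute the order of the cuspidal divisor $\cC_{M,N}$ by reducing to a computation at each prime separately, and then to verify the Hecke eigenvalue formula by a direct local analysis of the degeneracy maps.

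\textbf{Order of $\cC_{M,N}$.} First I would recall the known formulas for the orders of the cuspidal divisors in the prime-power case and in the squarefree case (these appear in the literature of Ligozat, Ogg, and the author's earlier papers). The divisor $C_{M,N}$ is, by construction, built as a ``product'' over the primes dividing $MN^\square$: for each prime $p\mid M$ one takes a factor involving $(P_1)-(P_p)$-type differences, for each $p\mid N^\sqf/M$ a factor weighted differently, and for each $p\mid N^\square$ one takes the difference combination with the weight $\p(N^\square/(d,N^\square))$. The key structural fact is that the cuspidal group $\cC(N)$, or at least the subgroup generated by these ``Dedekind eta'' type divisors, decomposes compatibly with this prime factorization, so that the order of $\cC_{M,N}$ is the least common multiple — equivalently, because of the shape of the formula, the numerator of the displayed product — of the local contributions. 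I would then match: a prime $p\mid M$ contributes a factor of $p-1$; a prime $p\mid N^\sqf/M$ contributes $p^2-1$; a prime $p$ with $p^{e(p)}\| N$, $e(p)\ge 2$, contributes $p^{e(p)-2}(p^2-1)$ — except that to avoid double counting one absorbs the $\prod_{p\mid N^\square}(p^2-1)$ into the second product (over $p\mid N$, $p\nmid M$) and keeps only the $p^{e(p)-2}$ separately; the global factor $h\in\{1,2\}$ and the denominator $24$ come from the usual ``width of the cusp at $\infty$'' normalization and the fact that the relation defining elements of $\cC(N)$ involves $\p((d,N/d))$ whose $2$-part and $3$-part behave irregularly. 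Concretely I would compute the order of $\cC_{M,N}$ as the order of the corresponding element in the component group / using the explicit intersection pairing on cuspidal divisors, or by invoking the generalized index formula for modular units; the bookkeeping of the $2$ and $3$ in $h$ is where I expect the only genuine subtlety.

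\textbf{Hecke action.} For the eigenvalue statement, I would argue prime by prime. Fix a prime $q$ and write $N=N'q^{f}$ with $q\nmid N'$. The Hecke operator $T_q$ is $\b_{q,*}\circ\a_q^*$ (or the $U_q$-type operator when $q\mid N$), and its effect on a cusp $\bect x d$ can be read off from the explicit description of the degeneracy maps $\a_q,\b_q$ on cusps together with Shimura's parametrization recalled in the Background section. The four cases correspond exactly to the four possible behaviors of $q$ relative to $d$ and $N$: if $q\mid M$ the cusps appearing in $C_{M,N}$ are ``$q$-old of the first type'' and the two branches of $T_q$ give back the same divisor, yielding eigenvalue $1$; if $q\mid N^\sqf/M$ the symmetry is the other one and one gets eigenvalue $q$ (this is essentially the Atkin--Lehner flip $T_q\leftrightarrow q\cdot(\text{the other})$ on the relevant eigenspace); if $q^2\mid N$ then one branch of the degeneracy map kills the cuspidal divisor supported at level prime to $q$ for weight reasons, giving eigenvalue $0$ (this is the standard phenomenon that $U_q$ annihilates the new part of the cuspidal group at non-reduced primes, cf. the relation $T_q=\mat{\tau_q}q00$ for $n\ge 2$ in \eqref{equation : Up and Tp}); and if $q\nmid N$ then $T_q$ acts on the whole cuspidal group by $q+1$ since every cusp is fixed, up to Galois, and $\deg$ considerations force the eigenvalue $q+1$. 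In each case the computation is a finite check on how $\a_q^*$ and $\b_{q,*}$ permute the cusps of a given level, using that $(P_d)$ is a Galois-stable sum and that the weights $(-1)^{\o(d)}\p(N^\square/(d,N^\square))$ are designed to make these combinations eigenvectors.

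\textbf{Main obstacle.} I expect the Hecke eigenvalue part to be essentially mechanical once the action of the degeneracy maps on cusps is written down carefully; the real work is in the order computation, and within that the precise determination of the factor $h\in\{1,2\}$ and the justification that the product formula gives the exact order (not merely a multiple) — i.e., that the $q$-old and $q$-new pieces of the cuspidal group glue without unexpected cancellation or extra torsion. I would handle this either by induction on the number of prime factors of $N$, peeling off one prime at a time via the degeneracy maps and controlling the kernel/cokernel, or by citing the explicit modular-units computation; in the write-up I would present the eta-quotient or modular-symbol computation that pins down the order exactly, and relegate the $2$-and-$3$ ambiguity to the stated $h$.
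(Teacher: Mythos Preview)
The paper does not actually prove this proposition here: immediately after the statement it simply records that the squarefree case follows from \cite[Prop.~2.13]{Yoo1} and \cite[Th.~1.3]{Yoo3}, and the non-squarefree case from \cite[Th.~4.2 and 4.7]{Yoo6}. So there is no in-paper argument to compare your proposal against; the proposition is quoted as background and deferred entirely to the author's earlier work.

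That said, your outline is a reasonable sketch of what those cited papers do. The order computation in \cite{Yoo3, Yoo6} is indeed carried out via explicit modular-units/eta-quotient calculations (Ligozat-style), and the Hecke eigenvalues in \cite{Yoo1, Yoo6} are obtained by tracking the degeneracy maps on cusps, essentially as you describe. Two cautions if you were to flesh this out: first, the ``prime-by-prime decomposition'' of the cuspidal group you invoke is not literally a direct-sum decomposition, so the step where you pass from local contributions to the global order requires the full intersection-pairing or modular-unit computation rather than a multiplicativity argument --- your ``Main obstacle'' paragraph correctly identifies this as the crux. Second, your explanation of the eigenvalue $0$ when $q^2\mid N$ is slightly off: the relevant mechanism is not that ``one branch kills the divisor for weight reasons'' but rather that the specific alternating-sign combination defining $C_{M,N}$ is engineered so that $U_q$ sends it to a divisor supported on cusps whose levels are all divisible by $q$, and the coefficients cancel; this is a genuine computation on cusps, not a formal consequence of the matrix identity \eqref{equation : Up and Tp}. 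In short, your plan is pointed in the right direction, but in the context of this paper the correct ``proof'' is simply to cite the references above.
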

If $N$ is squarefree (resp. otherwise), this proposition follows from \cite[Prop.  2.13]{Yoo1} and \cite[Th. 1.3]{Yoo3} (resp. from \cite[Th. 4.2 and 4.7]{Yoo6}).
\ms

Finally, we recall the following lemma on the maximality of $\fm_\ell(s, t, u)$.

\begin{lemma}\label{lemma : maximal ideal assumption}
Let $\ell\geq 5$ be a prime, and let $N=\prod_{i=1}^s p_i \prod_{j=1}^t q_j \prod_{k=1}^u r_k^{e(k)}$ with $e(k)\geq 2$ not divisible by $\ell$.
Then, $\fm:=\fm_\ell(s, t, u)$ is maximal if and only if $s+u\geq 1$ and $s_0+t+u_1 \geq 1$. 
\end{lemma}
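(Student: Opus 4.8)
The plan is to show that $\fm := \fm_\ell(s,t,u)$ is maximal precisely when the ideal it generates is proper, i.e. when $1 \notin \fm$. By definition $\fm$ is the ideal of $\T(N)$ generated by $\ell$, by $T_{p_i}-1$, $T_{q_j}+1$, $T_{r_k}$ for all $i,j,k$, and by $\cI_0(N)$. Since $\T(N)/\ell\T(N)$ is a finite (hence Artinian) $\F_\ell$-algebra, every proper ideal containing $\ell$ is contained in a maximal ideal, and $\fm$ itself is maximal if and only if it is proper; so the whole question reduces to deciding when $\fm \neq \T(N)$. The natural tool is the rational cuspidal subgroup: by Proposition \ref{proposition : cuspidal element}, the Hecke action on the cuspidal divisors $\cC_{M,N}$ is explicit, and the existence of a nonzero element of $\cC(N)$ annihilated by $\fm$ (equivalently, of $\ell$-torsion with the right eigenvalues) forces $\fm$ to be proper, hence maximal.

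First I would treat the ``if'' direction. Assume $s+u \geq 1$ and $s_0+t+u_1 \geq 1$. I want to exhibit a nonzero element of $J_0(N)[\fm]$, or at least show $\fm \neq \T(N)$. Take a cuspidal divisor $\cC_{M,N}$ with $M = \prod_{i=1}^{s} p_i$ (so that $M$ collects exactly the ``$p$'' primes, which should carry $T_p$-eigenvalue $1$), while the $r_k$ contribute via $N^\square$ with eigenvalue $0$ and the $q_j$ via $N^\sqf/M$ with eigenvalue $q_j$. By Proposition \ref{proposition : cuspidal element} this divisor has $T_{p_i}$ acting by $1$, $T_{r_k}$ acting by $0$, and $T_{q_j}$ acting by $q_j \equiv -1 \pmod\ell$ (using Assumption \ref{assumption : m=m(s, t, u)}), and $T_p$ acting by $p+1$ for $p \nmid N$; so the image of $\cC_{M,N}$ in $J_0(N)[\ell]$ is killed by every generator of $\fm$. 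It remains to check this image is nonzero in $J_0(N)/\ell$, i.e. that $\ell$ divides the order of $\cC_{M,N}$: by the order formula in Proposition \ref{proposition : cuspidal element}, $\ell$ divides the numerator of
\begin{equation*}
\frac{h\prod_{p \mid M}(p-1)\prod_{p\mid N, p\nmid M}(p^2-1)\prod_{p\mid N^\square} p^{e(p)-2}}{24}
\end{equation*}
as soon as $\ell \mid p_i - 1$ for some $i \le s_0$, or $\ell \mid q_j^2 - 1$ for some $j$ (true since $q_j \equiv -1$), or $\ell \mid r_k^2 - 1$ for some $k \le u_1$ — and the hypothesis $s_0 + t + u_1 \geq 1$ guarantees at least one such factor occurs; since $\ell \geq 5$ is coprime to $24$ and to $N$, it survives to the numerator. (The condition $s+u\geq 1$, i.e. $N \neq 1$, is needed merely so that such a cuspidal divisor exists, equivalently $MN^\square \neq 1$.) Hence $\fm$ is proper, hence maximal.

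For the ``only if'' direction I would argue contrapositively: if $s+u = 0$ then $N = \prod q_j$ is squarefree (or $N=1$), a case already excluded or falling under known squarefree results, so assume $s + u \geq 1$ but $s_0 + t + u_1 = 0$, meaning every $p_i \not\equiv 1$, every $r_k \not\equiv \pm 1 \pmod\ell$, and $t = 0$. I then want to show $\fm = \T(N)$, i.e. produce a relation expressing $1$ in terms of $\ell$, the $T_{p_i}-1$, the $T_{r_k}$, and $\cI_0(N)$. The cleanest route is to localize: $\T(N)/\fm\T(N)$ is a finite $\F_\ell$-algebra, and I would show it vanishes by checking that no system of Hecke eigenvalues over $\overline{\F}_\ell$ can simultaneously satisfy $T_{p_i} \equiv 1$, $T_{r_k}\equiv 0$, and $T_p \equiv p+1$ for $p \nmid N$. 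The Eisenstein condition $T_p \equiv p+1$ pins down the associated semisimple representation as $\mathbf{1}\oplus\chi_\ell$, and then the local conditions at $p_i$ and $r_k$ — coming from the known description of $T_p$ on $p$-old/$p$-new parts, cf. \eqref{equation : Up and Tp} and \cite[Th. 1.2]{Yoo2} / \cite[Lemma 2.2]{Yoo6} — force $T_{p_i} \in \{1, p_i\}$ on any Eisenstein component and $T_{r_k} \in \{0,1,r_k\}$, with the value $1$ for $p_i$ (resp. $0$ or the "good" value for $r_k$) only achievable when $p_i \equiv 1$ (resp. $r_k \equiv \pm 1$) modulo $\ell$. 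Under $s_0+t+u_1 = 0$ these are incompatible, so the localized ring is zero and $\fm = \T(N)$.

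\textbf{Main obstacle.} The delicate step is the ``only if'' direction: one must rule out \emph{all} possible Eisenstein (and a priori also non-rational-Eisenstein) maximal ideals with the prescribed eigenvalues, which requires a clean classification of the local behaviour of $T_{p_i}$ and $T_{r_k}$ at the bad primes on Eisenstein components. This is exactly where one invokes the structural results of \cite{Yoo2, Yoo6} (the $p$-new/$p$-old dichotomy and the explicit form of $\fm$ for new rational Eisenstein primes), and the combinatorics of matching the congruence conditions $p_i \equiv 1$, $q_j \equiv -1$, $r_k \equiv \pm 1 \pmod \ell$ against which eigenvalue choices are realizable. The ``if'' direction, by contrast, is essentially a bookkeeping exercise with Proposition \ref{proposition : cuspidal element}.
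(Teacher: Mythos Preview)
Your ``if'' direction is correct and essentially the same as the paper's: exhibit the cuspidal class $\cC_{M,N}$ with $M=\prod p_i$, check the Hecke eigenvalues via Proposition~\ref{proposition : cuspidal element}, and verify $\ell$ divides its order. Your observation that $\fm$ is maximal iff proper (since every $T_p$ is congruent to a fixed element of $\F_\ell$ modulo $\fm$, so $\T(N)/\fm$ is a quotient of $\F_\ell$) is also fine.

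The gap is in your ``only if'' direction. You propose to rule out all systems of Hecke eigenvalues compatible with $\fm$ by analysing the local constraints at bad primes, but you do not actually carry this out, and the claimed dichotomies (e.g.\ ``$T_{p_i}\equiv 1$ only achievable when $p_i\equiv 1$'') are not justified; making them precise would amount to reproving a large part of the classification in \cite{Yoo2,Yoo6}. The paper avoids this entirely. Its key move is to replace $T_{q_j}+1$ by $T_{q_j}-q_j$ (equal modulo $\ell$ since $q_j\equiv -1$), so that $\fm=(\ell,I)$ with
\[
I=(T_{p_i}-1,\,T_{q_j}-q_j,\,T_{r_k},\,\cI_0(N)).
\]
The index of $I$ is computed exactly in \cite[Th.~4.2 and 6.1]{Yoo6}: $(\T(N)/I)\otimes\Z_\ell \simeq (\Z/n\Z)\otimes\Z_\ell$, where $n$ is the numerator of $\tfrac{1}{24}\prod(p_i-1)\prod(q_j^2-1)\prod r_k^{e(k)-2}(r_k^2-1)$. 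Thus $\fm$ is maximal iff $\ell\mid n$, which (since $\ell\geq 5$ and $\ell\nmid N$) is precisely $s_0+t+u_1\geq 1$. This single computation yields both directions at once, so no eigenvalue-system analysis is needed. The residual case $s+u=0$ (squarefree $N$ with only $q_j$'s) is dispatched by citing \cite[Th.~1.2(1)]{Yoo2}.
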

\begin{proof}
Let $u=0$. Then, $N$ is squarefree and $\fm$ cannot be maximal if $s=0$ by \cite[Th. 1.2(1)]{Yoo2}. Therefore $s+u \geq 1$ if $\fm$ is maximal. 

Now, assume that $s+u \geq 1$.
Let $I:=(T_{p_i}-1, T_{q_j}-{q_j}, T_{r_k}, \cI_0(N) : \text{ for all } i, j \text{ and } k)$ be an Eisenstein ideal of $\T(N)$. Then by \cite[Th. 4.2 and 6.1]{Yoo6} we get
\begin{equation*}
(\T(N)/I )\otimes_\Z \Z_\ell \simeq (\zmod n)\otimes_\Z \Z_\ell
\end{equation*}
where $n$ is the numerator of 
\begin{equation*}
\frac{\prod_{i=1}^s (p_i-1) \prod_{j=1}^t (q_j^2-1) \prod_{k=1}^u r_k^{e(k)-2}(r_k^2-1)}{24}.
\end{equation*}
By our assumption, $q_j \equiv -1 \pmod \ell$ for all $j$ and hence $\fm= (\ell, I)$. Therefore, $\fm$ is maximal if and only if $n$ is divisible by $\ell$. Since $\ell \geq 5$, the latter statement is equivalent to saying that $s_0+t+u_1 \geq 1$, which implies the claim.
\end{proof}

\begin{remark}\label{remark : maximal ideal}
It is also true that for $M=\prod_{i=1}^s p_i$,
\begin{equation*}
\fm:=\fm_\ell(s, t, u) \text{ is maximal } \iff MN^\square \neq 1 \text{ and } \< \cC_{M, N} \>[\fm] \neq 0.
\end{equation*}
\end{remark}
\ms

\section{The structure of $J_0(N)[\fm]$} \label{section : the structure of the kernel}
Let $\fm$ be a rational Eisenstein prime of $\T(N)$.
The structure of $J_0(N)[\fm]$ is carefully studied by Mazur when $N$ is a prime \cite{M77}; by Ribet and the author when $N$ is squarefree and $\ell$ does not divide $6N$ \cite{RY, Yoo1}. Most of the ideas can be generalized to arbitrary composite level without difficulties if we assume that $\ell$ does not divide $2N$. For the convenience of readers, we try to provide enough details.
\ms

From now on, we fix a prime $\ell \geq 3$ and assume that $N$ is a positive integer prime to $\ell$.
For ease of notation, we set $\T:=\T(N)$ and $J:=J_0(N)$. 
Let $\fm$ denote a rational Eisenstein prime of $\T$ containing $\ell$. (Note that since all Hecke operators are congruent to an integer modulo $\fm$ (cf. \cite[Lemma 2.2]{Yoo6}), $k_\fm:=\T/\fm \simeq \F_\ell$.) Let $S=\Spec~\Z$ and $S':=\Spec ~\Z[1/N]$.
\ms

Let $\cJ_{\Z}$ denote the N\'eron model of $J$ over $S$. The description of the special fiber can be given by the theory of Raynaud \cite{Ra70} using the Deligne-Rapoport \cite{DR73} (resp. Katz-Mazur \cite{KM85}) model of $X_0(N)_{\F_p}$ when $p$ exactly divides $N$ (resp. $p^2$ divides $N$). If $p$ does not divide $N$, $\cJ_{\Z_p}$ is an abelian variety by Igusa \cite{Ig59}. Note that $\cJ_{\Z_p}$ is a semi-abelian variety if $p$ exactly divides $N$ 
and it is not if $p^2$ divides $N$.
Let $J[\ell]_{S}$ denote the scheme-theoretic kernel of multiplication by $\ell$ in $\cJ_{\Z}$. Then, $J[\ell]_{S}$ is a quasi-finite flat group scheme, whose restriction to $S'$ is finite and flat, because $\ell^2$ does not divide $N$ (cf. \cite[\ts 7.3, Lemma 2]{BLR90}). 

Now, we define $J[\fm]_{S}$ to be the Zariski-closure of $J[\fm]$ in $\cJ_{\Z}$.
It is the subgroup scheme extension of $J[\fm]$ in $J[\ell]_{S}$ in the sense of \cite[Ch. I, \ts 1(c)]{M77}.
(Note that $J[\fm] \subset J[\ell]_{S}(\ov{\Q})$ because $\ell \in \fm$.)
Therefore $J[\fm]_{S}$ is a quasi-finite flat group scheme, whose restriction to $S'$ is finite and flat. It is
a closed subgroup scheme of $\cJ_{\Z}$ by construction, and killed by $\fm$. The quotient $\T/\fm$ acts naturally on $J[\fm]_{S}$. We often abuse notation and write $J[\fm]$ for this quasi-finite flat group scheme $J[\fm]_{S}$ if there is no confusion. For related discussions, see \cite[Ch. I, \ts 1]{M77}, \cite[\ts 1]{M78}, \cite[\ts 3.1]{BK14} or \cite{Con03}.

\subsection{Mazur's argument} \label{section : Mazur's argument}
The main result in this subsection is the following:
\begin{proposition}\label{proposition : mazur argument}
There is an exact sequence:
\begin{equation*}
\xymatrix{
0 \ar[r] & \zmod \ell \ar[r] & J[\fm] \ar[r] & \mu_\ell^{\oplus n} \ar[r] & 0
}
\end{equation*}
with $n\geq 1$.
\end{proposition}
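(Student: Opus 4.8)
The plan is to follow the strategy Mazur used for prime level \cite[Ch.~II]{M77}, adapting each step to composite $N$. Regard $J[\fm]=J[\fm]_S$ as a quasi-finite flat group scheme over $S=\Spec\Z$, finite flat over $S'=\Spec\Z[1/N]$ because $\ell^{2}\nmid N$; by Brauer--Nesbitt and $\rho_\fm\simeq\mathbf 1\oplus\chi_\ell$, every Jordan--H\"older constituent of $J[\fm]$ is isomorphic to $\Z/\ell\Z$ or to $\mu_\ell$. First I would exhibit the submodule $\Z/\ell\Z$: since $\fm$ is rational Eisenstein it contains $(\ell,\cI_0(N))$, and writing $M:=\prod_{p\,\|\,N,\ \e(p)=1}p$, Proposition~\ref{proposition : cuspidal element} shows that $T_p$ acts on the $\Q$-rational cuspidal class $\cC_{M,N}$ by $1$, $p$, or $0$ according as $p\mid M$, $p\mid N^\sqf/M$, or $p^{2}\mid N$; since $\e(p)=-1$ forces $p\equiv-1\pmod\ell$, every generator of $\fm$ annihilates $\langle\cC_{M,N}\rangle[\ell]$, and by Remark~\ref{remark : maximal ideal} the maximality of $\fm$ gives $\langle\cC_{M,N}\rangle[\ell]\neq0$. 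Being generated by a $\Q$-rational point, this yields a $G_\Q$-equivariant closed immersion $\Z/\ell\Z\hookrightarrow J[\fm]$ over $\Z$, the source carrying the trivial action; set $Q:=J[\fm]/(\Z/\ell\Z)$. It remains to prove $Q\simeq\mu_\ell^{\oplus n}$ with $n\geq1$.

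The heart of the matter is the local picture at $\ell$. As $\ell\nmid N$, $J$ has good reduction at $\ell$, so $J[\fm]_{/\Z_\ell}$ is finite flat; and since $T_\ell-\ell-1\in\cI_0(N)\subseteq\fm$ we have $T_\ell\equiv1\pmod\fm$, so $\fm$ is $\ell$-ordinary and the connected--\'etale sequence $0\to J[\fm]^{0}\to J[\fm]_{/\Z_\ell}\to J[\fm]^{\mathrm{et}}\to0$ separates constituents: $J[\fm]^{0}$ is of multiplicative type (constituents $\mu_\ell$) and $J[\fm]^{\mathrm{et}}$ is \'etale with geometric Frobenius acting by the $\ell$-adic unit root of $\fm$, which reduces to $1$, hence is constant, $\simeq(\Z/\ell\Z)^{a}$; moreover the cuspidal $\Z/\ell\Z$, being \'etale over $\Z_\ell$, maps isomorphically onto a line of $J[\fm]^{\mathrm{et}}$. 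I would then establish two facts: (i) $a=1$, so that the cuspidal line is all of $J[\fm]^{\mathrm{et}}$ and $Q_{/\Z_\ell}\simeq J[\fm]^{0}$ is of multiplicative type; and (ii) for every prime $p\mid N$, inertia $I_p$ acts trivially on $Q$, so $Q$ is unramified outside $\ell$. Granting (i)--(ii), $Q$ glues to a finite flat group scheme of multiplicative type over all of $\Spec\Z$; its Cartier dual is then finite \'etale over $\Spec\Z$ and killed by $\ell$, hence constant because $\Spec\Z$ is simply connected (Minkowski), so $Q^{\vee}\simeq(\Z/\ell\Z)^{n}$ and $Q\simeq\mu_\ell^{\oplus n}$. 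Finally $n\geq1$, for otherwise every constituent of $J[\fm]$ would be trivial, contradicting the occurrence of $\chi_\ell$ in $\rho_\fm$.

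The step I expect to be the main obstacle is exactly (i)--(ii): showing that the cuspidal line absorbs the whole \'etale-at-$\ell$ part and the whole ramified-at-$p$ part of $J[\fm]$. For (i) --- a ``multiplicity one for the trivial part'' statement, which in the absence of the Gorenstein property of $\T_\fm$ is not formal --- I would bound $J(\Q)[\fm]$: choosing an auxiliary prime $p\nmid N\ell$ with $p\not\equiv1\pmod\ell$, the Eichler--Shimura congruence $\Frob_p^{2}-T_p\Frob_p+p=0$ together with $T_p\equiv p+1\pmod\fm$ gives $J[\fm]=\ker(\Frob_p-1)\oplus\ker(\Frob_p-p)$, whence $J(\Q)[\fm]\subseteq J(\Q_p)[\fm]=\ker(\Frob_p-1)$; comparing this bound (over varying $p$) with the known order of the rational cuspidal subgroup forces $J(\Q)[\fm]$ to be the cuspidal line, hence $a=1$. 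For (ii) one must analyse $J[\fm]_{/\Z_p}$ through the Deligne--Rapoport model of $X_0(N)_{\F_p}$ when $p$ exactly divides $N$ and the Katz--Mazur model when $p^{2}\mid N$, verifying that the specialization of the cuspidal point generates the part of the special fibre on which inertia acts nontrivially --- when $p$ exactly divides $N$, that it generates the component group of $\cJ_{\F_p}$. The non-semistable case $p^{2}\mid N$ is where the argument genuinely goes beyond the squarefree situation, and where the hypothesis $\ell\nmid 2N$ intervenes.
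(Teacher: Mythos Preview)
Your step~(ii) is a red herring: the paper never proves that $Q$ is unramified at primes $p\mid N$ en route to $Q\simeq\mu_\ell^{\oplus n}$; this is a \emph{consequence}, not an input. Once step~(i) gives $Q^{\mathrm{ss}}\simeq\mu_\ell^{\oplus n}$, so that $Q$ is of multiplicative type, one applies Eichler--Shimura only at primes $p\nmid N\ell$: from $(\Frob_p-1)(\Frob_p-p)=0$ and the fact that the unique eigenvalue of $\Frob_p$ on a multiplicative-type module is $p$, one gets $(\Frob_p-p)Q=0$ whenever $p\not\equiv1\pmod\ell$. Dualizing, $\Frob_p$ fixes $Q^\vee$ for all such $p$; since the Galois image on $Q^\vee$ is an $\ell$-group (unipotent upper-triangular) and hence linearly disjoint from $\Gal(\Q(\mu_\ell)/\Q)$, Chebotarev lets such $\Frob_p$ hit every element, so $Q^\vee$ is constant and $Q\simeq\mu_\ell^{\oplus n}$. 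No local analysis at $p\mid N$---in particular no Katz--Mazur model---is needed; what you flag as ``the main obstacle'' simply does not arise.

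Your plan for step~(i) also has gaps. The decomposition $J[\fm]=\ker(\Frob_p-1)\oplus\ker(\Frob_p-p)$ yields only $\dim J(\Q)[\fm]\le a$, the wrong inequality, and ``comparing with the known order of the rational cuspidal subgroup'' to force $\dim J(\Q)[\fm]=1$ is tantamount to the generalized Ogg conjecture, which is not available. Even granting $\dim J(\Q)[\fm]=1$, this need not give $a=1$: a $\zmod\ell$ constituent lying above a $\mu_\ell$ in the filtration contributes to $a$ without contributing a $\Q$-rational point. The correct mechanism is the $q$-expansion principle: over $\F_\ell$, the \'etale part $J[\fm]^{\text{\'et}}_{\F_\ell}$ injects via Serre's map $\d$ into $H^0(X_0(N)_{\ov\F_\ell},\O^1)[\fm]$, and the latter has dimension $\le1$ because the Hecke eigensystem modulo $\fm$ determines the $q$-expansion. (Finally, $n\ge1$ is not because ``$\chi_\ell$ occurs in $\rho_\fm$''---$\rho_\fm$ is built from Hecke eigenvalues, not carved out of $J[\fm]$---but because $\dim J[\fm]\ge2$, which follows from $\Ta_\fm(J)\otimes\Q$ being free of rank~$2$ over $\T_\fm\otimes\Q$.)
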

The following is an easy consequence of the proposition:
\begin{corollary} \label{corollary : unipotent nature}
Let $p$ be a prime different from $\ell$ and let $I_p$ be an inertia subgroup of $G_\Q$ for $p$. Then, for any $\s_p \in I_p$, $(\s_p-1)^2$ annihilates $J[\fm]$.
\end{corollary}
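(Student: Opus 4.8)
The plan is to deduce this immediately from Proposition \ref{proposition : mazur argument}. The point is that the exact sequence $0 \to \zmod \ell \to J[\fm] \to \mu_\ell^{\oplus n} \to 0$ gives, over an algebraic closure of $\Q$, a $G_\Q$-stable filtration of $J[\fm]$ whose subquotients are $\zmod \ell$ (with trivial Galois action) and $\mu_\ell^{\oplus n}$. Now fix a prime $p \neq \ell$ and an inertia subgroup $I_p \subset G_\Q$. Both $\zmod \ell$ and $\mu_\ell$ are \emph{unramified} at $p$: indeed $\mu_\ell$ is unramified away from $\ell$, and the trivial module obviously is. Hence $I_p$ acts trivially on both the sub $\zmod \ell$ and the quotient $\mu_\ell^{\oplus n}$ of $J[\fm]$.

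Concretely, first I would pick $\s_p \in I_p$ and consider $\s_p - 1$ as an endomorphism of the $\F_\ell$-vector space $J[\fm](\ov\Q)$. Since $I_p$ acts trivially on the quotient $\mu_\ell^{\oplus n}$, the image $(\s_p - 1)(J[\fm])$ lands in the sub $\zmod \ell$; and since $I_p$ acts trivially on that sub, applying $\s_p - 1$ a second time kills everything. That is, $(\s_p - 1)^2 = 0$ on $J[\fm]$, which is the claim. One should note that this argument uses only that the two graded pieces of the two-step filtration are unramified at $p$, together with $p \neq \ell$ (so that the filtration is available and $\mu_\ell$ is unramified there).

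I do not anticipate any real obstacle here; the only thing to be slightly careful about is that the exact sequence of Proposition \ref{proposition : mazur argument} is a sequence of group schemes over $S = \Spec\,\Z$, so to read off the Galois action on geometric points at $p$ one restricts to $\Spec\,\Z[1/\ell]$, where all three terms are finite flat and the sequence is exact on $\ov\Q$-points, and then observes that $J[\fm]_{\Z[1/\ell]}$ sits inside $J[\ell]_{\Z[1/\ell]}$. Everything in sight is killed by $\ell$, so working with $\F_\ell$-modules with $G_\Q$-action is harmless. Thus the corollary follows at once from the proposition.
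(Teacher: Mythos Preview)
Your proof is correct and follows essentially the same idea as the paper: both arguments deduce the corollary directly from Proposition~\ref{proposition : mazur argument} by observing that $J[\fm]$ has a two-step $G_\Q$-stable filtration whose graded pieces are unramified at $p\neq\ell$, so $(\sigma_p-1)$ is nilpotent of order $\leq 2$. The only cosmetic difference is that the paper uses the filtration $0\subset J[\fm]^{I_p}\subset J[\fm]$ (noting that $\Z/\ell\Z\subset J[\fm]^{I_p}$, so the quotient is a power of $\mu_\ell$ and hence unramified), whereas you use the filtration $0\subset\Z/\ell\Z\subset J[\fm]$ coming straight from the proposition; your version is arguably more direct.
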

\begin{proof}
By definition, $(\s_p-1)$ annihilates $J[\fm]^{I_p}$, the fixed part of $J[\fm]$ by $I_p$. Then, by Proposition \ref{proposition : mazur argument} the quotient $Q=J[\fm]/{J[\fm]^{I_p}}$ is isomorphic to $\mu_\ell^{\oplus a}$ for some $0\leq a\leq 1$, which is unramified at $p$. Therefore $(\s_p-1)$ also annihilates $Q$ which implies the claim.
\end{proof}

The proof of Proposition \ref{proposition : mazur argument} will directly follow from the discussion below.
\begin{lemma} \label{lemma : dim at least 2}
The dimension of $J[\fm]$ over $k_\fm$ is at least $2$.
\end{lemma}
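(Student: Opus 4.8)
\textbf{Proof plan for Lemma \ref{lemma : dim at least 2}.}

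The plan is to exhibit two explicit linearly independent elements in $J[\fm]$ over $k_\fm \simeq \F_\ell$: one coming from a nonzero $\ell$-torsion element of the rational cuspidal group killed by $\fm$, and one coming from the fact that $J[\fm]$ always contains a sub isomorphic to $\mu_\ell$. First I would invoke Remark \ref{remark : maximal ideal}: since $\fm = \fm_\ell(s,t,u)$ is maximal, with $M = \prod_{i=1}^s p_i$ we have $MN^\square \neq 1$ and $\br{\cC_{M,N}}[\fm] \neq 0$. By Proposition \ref{proposition : cuspidal element}, $T_q$ acts on $\cC_{M,N}$ by $1$ (if $q\mid M$), by $q$ (if $q \mid N^\sqf/M$), by $0$ (if $q^2\mid N$), and by $q+1$ (if $q\nmid N$); comparing with the generators in \eqref{equation : m(s, t, u)} one checks that the $\F_\ell$-line cut out of $\br{\cC_{M,N}}$ by $\fm$ is precisely $\fm$-torsion, hence gives a nonzero element $c \in J[\fm]$. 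This element lies in $J(\Q)$, so as a $G_\Q$-module the subgroup it generates is the constant group scheme $\zmod\ell$; in particular $c$ generates an étale sub of $J[\fm]$ on which $G_\Q$ acts trivially.

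Next I would produce a second, independent element by a duality/Weil-pairing argument, exactly as in Mazur \cite[Ch.~II]{M77}. The autoduality of $J = J_0(N)$ and the Weil pairing identify $J[\ell]$ with its own Cartier dual in a Hecke-equivariant way (the Hecke operators $T_n$ are self-addual up to the Atkin--Lehner/transpose normalization, and $\fm$ being generated by elements of the form $T_p - a_p$ with $a_p \in \Z$ is stable under this involution). Consequently $J[\fm]$ and $\Hom(J[\fm], \mu_\ell)$ are isomorphic as $G_\Q$-modules. Since $J[\fm]$ contains the trivial submodule $\zmod\ell \cdot c$, its Cartier dual $J[\fm]$ contains the quotient's dual, namely a copy of $\mu_\ell$. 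Because $\ell \geq 3$, the group scheme $\mu_\ell$ is not isomorphic to $\zmod\ell$ (they have distinct $G_\Q$-action: $\mu_\ell$ is ramified at $\ell$ and nontrivial, $\zmod\ell$ is trivial), so the $\mu_\ell$ and the $\zmod\ell$ inside $J[\fm]$ are distinct simple submodules. Hence $\dim_{k_\fm} J[\fm] \geq 2$.

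The main obstacle is making the self-duality bookkeeping precise: one must be careful that the Hecke operator $T_p$ acting on $J_0(N)$ is identified under autoduality not with itself but with a conjugate (by the Atkin--Lehner involution $w_N$, or equivalently by the main involution on Hecke algebras), and verify that $\fm$ is nonetheless fixed by this involution. This holds because $\fm$ is a \emph{rational} Eisenstein prime: it contains $\cI_0(N)$ and the additional generators $T_p - \e(p)$ with $\e(p) \in \{0, \pm 1\}$, and these are visibly fixed by the duality involution (the relevant character and its inverse-cyclotomic-twist partner both reduce to the rational data defining $\fm$). Once this is in place, the argument that $J[\fm]$ and its Cartier dual are $G_\Q$-isomorphic goes through, and the presence of the two non-isomorphic simple pieces $\zmod\ell$ and $\mu_\ell$ gives the bound. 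An alternative, avoiding duality, is to note that $J[\fm] \neq 0$ forces (via the connected-étale sequence over $\Z_\ell$ and the fact that $J$ has good reduction at $\ell$, since $\ell \nmid N$) the existence of both a nontrivial connected part and a nontrivial étale part, again yielding dimension at least $2$; I would present whichever of these two routes is shorter to make rigorous with the references already cited.
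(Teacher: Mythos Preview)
Your approach is genuinely different from the paper's, and it has a real gap in the duality step.

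The paper argues via the $\fm$-adic Tate module: one knows (following Mazur \cite[Ch.~II, Lemma~7.7]{M77}) that $\Ta_\fm(J)\otimes_\Z\Q$ is free of rank~$2$ over $\T_\fm\otimes_\Z\Q$; if $\dim_{k_\fm} J[\fm]=1$ then Nakayama forces $\Ta_\fm(J)$ to be cyclic over $\T_\fm$, so $\Ta_\fm(J)\otimes\Q$ would have rank~$1$, a contradiction. This argument uses nothing about $\fm$ being Eisenstein and works for every maximal ideal of $\T$.

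Your first step---producing $\zmod\ell\hookrightarrow J[\fm]$ from the cuspidal group---is fine and is in fact how the paper later proves Lemma~\ref{lemma : cuspidal in J[m]}. The problem is the second step. The Weil pairing, even after twisting by $w_N$ to make Hecke operators self-adjoint and after checking $\fm=\fm^*$, does \emph{not} give $J[\fm]\simeq J[\fm]^\vee$. What it gives is a perfect $G_\Q$-equivariant pairing
\[
J[\fm]\times \bigl(J[\ell]/\fm J[\ell]\bigr)\longrightarrow\mu_\ell,
\]
i.e.\ $J[\fm]^\vee\simeq J[\ell]/\fm J[\ell]$, and these two modules are not isomorphic in general. (Indeed, by Proposition~\ref{proposition : Ling-Oesterle} and Corollary~\ref{corollary : structure of cQ[m]}, when $N$ is non-squarefree one has $\mu_\ell\not\subset J[\fm]$, so $J[\fm]$ is certainly not self-Cartier-dual in those cases.) If you try to push through with the correct duality $J[\fm]^\vee\simeq J[\ell]/\fm J[\ell]$, you find that $\dim J[\fm]=1$ forces $J[\ell]_\fm$ to be cyclic over $(\T/\ell)_\fm$, hence $\Ta_\fm(J)$ cyclic over $\T_\fm$ by Nakayama---and you are back to the paper's argument.

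Your alternative via the connected--\'etale sequence has the same defect: you know the \'etale part is nonzero (from $c$), but nothing you have written forces the connected part to be nonzero. That requires an independent input, and the cleanest one is again the rank-$2$ statement for the Tate module.
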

\begin{proof}
We apply the arguments in \cite[Ch. II, \ts\ts 6, 7]{M77}.

For an ideal $\fa$ of $\T$, let $J[\fa]:=\{ x \in J(\ov{\Q}) : Tx=0 \text{ for all } T \in \fa \}$.
Let 
\begin{equation*}
\T_\fm:=\plim k \T/{\fm^k} \qa J_\fm :=\ilim k J[\fm^k].
\end{equation*}
Then, $\T_\fm$ is a direct factor of the semi-local ring $\T_\ell:=\T \otimes \Z_\ell$
and $J_\fm \simeq J_\ell \otimes_{\T_\ell} \T_\fm$, where $J_\ell:=\ilims k J[\ell^k]$, the $\ell$-divisible group associated to $J$.
Let 
\begin{equation*}
\Ta_{\fm} (J):=\Hom (\Q_\ell/\Z_\ell, ~J_\fm)=\Hom (\Q_\ell/\Z_\ell, ~\ilim k J[\fm^k])
\end{equation*}
be the $\fm$-adic Tate module of $J$, which is isomorphic to $\Ta_\ell(J) \otimes_{\T_\ell} \T_\fm$.
Then by Lemma 7.7 of \textit{loc. cit.}
 $\Ta_{\fm} (J) \otimes_\Z \Q$ is of dimension 2 over $\T_\fm \otimes_\Z \Q$. Note that the argument used in the proof is not dependent on the fact that the level $N$ is a prime (cf. \cite[Prop.  3.1]{Wi80}, \cite[p. 481]{Wi95} or \cite[Lemma 1.39]{DDT}).
 
Since $\fm$ is maximal and $\T$ acts faithfully on $J$, $J[\fm] \neq 0$.
Suppose that $\dim_{k_\fm} J[\fm] =1$. Then by Nakayama's lemma $\Ta_{\fm}(J)$ is free of rank $1$ over $\T_\fm$. Thus, $\Ta_{\fm} (J)  \otimes_\Z \Q$ is of dimension $1$ over $\T_\fm \otimes_\Z \Q$, which is a contradiction. Therefore $\dim_{k_\fm} J[\fm] \geq 2$ as desired.
\end{proof}
\begin{remark}
The proof above is valid without any hypothesis on $\ell$ and $\fm$. Thus, Lemma \ref{lemma : dim at least 2} holds for any maximal ideal $\fm$ of $\T$.
\end{remark}

The following is a generalization of \cite[Ch. II, Prop. 14.1]{M77}:
\begin{proposition}\label{proposition : jordan-holder factors}
All Jordan-H\"older constituents of $J[\fm]$ is either $\zmod \ell$ or $\mu_\ell$. 
\end{proposition}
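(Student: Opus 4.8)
The plan is to analyze the Galois action on the Jordan–Hölder constituents of $J[\fm]$ using the known structure of $\rho_\fm$ together with the integral structure of $J[\fm]$ as a quasi-finite flat group scheme over $S$. Since $\fm$ is rational Eisenstein, $\rho_\fm \simeq \mathbf{1}\oplus\chi_\ell$, so every Jordan–Hölder constituent of $J[\fm]$ as a $G_\Q$-module is a one-dimensional $\F_\ell$-vector space on which $G_\Q$ acts either trivially or through $\chi_\ell$; this already follows from the Brauer–Nesbitt theorem applied to the semisimplification $J[\fm]^{\mathrm{ss}}$, which is a sum of copies of $\mathbf 1$ and $\chi_\ell$ (the multiplicities matching those in $\rho_\fm^{\oplus r}$ for the appropriate $r$). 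So each constituent is, as an $\F_\ell[G_\Q]$-module, isomorphic to either $\zmod\ell$ or the Galois module $\mu_\ell(\ov\Q)$. The content of the proposition is the finer statement that the constituents are isomorphic to $\zmod\ell$ or $\mu_\ell$ \emph{as finite flat group schemes over} $S=\Spec\Z$, i.e. that the integral extension (Zariski closure) of each such constituent in $\cJ_\Z$ has good/prescribed reduction at $\ell$.

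First I would reduce to a statement about the restriction $J[\fm]_{S'}$ over $S'=\Spec\Z[1/N]$ together with the behavior at $\ell$. Over $S'$ the scheme $J[\fm]_{S'}$ is finite flat, and at $\ell$ (which is prime to $N$, so $\cJ_{\Z_\ell}$ is an abelian variety with good reduction) the group scheme $J[\fm]_{\Z_\ell}$ is finite flat as well; so $J[\fm]_S$ is finite flat over all of $S$ except possibly at the primes dividing $N$, where it is only quasi-finite flat. The key local input is that a one-dimensional Jordan–Hölder constituent $C$ of the $G_\Q$-module $J[\fm]$ on which $G_\Q$ acts via $\mathbf 1$ or $\chi_\ell$, when regarded as a subquotient of the finite flat $\Z_\ell$-group scheme $J[\fm]_{\Z_\ell}$, must itself extend to a finite flat $\Z_\ell$-group scheme; by the Oort–Tate classification of finite flat group schemes of order $\ell$ over $\Z_\ell$ (which applies since $\ell$ is unramified in $\Z_\ell$ and $\ell\geq 3$), the only such schemes with generic Galois action trivial or cyclotomic are $\zmod\ell$ and $\mu_\ell$ respectively. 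This pins down the constituent globally, because a finite flat group scheme of order $\ell$ over $S'$ whose $\ell$-adic completion is $\zmod\ell$ (resp. $\mu_\ell$) and whose generic fiber is the constant (resp. cyclotomic) Galois module is itself $\zmod\ell$ (resp. $\mu_\ell$) over $S$: it glues from its restriction to $S'$ and its restriction to $\Spec\Z_\ell$.

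The step I expect to be the main obstacle is justifying that the Jordan–Hölder filtration of $J[\fm]$ as an abstract $\F_\ell[G_\Q]$-module can be promoted to a filtration by \emph{flat} closed subgroup schemes over $\Z_\ell$ (equivalently, that each graded piece, a priori only a quasi-finite flat group scheme, is in fact finite flat at $\ell$ and hence covered by Oort–Tate), and then to flat closed subgroup schemes over $S'$ — i.e. showing the filtration is compatible with the scheme-theoretic closure operation. I would handle this exactly as in Mazur's treatment (\cite[Ch.~II, \ts14]{M77}), taking scheme-theoretic closures of the successive pieces inside $\cJ_\Z$ and using that, over $S'$ and over $\Spec\Z_\ell$ (the base being Dedekind and the ambient group scheme finite flat there), closure is an exact operation on the lattice of closed flat subgroup schemes, so that the associated graded of the closure-filtration is a filtration of $J[\fm]_{S'}$ (resp. $J[\fm]_{\Z_\ell}$) with each graded piece finite flat of order $\ell$; this lets Oort–Tate identify each piece. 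The remaining verifications — that the generic Galois action on each piece is $\mathbf 1$ or $\chi_\ell$, and that the local identifications at $\ell$ and the generic identification glue to a global identification over $S$ — are then routine.
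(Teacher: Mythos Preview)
You have misread what the proposition is asserting. In this paper $J[\fm]$ denotes the $\F_\ell[G_\Q]$-module (the group scheme is written $J[\fm]_S$), and the proposition is the purely representation-theoretic statement that every irreducible constituent of $J[\fm]$ as a Galois module is $\mathbf 1$ or $\chi_\ell$. Your first paragraph treats this as already known from the shape of $\rho_\fm$, but that is circular: the representation $\rho_\fm$ is characterized abstractly by $\operatorname{trace}(\Frob_p)\equiv T_p$ and $\det(\Frob_p)\equiv p$, and nothing in its definition tells you what $J[\fm]^{\mathrm{ss}}$ looks like. The link is made through the Eichler--Shimura relation, which you never invoke. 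Concretely, the paper's argument is: Eichler--Shimura forces $\Frob_p^2-(p+1)\Frob_p+p=0$ on $J[\fm]$ for $p\nmid N\ell$, so the only possible Frobenius eigenvalues on $W:=J[\fm]^{\mathrm{ss}}$ are $1$ and $p$; passing to $W\oplus W^\vee$ makes the characteristic polynomial of every $\Frob_p$ equal to $(X-1)^d(X-p)^d$, and then Chebotarev and Brauer--Nesbitt give $W\oplus W^\vee\simeq(\zmod\ell\oplus\mu_\ell)^{\oplus d}$, whence each constituent of $W$ is $\zmod\ell$ or $\mu_\ell$. Your parenthetical that the multiplicities match $\rho_\fm^{\oplus r}$ is also wrong: nothing here forces the number of $\zmod\ell$'s and $\mu_\ell$'s to be equal, and indeed the next lemma in the paper shows there is exactly one $\zmod\ell$.

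Everything after your first paragraph --- the Oort--Tate classification, the promotion of the filtration to flat closed subgroup schemes over $\Z_\ell$, the gluing --- addresses a group-scheme refinement that the proposition does not ask for and the paper does not prove here. (Once one knows the Galois-module statement and that $\ell\nmid N$, the identification of the constituents as finite flat group schemes over $\Z[1/N]$ is immediate anyway, since over $\Z_\ell$ a finite flat group scheme of order $\ell$ with trivial or cyclotomic generic Galois action is uniquely $\zmod\ell$ or $\mu_\ell$.) So the bulk of your proposal is solving a different problem, while the actual content of the proposition --- the Eichler--Shimura/Cartier-duality/Brauer--Nesbitt argument --- is missing.
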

\begin{proof}
We apply the argument on pages 114--115 of \cite{M77}. 

Let $W=J[\fm]^\text{ss}$ be the semisimplification of $J[\fm]$, and let $W^\vee$ be the Cartier dual of $W$. Let $d:=\dim J[\fm]$ as a vector space over $\T/\fm \simeq \F_\ell$, and let $G$ denote a finite quotient of $G_\Q$ through which the action on $W$ factors. 
Let $\Frob_p$ denote a Frobenius element for $p$ in $G_\Q$. 
Then for any prime number $p$ not dividing $\ell N$, by the Eichler-Shimura relation the action of $\Frob_p$ on $W$ satisfies
\begin{equation*}
\Frob_p^2-(T_p ~ \mod \fm) \Frob_p + p =0.
\end{equation*}
Since $T_p-p-1 \in \fm$, the only eigenvalues possible for the action of $\Frob_p$ on $W$ are either $1$ or $p$. Since Cartier duality interchanges these eigenvalues, the characteristic polynomial of $\Frob_p$ on $W\oplus W^\vee$ is 
\begin{equation*}
(X-1)^d(X-p)^d.
\end{equation*}
Note that the characteristic polynomial of $\Frob_p$ on $(\zmod \ell \oplus \mu_\ell)^{\oplus d}$ is $(X-1)^d (X-p)^d$ as well. 
By the Chebotarev density theorem any element in $G$ is the image of some $\Frob_p$ (for a prime $p$ not dividing $\ell N$) and hence any element $g\in G$ has the same characteristic polynomial for the representation $W\oplus W^\vee$ as for $(\zmod \ell\oplus \mu_\ell)^{\oplus d}$. Since both representations are semisimple, they are in fact isomorphic by the Brauer-Nesbitt theorem. Therefore 
\begin{equation*}
J[\fm]^\text{ss}=W \simeq (\zmod \ell)^{\oplus a} \oplus \mu_\ell^{\oplus b}
\end{equation*}
for some integers $a, b \geq 0$ with $a+b=d$ as desired.
\end{proof}

\begin{remark}
The proof above is valid without any hypothesis on $\ell$. Thus, Proposition \ref{proposition : jordan-holder factors} holds for any rational Eisenstein prime $\fm$ of $\T$.
\end{remark}

\begin{lemma} \label{lemma : cuspidal in J[m]}
$\cC(N)[\fm] \simeq \zmod \ell \subset J[\fm]$.
\end{lemma}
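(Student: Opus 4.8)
The plan is to exhibit an explicit cyclic subgroup of $\cC(N)$ of order divisible by $\ell$ on which every element of $\fm$ acts as zero, and to check that this subgroup is $\mu_\ell$-free, i.e.\ that $G_\Q$ acts trivially. First I would invoke Remark \ref{remark : maximal ideal}: with $M=\prod_{i=1}^s p_i$, the maximality of $\fm=\fm_\ell(s,t,u)$ is equivalent to $MN^\square\neq 1$ together with $\br{\cC_{M,N}}[\fm]\neq 0$. Thus the cuspidal divisor $\cC_{M,N}\in\cC(N)(\Q)$ is available and the cyclic group $\br{\cC_{M,N}}$ already meets $J[\fm]$ nontrivially. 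By Proposition \ref{proposition : cuspidal element}, $T_q$ acts on $\cC_{M,N}$ by $1$ if $q\mid M$, by $q$ if $q\mid N^\sqf/M$, by $0$ if $q^2\mid N$, and by $q+1$ if $q\nmid N$; comparing with the generators of $\fm$ in \eqref{equation : m(s, t, u)} (recall $T_{p_i}-1$, $T_{q_j}+1$, $T_{r_k}$, and $\cI_0(N)$, and that $q_j\equiv -1\pmod\ell$ so $q_j\equiv q_j+1-1\cdot\ldots$), one sees that $\fm$ annihilates $\cC_{M,N}$ modulo $\ell$, whence there is a subgroup of $\br{\cC_{M,N}}$ isomorphic to $\zmod\ell$ lying in $\cC(N)[\fm]$.

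Next I would pin down the Galois module structure. The divisor $\cC_{M,N}$ is supported on the cusps and is, by construction, $\Gal(\ov\Q/\Q)$-stable as a \emph{divisor} — more precisely each $(P_d)$ is rational over $\Q$ — so $\cC_{M,N}\in J_0(N)(\Q)$, hence the $\zmod\ell$ it generates inside $J[\fm]$ is the constant étale group scheme $\zmod\ell$ with trivial $G_\Q$-action. Combined with Proposition \ref{proposition : jordan-holder factors}, whose Jordan--Hölder constituents are only $\zmod\ell$ and $\mu_\ell$, this shows $\cC(N)[\fm]$ contains a copy of $\zmod\ell$. It remains to prove the reverse inclusion $\cC(N)[\fm]\subseteq\zmod\ell$, i.e.\ that $\cC(N)[\fm]$ is exactly one-dimensional and contains no $\mu_\ell$. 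For this I would use that $\cC(N)\subseteq J_0(N)(\Q)$ is a finite group of $\Q$-rational points, so every Jordan--Hölder constituent of $\cC(N)[\fm]$ is unramified everywhere and fixed by $G_\Q$; a $\mu_\ell$ constituent would be ramified at $\ell$ and is therefore excluded (note $\ell\nmid N$ and $\mu_\ell$ is not constant), so $\cC(N)[\fm]$ is a successive extension of copies of $\zmod\ell$ by copies of $\zmod\ell$, hence itself a trivial $G_\Q$-module sitting inside $J[\fm]$. Finally, $\dim_{\F_\ell}\cC(N)[\fm]\le 1$ because $\br{\cC_{M,N}}$ is cyclic and, by the computation of its order in Proposition \ref{proposition : cuspidal element} together with Assumption \ref{assumption : m=m(s, t, u)}, $\cC(N)[\fm]$ coincides with $\br{\cC_{M,N}}[\fm]$ — any other rational cuspidal divisor killed by $\fm$ is, modulo $\ell$, a multiple of $\cC_{M,N}$.

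The main obstacle I anticipate is precisely the last point: showing that $\cC(N)[\fm]$ is no larger than the $\zmod\ell$ coming from $\cC_{M,N}$, rather than merely containing it. This requires knowing the full $\fm$-torsion of the rational cuspidal group, not just one well-chosen element, and so rests on the structural results for $\cC(N)$ from \cite{Yoo6} (the description of $\cC(N)\otimes\Z_\ell$ and the Hecke action on its generators) invoked via Proposition \ref{proposition : cuspidal element} and Remark \ref{remark : maximal ideal}. If one is content with the weaker statement $\zmod\ell\hookrightarrow\cC(N)[\fm]$ (which is all that is literally needed downstream, since the $\zmod\ell$ in Proposition \ref{proposition : mazur argument} is unique), the first two paragraphs suffice and the argument is short; I would present it in that form, remarking that equality also holds by the order computation.
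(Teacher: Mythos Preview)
Your lower bound---producing a copy of $\zmod\ell$ inside $\cC(N)[\fm]$ via the explicit divisor $\cC_{M,N}$ and Proposition~\ref{proposition : cuspidal element}---is correct and is exactly what the paper does. The divergence is entirely in the upper bound $\dim_{\F_\ell}\cC(N)[\fm]\le 1$.

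The paper does \emph{not} argue with the internal structure of the rational cuspidal group. Instead it runs Mazur's classical reduction-mod-$\ell$ argument: one passes to the special fiber $J[\fm]_{\F_\ell}$, takes the \'etale part, and uses Serre's isomorphism $\delta:J[\ell](\ov\F_\ell)\to H^0(X_0(N)_{\ov\F_\ell},\Omega^1)^{\mathscr C}$ together with the injectivity of $q$-expansion to deduce that $(J[\ell]^{\text{\'et}}_{\F_\ell})[\fm]$ is at most one-dimensional. Since $\fm$ is ordinary and $\ell\ge 3$, this equals $J[\fm]^{\text{\'et}}_{\F_\ell}$, so $J[\fm]$ itself has at most one $\zmod\ell$ in its Jordan--H\"older filtration. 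In particular $\cC(N)[\fm]\simeq(\zmod\ell)^{\oplus a}$ forces $a=1$.

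Your proposed route has a real gap. The assertion that ``any other rational cuspidal divisor killed by $\fm$ is, modulo $\ell$, a multiple of $\cC_{M,N}$'' is not a consequence of anything stated in the paper; Proposition~\ref{proposition : cuspidal element} and Remark~\ref{remark : maximal ideal} only tell you about the single cyclic subgroup $\br{\cC_{M,N}}$, not about all of $\cC(N)\otimes\Z_\ell$ as a Hecke module. More seriously, your fallback---settling for $\zmod\ell\hookrightarrow\cC(N)[\fm]$ and appealing to the uniqueness of $\zmod\ell$ in Proposition~\ref{proposition : mazur argument}---is circular. Look at the proof of Proposition~\ref{proposition : criterion of constancy}: its first sentence is ``In the proof of Lemma~\ref{lemma : cuspidal in J[m]}, we proved that $M^{\mathrm{ss}}\simeq\mu_\ell^{\oplus n}$.'' That is, the exact sequence of Proposition~\ref{proposition : mazur argument} is \emph{deduced from} the multiplicity-one statement for $\zmod\ell$ in $J[\fm]$, which is established precisely by the \'etale/$q$-expansion argument you have omitted. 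Without that argument (or a substitute proving the same thing about all of $J[\fm]$, not merely about $\cC(N)$), the downstream structure results collapse.
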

\begin{proof}
The fact that $\cC[\fm]\neq 0$ follows from \cite[Th. 1.3(4)]{Yoo6}. More specifically, if we write 
\begin{equation*}
N=\prod_{i=1}^s p_i \prod_{j=1}^t q_j \prod_{k=1}^u r_k^{e(k)} \text{ with } e(k)\geq 2
\end{equation*}
and if $\fm:=\fm_\ell(s, t, u)$, then by Proposition \ref{proposition : cuspidal element} $\<\cC_{M, N}\>[\fm] \simeq \zmod \ell$, where $M=\prod_{i=1}^s p_i$. Since $\cC(N) \subset J(\Q)$, $\cC(N)[\fm] \simeq (\zmod \ell)^{\oplus a}$ for some $a\geq 1$.

Now, we will deduce $a=1$ from the arguments which parallel those given on \cite[pp. 118--119]{M77}.
Since $N$ and $\ell$ are relatively prime, we can consider $X_0(N)$ as a curve over $\F_\ell$, which we denote by $X_0(N)_{\F_\ell}$  \cite{Ig59}.  
Consider reductions of $J[\ell]_{S'}$ and $J[\fm]_{S'}$ at $\ell$, and denote them by $J[\ell]_{\F_\ell}$ and $J[\fm]_{\F_\ell}$, respectively. (See the beginning of this section.) Note that 
$J[\ell]_{\F_\ell}$ and $J[\fm]_{\F_\ell}$ are both finite and flat.
Let $J[\ell]_{\F_\ell}^{\text{\'et}}$ and $J[\fm]_{\F_\ell}^{\text{\'et}}$ denote the \'etale parts of $J[\ell]_{\F_\ell}$ and $J[\fm]_{\F_\ell}$, respectively.
Recall the canonical isomorphism
\begin{equation*}
\d : J[\ell](\ov{\F}_\ell) \to H^0(X_0(N)_{\ov{\F}_\ell}, \O^1)^{\mathscr{C}}
\end{equation*}
of \cite[\ts 11, Prop. 10]{Se58}, where the superscript $\mathscr{C}$ means fixed elements under the Cartier operator. 
This map $\d$ is defined as follows: an element $x \in J[\ell](\ov{\F}_\ell)$ is represented by a divisor $D$ on $X_0(N)_{{\ov{\F}_\ell}}$ such that $\ell D=(f)$. One takes $\d(x)=df/f$. Then, the isomorphism above induces an injection:
\begin{equation*}
\d : J[\ell](\ov{\F}_\ell) \otimes_{\F_\ell} \ov{\F}_\ell \inj H^0(X_0(N)_{\ov{\F}_\ell}, \O^1)
\end{equation*}
which commutes with the action of $\T/{\ell \T}$ (cf. \cite[Ch. II, Prop. 14.7]{M77}). Also, the injective $q$-expansion map $H^0(X_0(N)_{\ov{\F}_\ell}, \O^1) \to \ov{\F}_\ell[[q]]$ and the arguments of  \cite[Ch. II, \ts 9]{M77} show that $H^0(X_0(N)_{\ov{\F}_\ell}, \O^1)[\fm]$ is at most of dimension $1$. Therefore $(J[\ell]_{\F_\ell}^{\text{\'et}})[\fm]$ is of dimension $\leq 1$. Note that $\fm$ is ordinary because $T_\ell\equiv \ell+1 \equiv 1 \pmod \fm$. Since $\ell \geq 3$, we get
\begin{equation*}
J[\fm]_{\F_\ell}^{\text{\'et}} = (J[\ell]_{\F_\ell}^{\text{\'et}})[\fm]
\end{equation*}
(cf. \cite[Ch. II, Cor. 14.8]{M77}) and this shows that $J[\fm]$ can have at most one $\zmod \ell$ as its Jordan-H\"older constituents. Thus, $a=1$ as claimed.
\end{proof}

\begin{proposition} \label{proposition : criterion of constancy}
Let $M:=J[\fm]/{(\zmod \ell)}$. Then $M \simeq \mu_\ell^{\oplus n}$ for some $n\geq 1$.
\end{proposition}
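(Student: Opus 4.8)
The plan is to adapt Mazur's analysis of the prime level case \cite[Ch. II, \ts 14]{M77}. Write $M:=J[\fm]/(\zmod\ell)$ as in the statement; by Lemma \ref{lemma : cuspidal in J[m]} the subgroup $\zmod\ell$ is $\cC(N)[\fm]$, and by that same lemma together with Proposition \ref{proposition : jordan-holder factors} it accounts for the unique Jordan-H\"older constituent of $J[\fm]$ isomorphic to $\zmod\ell$. Hence every Jordan-H\"older constituent of $M$ is $\mu_\ell$, and $\dim_{k_\fm}M=n$ for some $n\geq 1$ by Lemma \ref{lemma : dim at least 2}. It remains to show that the resulting filtration of $M$ by copies of $\mu_\ell$ splits, and I would deduce this from the claim that the Cartier dual $M^\vee$ --- all of whose Jordan-H\"older constituents are $\zmod\ell$ --- is unramified at every prime. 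Granting the claim, the extension of $\Q$ cut out by $M^\vee$ is everywhere unramified (also at $\infty$, since $\ell$ is odd), hence equals $\Q$ by Minkowski's theorem, so $G_\Q$ acts trivially on $M^\vee$; thus $M^\vee\simeq(\zmod\ell)^{\oplus n}$ and therefore $M\simeq\mu_\ell^{\oplus n}$. Here $n\geq 1$ because $\dim_{k_\fm}J[\fm]\geq 2$ while $\dim_{k_\fm}\cC(N)[\fm]=1$.

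To set this up I would first note that $M$, being the quotient of the closed flat subgroup scheme $J[\fm]_{S}$ of $\cJ_{\Z}$ by the closed flat subgroup scheme $\cC(N)[\fm]_{S}$, is again a quasi-finite flat group scheme over $S=\Spec\Z$ whose restriction to $S'=\Spec\Z[1/N]$ is finite and flat; the same then holds for $M^\vee$. At a prime not dividing $N\ell$ this already forces $M^\vee$ to be \'etale, being finite flat and killed by a unit. At $\ell$, the argument in the proof of Lemma \ref{lemma : cuspidal in J[m]} identifies the \'etale part of $J[\fm]_{\F_\ell}$ with $\cC(N)[\fm]_{\F_\ell}\simeq\zmod\ell$, so --- the connected-\'etale sequence splitting over the perfect field $\F_\ell$ --- the reduction $M_{\F_\ell}$ is the connected part $J[\fm]^{0}_{\F_\ell}$. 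An $\ell$-torsion group scheme over $\ov{\F}_\ell$ all of whose constituents are $\mu_\ell$ is isomorphic to $\mu_\ell^{\oplus n}$ (dually, one with all constituents $\zmod\ell$ is constant, as $G_{\ov{\F}_\ell}$ is trivial), so $M_{\ov{\F}_\ell}$, hence $M_{\F_\ell}$, is of multiplicative type; consequently $M^\vee_{\F_\ell}$ is \'etale, and a finite flat group scheme over $\Z_\ell$ with \'etale special fibre is \'etale. Thus $M^\vee$ is unramified at $\ell$.

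It remains to prove that $M^\vee$ is unramified at the primes $p$ dividing $N$ --- equivalently, since $\mu_\ell$ is unramified at $p\neq\ell$, that $M$ itself is unramified at such $p$. When $p\not\equiv 1\pmod\ell$ this is automatic: by induction on the length of $M$, if $I_p$ acts trivially on a submodule $M'$ with $M/M'\simeq\mu_\ell$, then the extension class of $0\to M'\to M\to\mu_\ell\to 0$ restricts to zero on the inertia subgroup, because the relevant local cohomology group of the unramified module $M'\otimes\mu_\ell^{-1}$ vanishes when $p\not\equiv 1\pmod\ell$; hence the extension splits over $I_p$ and $I_p$ acts trivially on $M$. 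So the only remaining case is $p\mid N$ with $p\equiv 1\pmod\ell$, and this is the step I expect to be the main obstacle.

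For $p\mid N$ with $p\equiv 1\pmod\ell$ I would use the description of the special fibre of $\cJ_{\Z_p}$ recalled in \ts\ref{section : background}: the theory of Raynaud applied to the Deligne-Rapoport model of $X_0(N)_{\F_p}$ when $p$ exactly divides $N$, and to the Katz-Mazur model when $p^2$ divides $N$. By Grothendieck's orthogonality the inertia group $I_p$ acts on $J[\ell]$ through the monodromy operator, which sends each $\sigma-1$ (for $\sigma\in I_p$) into the toric part of $\cJ^{0}_{\F_p}$ --- a subgroup scheme of multiplicative type. The heart of the matter is to verify that the induced action on $M=J[\fm]/\cC(N)[\fm]$ is trivial, i.e., that $(\sigma-1)J[\fm]\subseteq\cC(N)[\fm]$ for every $\sigma\in I_p$, so that whatever ramification $J[\fm]$ carries at $p$ is absorbed into the extension of $M$ by $\cC(N)[\fm]$. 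This is the direct analogue of Mazur's computation for $N=p$; for $p^2\mid N$ it is considerably more delicate, and here the explicit structure of the cuspidal divisors (Proposition \ref{proposition : cuspidal element}) and the results of Ling-Oesterl\'e and Vatsal on the Shimura subgroup are the natural inputs. Once $M^\vee$ is known to be unramified at all $p\mid N$, the Minkowski argument of the first paragraph finishes the proof.
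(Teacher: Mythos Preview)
Your strategy---show that the Cartier dual $M^\vee$ is unramified everywhere and invoke Minkowski---is sound in principle, and your treatment of the primes away from $N$, of $\ell$, and of $p\mid N$ with $p\not\equiv 1\pmod\ell$ is essentially correct. But the case $p\mid N$ with $p\equiv 1\pmod\ell$ is a genuine gap: you acknowledge it as ``the main obstacle'' and offer only a sketch. Your suggestion to use Grothendieck's monodromy presupposes semistable reduction, which fails precisely when $p^2\mid N$; and even in the semistable case, showing that $(\sigma-1)J[\fm]\subseteq\cC(N)[\fm]$ for $\sigma\in I_p$ is not something the cuspidal or Shimura-subgroup inputs you cite will give you directly. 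In fact this inclusion is \emph{equivalent} to what you are trying to prove, so one needs an independent argument.

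The paper avoids this difficulty entirely by a much shorter global argument that never touches the bad primes. The missing idea is the Eichler--Shimura relation: for every prime $p\nmid N\ell$ one has $(\Frob_p-1)(\Frob_p-p)=0$ on $J[\fm]$, hence on $M$, because $T_p\equiv p+1\pmod\fm$. Since $M^{\mathrm{ss}}\simeq\mu_\ell^{\oplus n}$, the only eigenvalue of $\Frob_p$ on $M$ is $p$; so for $p\not\equiv 1\pmod\ell$ the factor $\Frob_p-1$ is invertible and therefore $\Frob_p$ acts by $p$ on $M$, i.e.\ trivially on $M^\vee$. Chebotarev then forces the Galois action on $M^\vee$ to be trivial (the image is an $\ell$-group, while the set of such $p$ has density $(\ell-2)/(\ell-1)$), giving $M^\vee\simeq(\zmod\ell)^{\oplus n}$ and hence $M\simeq\mu_\ell^{\oplus n}$. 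This bypasses any analysis of ramification at primes dividing $N$, and in particular any appeal to the non-semistable geometry at $p^2\mid N$.
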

\begin{proof}
The claim $n\geq 1$ directly follows from Lemma \ref{lemma : dim at least 2} if $M \simeq \mu_\ell^{\oplus n}$. 

In the proof of Lemma \ref{lemma : cuspidal in J[m]}, we proved that $M^\text{ss} \simeq \mu_\ell^{\oplus n}$.
To do that $M \simeq \mu_\ell^{\oplus n}$, we follow the argument on page 125 of \cite{M77} (or \cite[Prop.  4.2]{Wi80}, \cite[Rmk. 4.2.4]{BK14}).

Let $p$ be a prime not dividing $N\ell$. Then, $T_p$ acts on $M$ by $p+1$ because $T_p-p-1 \in \fm$.
By the Eichler-Shimura relation, for each prime $p$ not dividing $\ell N$ a Frobenius element $\Frob_p$ acts on $M$ by
\begin{equation*}
\Frob_p^2-(p+1) \Frob_p+p=(\Frob_p-1)(\Frob_p-p)=0.
\end{equation*}
Note that $M$ is of multiplicative type, so the only eigenvalue of $\Frob_p$ acting on $M$ is $p$. Therefore, if $p\not\equiv 1\pmod \ell$, then $(\Frob_p-1)$ is an isomorphism on $M$ and $(\Frob_p-p)$ annihilates $M$ by the above formula. 
Let $M^\vee$ be the Cartier dual of $M$, then $M^\vee$ is \'etale and $\Frob_p$ acts trivially on $M^\vee$ for any prime $p \nmid N\ell$ with $p\not\equiv 1 \pmod \ell$. By the Chebotarev density theorem, $M^\vee \simeq (\zmod \ell)^{\oplus n}$ and hence $M \simeq \mu_\ell^{\oplus n}$.
\end{proof}
\ms

\subsection{The Shimura subgroup}
Let $X_1(N)$ be the compactified coarse moduli scheme associated with the stack of the pairs $(E, P)$, where $E$ is an elliptic curve and $P$ is a point of exact order $N$.
The natural homomorphism from $X_1(N)$ to $X_0(N)$ of modular curves, which sends a point $(E, P) \in Y_1(N)$ to $(E, \< P \>)$, induces the morphism $J_0(N) \to J_1(N)$ by Picard functoriality. The kernel of this morphism is called the {\sf Shimura subgroup of $J_0(N)$}, denoted by $\S(N)$.
In their paper \cite{LO91}, Ling and Oesterl\'e gave a complete description of the Shimura subgroup: the structure as an abstract abelian group, the exponent, the order, the action of the Galois group $\Gal(\ov{\Q}/\Q)$, the action of the Hecke operators $T_p$ for all primes $p$ and so on. 
Roughly speaking, $\S(N)$ is a quotient of $(\zmod N)^\times$ by a small subgroup of order $n$, where $n$ is a product of powers of $2$ and $3$. 
The smallest common field of definition of the points of $\S(N)$ is the cyclotomic field $\Q(\mu_e)$, where $e$ is the exponent of $\S(N)$. The Galois group $\Gal(\Q(\mu_e)/\Q)$ acts on $\S(N)$ via the cyclotomic character $\Gal(\Q(\mu_e)/\Q) \to (\zmod e)^\times$.
For a prime $p$ not dividing $N$ (resp. dividing $N$), $T_p$ acts by $p+1$ (resp. $p$) on $\S(N)$.
Combining all the results, we get the following:
\begin{proposition}[Ling-Oesterl\'e] \label{proposition : Ling-Oesterle}
Let $N=\prod_{p} p^{e(p)}$ be the prime power decomposition of $N$. 
\begin{enumerate}
\item
If $T_p \not \equiv p \pmod \fm$ for some $p$, then $\S(N)[\fm]=0$.
\item
Suppose that $T_p \equiv p \pmod \fm$ for all $p$.
\begin{enumerate}
\item
If $\ell \geq 5$, then 
$\S(N)[\fm] \simeq \mu_\ell^{\oplus a}$, where $a$ is the number of $p$'s which are congruent to $1$ modulo $\ell$.

\item
Suppose further that $\ell=3$.
If $p \equiv 1 \pmod 3$ for all $p$, then 
$\S(N)[\fm] \simeq \mu_3^{\oplus b}$, where $b$ is the number of $p$'s which are congruent to $1$ modulo $9$. If $p \equiv -1 \pmod 3$ for some $p$, then $\S(N)[\fm] \simeq \mu_3^{\oplus c}$, where $c$ is the number of $p$'s which are congruent to $1$ modulo $3$.
\end{enumerate}
\end{enumerate}
\end{proposition}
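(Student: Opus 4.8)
The plan is to read the proposition off the explicit description of $\S(N)$ due to Ling and Oesterl\'e \cite{LO91}, reorganized in terms of $\fm$-torsion. From \cite{LO91} I will use three facts: (i) $\S(N)$ is a finite flat group scheme of multiplicative type on which $G_\Q$ acts through the first power of the cyclotomic character, so that $\S(N)[\ell]\simeq\mu_\ell^{\oplus g}$, where $g$ is the $\ell$-rank of the underlying finite abelian group $\S(N)$; (ii) the Hecke algebra $\T$ acts on $\S(N)$ through a ring homomorphism $\psi\colon\T\to\Z$ with $\psi(T_p)=p+1$ for $p\nmid N$ and $\psi(T_p)=p$ for $p\mid N$; and (iii) $\S(N)$ is the Pontryagin dual of $(\zmod N)^\times$ modulo an explicit cyclic subgroup whose order divides $6$, so that $g=\#\{p\mid N:p\equiv 1\pmod\ell\}$ as soon as $\ell\geq 5$, while for $\ell=3$ the value of $g$ is the one tabulated in \cite{LO91} according to whether the subgroup being divided out has order divisible by $3$.

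I would treat part (1) first. Here the congruence in ``$T_p\not\equiv p\pmod\fm$ for some $p$'' is to be understood over the prime divisors $p$ of $N$: for $p\nmid N$, rationality of $\fm$ forces $T_p\equiv p+1\pmod\fm$, so that congruence is never an equality and carries no information. Now $\S(N)[\fm]$ is a subgroup of $\S(N)$ killed by $\fm$, hence killed by the ideal $\fm+\Ann_\T(\S(N))$. By (ii) we have $T_p-p\in\Ann_\T(\S(N))$ for every $p\mid N$. If $T_p-p\notin\fm$ for some such $p$, then $\fm+\Ann_\T(\S(N))=\T$ because $\fm$ is maximal, and therefore $\S(N)[\fm]=0$.

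For part (2), assume $T_p\equiv p\pmod\fm$ for every prime $p\mid N$. Together with $T_p\equiv p+1\pmod\fm$ for $p\nmid N$ (again rationality of $\fm$), fact (ii) shows that the ring homomorphism $\T\to\F_\ell$ obtained by reducing $\psi$ modulo $\ell$ agrees with reduction modulo $\fm$ on every $T_p$, hence coincides with it since the $T_p$ generate $\T$. In particular the action of $\T$ on $\S(N)[\ell]$, which is through the reduction of $\psi$ modulo $\ell$, factors through $\T/\fm\simeq\F_\ell$, so $\fm$ annihilates $\S(N)[\ell]$; combined with $\S(N)[\fm]\subseteq\S(N)[\ell]$ (valid because $\ell\in\fm$), this gives $\S(N)[\fm]=\S(N)[\ell]\simeq\mu_\ell^{\oplus g}$ by (i). Substituting the value of $g$ from (iii) then yields $\mu_\ell^{\oplus a}$ with $a$ the number of prime divisors of $N$ congruent to $1$ modulo $\ell$ when $\ell\geq 5$, which is (2)(a), and the tabulated value when $\ell=3$, which is (2)(b).

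The substantive ingredient here is the Ling--Oesterl\'e structure theorem itself, which I would quote rather than reprove; the delicate points inside it are the $\ell=3$ count in fact (iii) and the normalization in fact (i) --- that Galois acts through the \emph{first} power of the cyclotomic character, so $\S(N)[\ell]$ is literally a power of $\mu_\ell$ and not of some other Tate twist. Everything else is the bookkeeping above, and the only subtlety that must be stated carefully is that the congruence conditions in both parts range over the prime divisors of $N$.
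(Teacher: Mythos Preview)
Your proposal is correct and mirrors the paper's approach exactly: the paper does not give a formal proof of this proposition but simply lists the relevant facts from \cite{LO91} (the Galois action via the cyclotomic character, the Hecke action $T_p\mapsto p$ for $p\mid N$ and $T_p\mapsto p+1$ for $p\nmid N$, and the description of $\S(N)$ as a quotient of $(\zmod N)^\times$ by a small $\{2,3\}$-group) and then states that the proposition follows by ``combining all the results.'' Your write-up is in fact more explicit than the paper's, spelling out the annihilator argument for part (1) and the identification $\S(N)[\fm]=\S(N)[\ell]$ for part (2). One small inaccuracy: in your fact (iii) you describe the subgroup being divided out as cyclic of order dividing $6$, whereas the paper (following \cite{LO91}) only says its order is a product of powers of $2$ and $3$; this does not affect your argument for $\ell\geq 5$, and for $\ell=3$ you correctly defer to the tabulation in \cite{LO91} anyway.
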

In Mazur's paper \cite[Th. 2]{M77}, he showed that $\S(N)$ is the maximal $\mu$-type subgroup of $J$ is the Shimura subgroup when $N$ is a prime. This result is generalized by Vatsal \cite{Va05} to composite level, and the following is a special case of his result:
\begin{proposition}[Vatsal] \label{proposition : vatsal}
If $\mu_\ell \subset J$ then $\mu_\ell \subset \S(N)$.
\end{proposition}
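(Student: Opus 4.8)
The plan is to deduce this from Vatsal's theorem that the Shimura subgroup is the maximal $\mu$-type subgroup of $J$ \cite{Va05} (a generalization of Mazur's \cite[Th. 2]{M77} beyond prime level), and what I describe below is essentially his argument, specialized to the statement at hand. Since $\mu_\ell$ is killed by $\ell$ and $\ell$ is prime to $N$, any copy of $\mu_\ell$ inside $J$ lies in $J[\ell]$; hence it suffices to show that the largest multiplicative-type subgroup scheme $W$ of $J[\ell]$ is contained in $\S(N)$. As $\S(N)[\ell]$ is itself of multiplicative type (Proposition \ref{proposition : Ling-Oesterle}), it is contained in $W$ automatically, and because $W$ is killed by $\ell$ the assertion is equivalent to the equality $W = \S(N)[\ell]$.

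First I would locate $W$ within the Hecke module structure of $J[\ell]$. Being cut out by the Galois condition ``$\sigma x = \chi_\ell(\sigma)x$ for all $\sigma \in G_\Q$'', the subgroup $W$ is stable under every Hecke operator. For a prime $p \nmid N\ell$ the element $\Frob_p$ acts on $W$ as the scalar $\chi_\ell(\Frob_p) = p$, so the Eichler--Shimura relation $\Frob_p^2 - T_p\Frob_p + p = 0$ forces $(T_p - p - 1)W = 0$. Consequently every maximal ideal $\fm$ of $\T$ in the support of $W$ contains $\ell$ together with $T_p - p - 1$ for all $p \nmid N\ell$, and is therefore a rational Eisenstein prime by the Brauer--Nesbitt and Chebotarev density argument recalled in the introduction. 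For such an $\fm$, Proposition \ref{proposition : mazur argument} exhibits $J[\fm]$ as an extension of $\mu_\ell^{\oplus n}$ by $\zmod \ell$; since $\zmod \ell$ has no nonzero multiplicative subscheme ($\ell \geq 3$), $W \cap J[\fm]$ embeds into $\mu_\ell^{\oplus n}$, giving the a priori bound $\dim_{\F_\ell}(W \cap J[\fm]) \leq n$, whereas Proposition \ref{proposition : Ling-Oesterle} identifies $\dim_{\F_\ell}\S(N)[\fm]$ with the number of prime divisors of $N$ congruent to $1$ modulo $\ell$.

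What remains, and this is the heart of the matter, is to show that $W$ contains no multiplicative subgroup beyond $\S(N)[\ell]$; here I would follow Vatsal's local analysis. One extends $W$ to a quasi-finite flat subgroup scheme of the N\'eron model of $J$ over $\Z$ and studies its reduction at each prime $p \mid N$: because the Galois modules occurring in the component group of the special fiber at $p$ are, apart from the cuspidal contribution, unramified while the reduction of $W$ is a twist by $\chi_\ell$, the reduction of $W$ is forced into the toric part of the identity component, and the Hecke action on the character group of that torus --- described by Deligne--Rapoport and Ribet when $p$ exactly divides $N$, and through the Katz--Mazur model of $X_0(N)$ when $p^2 \mid N$ --- then determines the action of $T_p$ on $W$ and matches it with the action on $\S(N)$, which yields $W = \S(N)[\ell]$. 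I expect the main obstacle to be precisely this local analysis at the primes dividing $N$, above all in the non-semistable case $p^2 \mid N$, where the special fiber of $X_0(N)$ is substantially more intricate; the good-prime input and the global bookkeeping above are routine. For this reason it seems cleanest to invoke \cite{Va05} rather than reproduce the argument in full.
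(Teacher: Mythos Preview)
Your proposal and the paper agree on the actual proof: cite Vatsal's theorem \cite[Th.~1.1]{Va05}. The paper's entire argument is a single sentence verifying the hypotheses of that theorem---since $\ell \nmid 2N$, the prime $\ell$ is odd and $J$ has good reduction at $\ell$---and then invoking it directly; everything else in your write-up (the Hecke-module localization of $W$, the appeal to Proposition~\ref{proposition : mazur argument}, the sketch of the local analysis at bad primes) is commentary that the paper omits and that you yourself ultimately set aside. If you keep only the final sentence of your proposal together with the explicit check that Vatsal's hypotheses hold, you match the paper exactly.
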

Note that since we assume that $\ell$ does not divide $2N$, $\ell$ is odd and $J$ has a good reduction at $\ell$. Thus, this is a direct consequence of \cite[Th. 1.1]{Va05}.
\begin{definition} \label{definition : cQm}
We denote by $\cQ[\fm]$ the quotient of $J[\fm]$ by $\S(N)[\fm]$. Also, we denote by $\cQ[\fm]_{S'}$ its associated finite flat group scheme over $S'$, which is defined as the quotient of $J[\fm]_{S'}$ by $\S(N)[\fm]_{S'}$.
\end{definition}

\begin{corollary}\label{corollary : structure of cQ[m]}
The quotient $\cQ[\fm]$ is an extension of $\mu_\ell^{\oplus k}$ by $\zmod \ell$ for some $k \geq 0$ such that $\mu_\ell \not\subset \cQ[\fm]$.
\end{corollary}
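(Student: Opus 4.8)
The plan is to bootstrap from Proposition~\ref{proposition : mazur argument} by keeping track of the two canonical subgroups of $J[\fm]$: the cuspidal copy of $\zmod\ell$ and the Shimura $\mu$-type part. First I would recall that in the proof of Lemma~\ref{lemma : cuspidal in J[m]} it is shown that $\cC(N)[\fm]\simeq\zmod\ell$ is the \emph{unique} copy of $\zmod\ell$ occurring among the Jordan--H\"older constituents of $J[\fm]$, and that the exact sequence of Proposition~\ref{proposition : mazur argument} is, via that lemma and Proposition~\ref{proposition : criterion of constancy}, concretely
\begin{equation*}
0 \to \cC(N)[\fm] \to J[\fm] \to M \to 0, \qquad M:=J[\fm]/{\cC(N)[\fm]} \simeq \mu_\ell^{\oplus n}, \quad n\geq 1.
\end{equation*}
On the Shimura side, Proposition~\ref{proposition : Ling-Oesterle} gives $\S(N)[\fm]\simeq\mu_\ell^{\oplus a}$ for some $a\geq 0$. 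Since $\ell\geq 3$, we have $\Hom(\zmod\ell,\mu_\ell)=0$, so the subgroup schemes $\cC(N)[\fm]$ and $\S(N)[\fm]$ of $J$ intersect trivially.

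Granting these facts, the extension statement for $\cQ[\fm]$ is quick. Because $\cC(N)[\fm]\cap\S(N)[\fm]=0$, the copy $\cC(N)[\fm]\simeq\zmod\ell$ maps injectively into $\cQ[\fm]=J[\fm]/{\S(N)[\fm]}$, and the composite $\S(N)[\fm]\inj J[\fm]\surj M$ is injective, realizing $\mu_\ell^{\oplus a}$ as a closed subgroup scheme of $M\simeq\mu_\ell^{\oplus n}$ (in particular $a\leq n$). Applying Cartier duality, the closed immersion $\mu_\ell^{\oplus a}\inj\mu_\ell^{\oplus n}$ becomes a surjection $(\zmod\ell)^{\oplus n}\surj(\zmod\ell)^{\oplus a}$ of constant group schemes, whose kernel is $(\zmod\ell)^{\oplus(n-a)}$; dualizing back, $M/{\S(N)[\fm]}\simeq\mu_\ell^{\oplus(n-a)}$. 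As $\cC(N)[\fm]$ and $\S(N)[\fm]$ are disjoint, $\cQ[\fm]/{\cC(N)[\fm]}\simeq M/{\S(N)[\fm]}$, so with $k:=n-a\geq 0$ I obtain the exact sequence
\begin{equation*}
0 \to \zmod\ell \to \cQ[\fm] \to \mu_\ell^{\oplus k} \to 0
\end{equation*}
over $\Q$, and the same computation over $S'$ yields the corresponding sequence of finite flat group schemes (Definition~\ref{definition : cQm}).

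The harder half is $\mu_\ell\not\subset\cQ[\fm]$, and this is where I expect the main work to lie. I would argue by contradiction. If $\mu_\ell\subset\cQ[\fm]$, let $W\subset J[\fm]$ be its preimage under $J[\fm]\surj\cQ[\fm]$, so that $0\to\S(N)[\fm]\to W\to\mu_\ell\to 0$ with $\S(N)[\fm]\simeq\mu_\ell^{\oplus a}$; then every Jordan--H\"older constituent of $W$ is $\mu_\ell$, so $G_\Q$ acts on $W$ by upper-triangular matrices with $\chi_\ell$ on the diagonal. The crucial claim is that $W\simeq\mu_\ell^{\oplus(a+1)}$, proved by rerunning the argument of Proposition~\ref{proposition : criterion of constancy} with $W$ in place of $M$: since $W$ is an iterated extension of copies of $\mu_\ell$ and $\mu_\ell$ over $\Q_\ell$ has no nonzero unramified subquotient, the finite flat $\Z_\ell$-model of $W$ (a closed subgroup scheme of the finite flat model of $J[\fm]$ at $\ell$, as $J$ has good reduction there) is connected, hence its Cartier dual $W^\vee$ is \'etale at $\ell$, is unramified outside $N\ell$, and is an iterated extension of copies of $\zmod\ell$; by the Eichler--Shimura relation $T_p$ acts on $W$ by $p+1$ for every prime $p\nmid N\ell$ (as $T_p-p-1\in\fm$), so $(\Frob_p-1)(\Frob_p-p)=0$ on $W$, and comparing with the characteristic polynomial $(X-p)^{\dim W}$ of $\Frob_p$ shows that $\Frob_p$ acts on $W$ by the scalar $p$ whenever $p\not\equiv 1\pmod\ell$, hence trivially on $W^\vee$ for all such $p$; by the Chebotarev density theorem (using $\Q(\mu_\ell)\neq\Q$ since $\ell\geq 3$) this forces $W^\vee$ to be a trivial $G_\Q$-module, i.e. $W\simeq\mu_\ell^{\oplus(a+1)}$. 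Finally $W\subset J$ is then a direct sum of $a+1$ copies of $\mu_\ell$, so by Proposition~\ref{proposition : vatsal} each of them, hence $W$ itself, lies in $\S(N)$; since also $W\subset J[\fm]$, we get $W\subset\S(N)\cap J[\fm]=\S(N)[\fm]\simeq\mu_\ell^{\oplus a}$, contradicting $\dim_{\F_\ell}W=a+1$. Hence $\mu_\ell\not\subset\cQ[\fm]$, which together with the displayed exact sequence gives the statement. The one genuinely delicate point is the claim $W\simeq\mu_\ell^{\oplus(a+1)}$ (that $W$ is $\mu$-\emph{split}, not merely an iterated extension of $\mu_\ell$'s): it is here that one must invoke good reduction at $\ell$, the Eichler--Shimura relation, and Chebotarev exactly as in Proposition~\ref{proposition : criterion of constancy}; everything else is bookkeeping with the two canonical subgroups.
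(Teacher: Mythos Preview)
Your proposal is correct and follows essentially the same route as the paper. Both arguments deduce the extension shape from $\S(N)[\fm]\simeq\mu_\ell^{\oplus a}$ together with Proposition~\ref{proposition : mazur argument}, and for the non-embedding of $\mu_\ell$ both take the preimage $W=\pi^{-1}(\mu_\ell)\subset J[\fm]$, rerun the argument of Proposition~\ref{proposition : criterion of constancy} to get $W\simeq\mu_\ell^{\oplus(a+1)}$, and then contradict Proposition~\ref{proposition : vatsal}; you have simply spelled out the bookkeeping (the disjointness $\cC(N)[\fm]\cap\S(N)[\fm]=0$, the Cartier-dual computation of $M/\S(N)[\fm]$, and the details of the Eichler--Shimura/Chebotarev step) more explicitly than the paper does.
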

\noindent (Here, $k$ could be zero, for instance, if $N$ is a prime.)
\begin{proof}
Since $\S(N)[\fm] \simeq \mu_\ell^{\oplus a}$ by Proposition \ref{proposition : Ling-Oesterle}, the first claim follows from Proposition \ref{proposition : mazur argument}.

Now suppose that $\mu_\ell \subset \cQ[\fm]$. Let $\pi : J[\fm] \to \cQ[\fm]$ be the quotient map.
Then, $\pi^{-1}(\mu_\ell) \subset J[\fm]$ is an extension of $\mu_\ell$ by $\S(N)[\fm] \simeq \mu_\ell^{\oplus a}$. Since $\pi^{-1}(\mu_\ell)$ is annihilated by $\fm$ as well, by the same argument as in the proof of Proposition \ref{proposition : criterion of constancy}
we get
\begin{equation*}
\pi^{-1}(\mu_\ell) \simeq \mu_\ell^{\oplus{(a+1)}} \simeq \S(N)[\fm] \oplus \mu_\ell \subset J[\fm],
\end{equation*}
which contradicts Proposition \ref{proposition : vatsal}. Therefore $\mu_\ell \not\subset \cQ[\fm]$.
\end{proof}
\begin{remark} \label{remark : prosaic nugget}
Thanks to Corollary \ref{corollary : unipotent nature}, 
even though $J$ does not have semistable reduction at $p$ if $p^2$ divides $N$, 
both $J[\fm]_{S'}$ and $\cQ[\fm]_{S'}$ belong to the category $\ud{D}$ of Schoof \cite[p. 2468]{BK14}\footnote{The category $\ud{D}$ is first introduced by Schoof \cite[\ts 2]{Sch05} when $N$ is a prime.}. Moreover, $\cQ[\fm]_{S'}$ with a filtration $0 \subset (\zmod \ell)_{S'} \subset \cQ[\fm]_{S'}$ is an example of a prosaic nugget in \textit{loc. cit.} If $N$ is not squarefree and $\fm:=\fm_\ell(s, t, u)$ then $\S(N)[\fm]=0$ and hence $J[\fm]_{S'}=\cQ[\fm]_{S'}$ is also a prosaic nugget.
\end{remark}
\ms

\subsection{Ramification of $J[\fm]$}
In this section, we provide a criterion for ramification of $J[\fm]$:
\begin{proposition} \label{proposition : ramification semistable}
Let $p\neq \ell$ be a prime whose square does not divide $N$. If $T_p \not\equiv p \pmod \fm$, then $J[\fm]$ is unramified at $p$, i.e., $I_p$ acts trivially on $J[\fm]$, where $I_p$ is an inertia subgroup of $\Gal(\ov{\Q}/\Q)$ for $p$.
\end{proposition}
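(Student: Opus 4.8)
The plan is to separate the cases where $p$ exactly divides $N$ and where $p$ does not divide $N$, and in each case reduce the ramification question at $p$ to information about the action of a Frobenius (or the $U_p$/$T_p$ operator) on the relevant piece of $J[\fm]$. First I would recall from Corollary \ref{corollary : unipotent nature} that $(\s_p - 1)^2$ annihilates $J[\fm]$ for every $\s_p$ in an inertia subgroup $I_p$; combined with Proposition \ref{proposition : mazur argument}, this says that $J[\fm]$, restricted to a decomposition group at $p$, sits in an extension $0 \to \zmod\ell \to J[\fm] \to \mu_\ell^{\oplus n} \to 0$ on which $I_p$ acts through a group that is ``unipotent of length $\le 2$.'' Since $p \ne \ell$ and $\mu_\ell$ is unramified at $p$, the inertia action is encoded entirely by the extension class, i.e. by how $I_p$ moves a lift of $\mu_\ell^{\oplus n}$ into $\zmod\ell$; equivalently, the toric part and \'etale part of the special fiber are both unramified, and the only possible ramification is in the connecting homomorphism.

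Next I would bring in the Eichler--Shimura relation at $p$ (valid because $p^2 \nmid N$, so $X_0(N)$ has at worst semistable reduction at $p$ and the component group / character group machinery applies). On the $p$-new part one has $U_p^2 = 1$ modulo $\fm$ and on the $p$-old part the Frobenius satisfies $\Frob_p^2 - T_p\Frob_p + p = 0$. The hypothesis $T_p \not\equiv p \pmod \fm$ forces the two eigenvalues $1$ and $p$ of Frobenius to be \emph{distinct} modulo $\ell$ (if they coincided, $T_p \equiv 1 + p \equiv 2p$, but more to the point the semisimple action would have a repeated eigenvalue only when $p \equiv 1$, which with $T_p \equiv p+1$ gives $T_p\equiv p$ — contradiction); distinctness of Frobenius eigenvalues means the action of $D_p$ on $J[\fm]^{ss}$ splits the $\zmod\ell$ and $\mu_\ell$ eigenlines, and the inertia action, being trivial on $J[\fm]^{ss}$ and unipotent, must preserve this eigenspace decomposition. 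A unipotent automorphism that preserves each of two complementary $\Frob_p$-eigenspaces with distinct eigenvalues and commutes with $\Frob_p$ is forced to be the identity: the off-diagonal term that would witness ramification would have to intertwine the eigenvalue $1$ with the eigenvalue $p$, which is impossible when $1 \not\equiv p \pmod\ell$. Hence $I_p$ acts trivially.

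For the case $p \mid N$ with $p \| N$, I would instead argue through the Néron model directly: $\cJ_{\Z_p}$ is semi-abelian, and by the Deligne--Rapoport description the toric part corresponds to the character group of the dual graph of $X_0(N)_{\F_p}$, on which $U_p$ acts; the component group and the toric part are both quantities on which the Hecke action is pinned down, and $J[\fm]$ meets the toric part in a piece where $U_p$ acts by a square root of $1$. The hypothesis $T_p \not\equiv p$ (here $T_p = U_p$) rules out the ``ramified'' eigenvalue and forces $J[\fm]$ to lie in the part of $J[\ell]$ over $\Z_p$ that extends to a \emph{finite flat} (indeed, after the analysis above, \'etale-by-multiplicative and unramified) group scheme. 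Concretely I would use that $J[\fm]_{S'}$ is finite flat, look at its scheme-theoretic closure over $\Z_p$, and show the connected-\'etale sequence there is already defined over $\Z_p$ with trivial inertia, because the only obstruction lives in an $\Ext^1$ group between $\mu_\ell$ and $\zmod\ell$ over $\Z_p$ that is detected by ramification, and that obstruction class is an eigenvector for $U_p$ with the forbidden eigenvalue.

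The main obstacle I anticipate is the $p \| N$ case: one must be careful that the toric/\'etale decomposition of $J[\fm]$ at $p$ is compatible with the $U_p$-action in the way claimed, i.e. that ``$U_p \not\equiv p$'' really does exclude the unipotent extension and is not merely a statement about the semisimplification. This requires invoking the precise structure of the Néron model of $J_0(N)$ at $p$ (the Grothendieck--Raynaud description via the dual graph, and the compatibility of $U_p$ with the monodromy pairing), rather than just Eichler--Shimura; I would lean on the results of Mazur--Rapoport / Ribet on component groups and on \cite[\ts 3.1]{BK14} or the Schoof-category formalism of Remark \ref{remark : prosaic nugget} to make the extension-class argument rigorous. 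The prime-to-$N$ case, by contrast, should be essentially formal from Eichler--Shimura plus the eigenvalue-distinctness observation.
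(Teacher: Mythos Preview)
Your $p\nmid N$ paragraph is both unnecessary and incorrect. Unnecessary because good reduction at $p$ already makes $I_p$ act trivially on all of $J[\ell]$, hence on $J[\fm]$; that is the paper's one-line argument for this case. Incorrect because the implication ``$T_p\not\equiv p \Rightarrow 1\not\equiv p$'' you assert is false (if $p\equiv 1\pmod\ell$ and $p\nmid N$ then $T_p\equiv p+1\equiv 2\not\equiv 1\equiv p$, so the hypothesis still holds), and because inertia does not commute with a Frobenius lift, so the step ``a unipotent automorphism commuting with $\Frob_p$ and preserving its eigenspaces must be trivial'' is unjustified.

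The substantive case is $p\,\|\,N$, and here you name the right objects (semistable N\'eron model, toric part, character group) but do not assemble them into a proof. The assertion that ``$U_p$ acts by a square root of $1$'' on the relevant piece is not true in general (that relation holds only on the $p$-new part of the character group), and the sentence about ``the obstruction class being an eigenvector for $U_p$ with the forbidden eigenvalue'' is too vague to do work: the quantity measuring ramification is not an $\Ext^1$-class on which $U_p$ visibly acts. What is missing is precisely the paper's mechanism. Set $Q:=J[\fm]/J[\fm]^{I_p}$. Via Grothendieck's filtration, $Q$ embeds in $J[\ell]/J[\ell]^{I_p}$, which is a quotient of $X/\ell X$ for $X$ the toric character group, and on $X$ one has $T_p=\Frob_p$ by Ribet \cite[Prop.~3.8]{R90}. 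Separately, Proposition~\ref{proposition : mazur argument} forces $Q$ to be $0$ or $\mu_\ell$, so $\Frob_p$ acts on $Q$ by $p$. Combining these gives $(T_p-p)Q=0$; since $T_p-p\notin\fm$, it acts invertibly on $Q=Q[\fm]$, whence $Q=0$. Neither of the two key identifications --- $T_p=\Frob_p$ on $Q$ via the character group, and $\Frob_p=p$ on $Q$ via the $\mu_\ell$-structure --- appears in your outline, and both are essential; the ``distinct eigenvalues'' heuristic cannot replace them, since ramified extensions of $\mu_\ell$ by $\zmod\ell$ over $\Q_p$ do exist when $p\equiv -1\pmod\ell$.
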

In the paper \cite{RY}, Ribet and the author proved the preceding proposition when $N$ is squarefree. The idea is to use Grothendieck's semistable reduction theorem \cite{Gro72} and the structure of $J[\fm]$ in Section \ref{section : Mazur's argument}. The argument applies \textit{mutatis mutandis} as long as 
$J$ has semistable reduction at $p$, which is equivalent to saying that $p^2$ does not divide $N$.

For the convenience of readers, we give a complete proof.
\begin{proof}
If $p$ does not divide $N$, then $J$ has a good reduction at $p$ and $I_p$ acts trivially on $J[\ell]$. Since $J[\fm] \subset J[\ell]$, the result follows. 

Now, we assume that $p$ exactly divides $N$, i.e., $p$ divides $N$ but $p^2$ does not divide $N$.
Let $Q=J[\fm]/{J[\fm]^{I_p}}$ and it suffices to show that $Q=0$ if $T_p \not\equiv p \pmod \fm$.

First of all, we claim that $Q$ is unramified at $p$ and $T_p$ acts on $Q$ by $\Frob_p$. 
Let $\cJ_{\F_p}$ denote the special fiber of the N\'eron model of $J$ over $\F_p$. 
Then, by the Deligne-Rapoport model of $X_0(N)_{\F_p}$ \cite{DR73} and the theory of Picard functors by Raynaud \cite{Ra70}, $\cJ_{\F_p}$ is an extension of the component group by a semiabelian variety $\cJ^0_{\F_p}$, called the identity component. Moreover $\cJ^0_{\F_p}$ is an extension of an abelian variety $A_{\F_p}$, which is isomorphic to $J_0(N/p)_{\F_p}\times J_0(N/p)_{\F_p}$, by a torus $\cT_{\F_p}$. If the rank of $\cT_{\F_p}$ is $a$ and the dimension of $J_0(N/p)$ is $b$, then the dimension of $J$ is $a+2b$ and $J[\ell] \simeq (\zmod \ell)^{2(a+2b)}$. The $\ell$-torsion subgroup $J[\ell]$ has a natural filtration \cite[11.6.5]{Gro72}:
\begin{equation*}
0 \subset J[\ell]^t \subset J[\ell]^0 \subset J[\ell],
\end{equation*}
where $J[\ell]^0$ is its connected component over $\Z_p$ and $J[\ell]^t$ is the ``toric part'' of $J[\ell]^0$.
Note that $J[\ell]^0 \simeq \cJ^0_{\F_p}[\ell]$ is of rank $a+4b$ and $J[\ell]^t \simeq \cT_{\F_p}[\ell]$ is of rank $a$. Moreover by Serre and Tate \cite[Lemma 2]{ST68}, $J[\ell]^{I_p} \simeq \cJ_{\F_p}[\ell]$, which implies that $J[\ell]^0 \subset J[\ell]^{I_p}$.\footnote{In \cite[\ts 1.(b)]{M78}, Mazur writes $FJ[\ell]$ for the finite part of $J[\ell]$. Using \cite[Ch I, Prop. 1.3]{M77} or its generalization \cite[Cor. 1.5]{Con03}, we get $J[\ell]^{I_p} = FJ[\ell]$.}
The quotient $J[\ell]/{J[\ell]^0}$ (of rank $a$) is unramified and canonically isomorphic to $X/{\ell X}$, where $X$ is the character group of the torus of the special fiber of the N\'eron model of the dual abelian variety $J^\vee$ over $\F_p$ (cf. \cite[Cor. of Prop. 11]{MR91}). By autoduality of $J$, $X$ is isomorphic to the character group of $\cT_{\F_p}$. By \cite[Prop.  3.8]{R90}, the action of $T_p$ on $X$ is equal to that of $\Frob_p$ on $X$. Therefore the actions of $T_p$ and $\Frob_p$ on $J[\ell]/{J[\ell]^0}$ coincide, and the same is true on $J[\ell]/{J[\ell]^{I_p}}$.
Since $J[\fm] \subset J[\ell]$ and $J[\fm]^{I_p}=J[\fm] \cap J[\ell]^{I_p}$, the actions of $T_p$ and $\Frob_p$ on $Q=J[\fm]/{J[\fm]^{I_p}}$ coincide, which proves the claim.

Secondly, we show that $\Frob_p$ acts on $Q$ by $p$.
Since $J[\fm]$ is an extension of $\mu_\ell^{\oplus n}$ by $\zmod \ell$, $J[\fm]^{I_p}$ is at least of dimension $n$ and $Q=J[\fm]/{J[\fm]^{I_p}}$ is isomorphic to either $\mu_\ell$ or $0$. Therefore the claim follows.

Finally, since both $(T_p-\Frob_p)$ and $(\Frob_p-p)$ annihilate $Q$, so does $T_p-p$.  If $T_p-p \not\in \fm$, then it is a unit in $\T/{\fm}$ and hence it acts as an automorphism on $Q=Q[\fm]$. Therefore $Q=0$.
\end{proof}

\begin{corollary} \label{corollary : unramified of Q[m]}
Let $p\neq \ell$ be a prime whose square does not divide $N$. Then,
the quotient $\cQ[\fm]$ is unramified at $p$ if $T_p \not \equiv p \pmod \fm$.
\end{corollary}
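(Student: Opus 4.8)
The plan is to obtain this as an essentially formal consequence of Proposition \ref{proposition : ramification semistable}. First I would recall, from Definition \ref{definition : cQm}, that $\cQ[\fm]$ is by construction the quotient of $J[\fm]$ by the submodule $\S(N)[\fm]$; since $\S(N)$ is a subgroup scheme of $J$ defined over $\Q$ (indeed over $\Spec \Z[1/N]$, as in the discussion preceding Definition \ref{definition : cQm}), the submodule $\S(N)[\fm]$ is stable under $G_\Q$, so $\cQ[\fm]$ carries a natural $k_\fm[G_\Q]$-action and the projection $J[\fm] \surj \cQ[\fm]$ is $G_\Q$-equivariant. Next, fixing an inertia subgroup $I_p$ of $G_\Q$ at $p$ and using the hypotheses $p \neq \ell$, $p^2 \nmid N$, and $T_p \not\equiv p \pmod \fm$, I would invoke Proposition \ref{proposition : ramification semistable} to conclude that $I_p$ acts trivially on $J[\fm]$. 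Finally, a group acting trivially on a module acts trivially on every equivariant quotient, so $I_p$ acts trivially on $\cQ[\fm]$; that is, $\cQ[\fm]$ is unramified at $p$.

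I do not expect a genuine obstacle here: all the substance is hidden inside Proposition \ref{proposition : ramification semistable} (which itself rests on Grothendieck's semistable reduction theorem and the structural results of Section \ref{section : Mazur's argument}), and the passage from $J[\fm]$ to the quotient $\cQ[\fm]$ is purely formal. The only point that deserves an explicit sentence in the write-up is the remark that $\S(N)[\fm]$ is $G_\Q$-stable, so that $\cQ[\fm]$ is a legitimate Galois module rather than merely a group-scheme quotient. I would also note, looking ahead, that this corollary is exactly what is needed to control the primes of ramification of $\cQ[\fm]_{S'}$, feeding into the prosaic nugget framework of Remark \ref{remark : prosaic nugget} and the dimension bound of Theorem \ref{theorem : upper bound of dimension}.
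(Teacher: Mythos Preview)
Your proposal is correct and matches the paper's approach exactly: the paper's proof is the single line ``Since $\cQ[\fm]$ is the quotient of $J[\fm]$, it is clear,'' and you have simply spelled out the implicit points (Galois-stability of $\S(N)[\fm]$ and equivariance of the quotient map) that make this clear.
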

\begin{proof}
Since $\cQ[\fm]$ is the quotient of $J[\fm]$, it is clear.
\end{proof}
\ms

\subsection{The group of extensions}\label{section : the group of extensions}
In the rest of this section, we further assume that $\ell \geq 5$. 
\ms

For a prime $p\neq \ell$, Schoof shows that
\begin{equation*}
\dim_{\F_\ell} \Ext_{\Z[1/p]}(\mu_\ell, \zmod \ell) = \begin{cases}
1 & \text{ if } p \equiv \pm 1 \pmod \ell \\
0 & \text{ otherwise}
\end{cases}
\end{equation*}
where $\Ext_{\Z[1/p]}(\mu_\ell, \zmod \ell)$ denotes the group of extensions of $\mu_\ell$ by $\zmod \ell$ over $\Z[1/p]$ (cf. \cite[Cor. 4.2]{Sch05}). Brumer and Kramer generalize this result to any composite number $N$ prime to $\ell$. 
Let
\begin{equation*}
\cS(N) := \{ p \mid N :  p \equiv \pm 1 \pmod \ell \}
\end{equation*}
and let $\vp_\ell(N):=\# \cS(N)$.
The following is Proposition 4.2.1 of \cite{BK14}:
\begin{proposition}[Brumer-Kramer] \label{proposition : brumer-kramer}
$\dim_{\F_\ell} \Ext_{\Z[1/N]} (\mu_\ell, \zmod \ell) = \vp_\ell(N)$.
\end{proposition}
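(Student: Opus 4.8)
The plan is to compute $\Ext_{\Z[1/N]}(\mu_\ell,\zmod\ell)$ via Kummer theory, reducing the problem to a statement about the structure of a certain subgroup of $\Q^\times/(\Q^\times)^\ell$, and then to count dimensions by comparing with the case of prime-power level already handled by Schoof. An extension of $\mu_\ell$ by $\zmod\ell$ over $\Z[1/N]$ is classified by $\Ext^1_{\Z[1/N]}(\mu_\ell,\zmod\ell)$; taking Cartier duals interchanges the roles, so it is equivalent (and slightly more convenient) to describe $\Ext^1_{\Z[1/N]}(\zmod\ell,\mu_\ell)$, which by the long exact sequence attached to $0\to\mu_\ell\to\G_m\xrightarrow{\ell}\G_m\to 0$ on $\Spec\Z[1/N]$ is identified with $H^1(\Z[1/N],\mu_\ell)\simeq \Z[1/N]^\times/(\Z[1/N]^\times)^\ell$ once one checks that the relevant $\Pic$ is trivial (true since $\Z[1/N]$ is a PID). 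By Dirichlet, $\Z[1/N]^\times$ is $\{\pm1\}$ times the free group on the primes dividing $N$, so this group has $\F_\ell$-dimension equal to the number of primes dividing $N$ (as $\ell$ is odd, the $\pm1$ contributes nothing).

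The subtlety is that the group relevant to the Proposition is not all of $\Ext_{\Z[1/N]}$ but only the subgroup whose members are, in the terminology of Schoof, ``of the right type'' — i.e.\ extensions which are \emph{finite flat} at all primes and whose Galois action is as prescribed, so that one is computing $\Ext$ in the category of finite flat group schemes (or equivalently, after inverting $N$, the subgroup of $\Z[1/N]^\times/(\Z[1/N]^\times)^\ell$ cut out by local conditions at each $p\mid N$). The local analysis at a prime $p\mid N$ is exactly Schoof's computation: the image of $\Z[1/p]^\times/(\Z[1/p]^\times)^\ell$ in $\Q_p^\times/(\Q_p^\times)^\ell$ under the local condition is one-dimensional if $p\equiv\pm1\pmod\ell$ and zero otherwise, the point being that $p$ (as an element of $\Q_p^\times$) gives a nontrivial class precisely when $\mu_\ell\not\subset\Q_p$ fails in the appropriate way, i.e.\ when $p\equiv\pm1\pmod\ell$; for $p\not\equiv\pm1$ the extension is forced to be unramified and locally split, hence globally trivial after imposing the condition. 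Thus I would set up the commutative diagram relating $\Ext_{\Z[1/N]}$ to the product over $p\mid N$ of the local $\Ext_{\Q_p}$-groups and identify the relevant subgroup as the kernel/preimage of the ``good'' part, which by the global-to-local surjectivity (a consequence of the vanishing of an appropriate $\Sha$, or of a direct count) has dimension $\sum_{p\mid N}\dim_{\F_\ell}\Ext_{\Z[1/p]}^{\mathrm{good}} = \vp_\ell(N)$.

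Concretely the steps are: (1) Kummer-theoretic identification of $\Ext_{\Z[1/N]}(\mu_\ell,\zmod\ell)$ (or its dual) with a subgroup of $\Z[1/N]^\times/(\Z[1/N]^\times)^\ell$, using $\Pic(\Z[1/N])=0$; (2) recall Schoof's local computation at each $p\mid N$, which pins down the one-dimensional ``allowed'' local subspace exactly when $p\equiv\pm1\pmod\ell$; (3) assemble the local conditions into a global one and verify that the natural map from the global group to the product of allowed local subspaces is an isomorphism, by exhibiting, for each $p\in\cS(N)$, the class of $p\in\Z[1/N]^\times$ as a generator hitting the $p$-th local space and nothing else, and checking that classes supported at primes not in $\cS(N)$ are killed by the local conditions; (4) conclude $\dim_{\F_\ell}=\#\cS(N)=\vp_\ell(N)$. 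The main obstacle I expect is step (3): showing that the local conditions are ``independent'' and that there is no global obstruction (no nontrivial Tate–Shafarevich contribution for $\mu_\ell$ over $\Z[1/N]$ with these conditions) — this is where one genuinely needs a Poitou–Tate-type argument or a direct dimension count playing off the self-duality of $\mu_\ell\otimes\zmod\ell$, rather than merely quoting the prime-level case. Since the paper attributes this to Brumer–Kramer, I would in practice cite \cite[Prop. 4.2.1]{BK14} and only sketch the Kummer-theory reduction, noting that it is the natural globalization of \cite[Cor. 4.2]{Sch05}.
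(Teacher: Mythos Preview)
The paper does not prove this proposition at all; it simply attributes it to Brumer--Kramer and cites \cite[Prop.~4.2.1]{BK14}. So your final instinct---cite and move on---is exactly what the paper does. However, the sketch you give before reaching that conclusion contains a genuine error that would derail an attempted proof.

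The problem is the Cartier-duality step. Dualizing an extension $0 \to \zmod\ell \to E \to \mu_\ell \to 0$ gives $0 \to (\mu_\ell)^\vee \to E^\vee \to (\zmod\ell)^\vee \to 0$, which is again $0 \to \zmod\ell \to E^\vee \to \mu_\ell \to 0$. In other words, Cartier duality is an \emph{involution} on $\Ext_{\Z[1/N]}(\mu_\ell,\zmod\ell)$; it does \emph{not} interchange it with $\Ext_{\Z[1/N]}(\zmod\ell,\mu_\ell)$. And this matters: by the Kummer computation you wrote down, $\Ext_{\Z[1/N]}(\zmod\ell,\mu_\ell)\simeq \Z[1/N]^\times/(\Z[1/N]^\times)^\ell$ has dimension equal to the number of prime divisors of $N$, which is in general strictly larger than $\vp_\ell(N)$. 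Your subsequent paragraph about imposing extra ``local conditions of the right type'' is an attempt to repair a discrepancy that only arose because you passed to the wrong Ext group; no such conditions are part of the definition of $\Ext_{\Z[1/N]}(\mu_\ell,\zmod\ell)$.

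The honest computation (which is what Brumer--Kramer do, and what the paper summarizes immediately after stating the proposition) works with $\underline{\Hom}(\mu_\ell,\zmod\ell)=\F_\ell(-1)$ rather than $\mu_\ell$: one identifies $\Ext_{\Z[1/N]}(\mu_\ell,\zmod\ell)$ with $\Hom_{\F_\ell}(G_0,\F_\ell(-1))^{\D}$, where $G_0=\Gal(L/F)$, $F=\Q(\mu_\ell)$, and $L$ is the maximal elementary abelian $\ell$-extension of $F$ unramified outside $N$ and split over the prime above $\ell$. The twist by $-1$ together with the $\D$-equivariance is exactly what forces the contribution at a prime $p\mid N$ to vanish unless $p\equiv\pm 1\pmod\ell$, and the basis is then given by the classes $\Phi(\cE_p)$ for $p\in\cS(N)$.
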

\ms

For an extension $E \in \Ext_{\Z[1/N]}(\mu_\ell, \zmod \ell)$, we define $\Q(E)$ by the field defined by the points of $E(\ov{\Q})$. If $N=p \equiv \pm 1 \pmod \ell$ is a prime, then 
 there is a basis element $E \in \Ext_{\Z[1/p]}(\mu_\ell, \zmod \ell)$ by Schoof. Moreover $\Q(E)$ is a unique extension field of $\Q(\mu_\ell)$ of degree $\ell$ (up to isomorphism) such that
\begin{itemize}
\item
$\Q(E)/{\Q(\mu_\ell)}$ is unramified outside the primes above $p$ and split over $\l$, where $\l$ is the unique prime of $\Q(\mu_\ell)$ above $\ell$.
\item
$\Gal(\Q(\mu_\ell)/\Q)$ acts on $\Gal(\Q(E)/{\Q(\mu_\ell)})$ by the inverse of the mod $\ell$ cyclotomic character.
\end{itemize}
(cf. \cite[Lemma 3.3.10]{BK14}.)

\begin{notation} \label{notation : extension at p}
From now on, for a prime $p\equiv \pm 1 \pmod \ell$, we choose a basis of $\Ext_{\Z[1/p]}(\mu_\ell, \zmod \ell)$ and denote it by $\cE_p$. Note that $\cE_p$ can be regarded as an extension over $\Z[1/N]$ if $N$ is a multiple of $p$. 
In such a case, for an extension $E \in \Ext_{\Z[1/N]}(\mu_\ell, \zmod \ell)$, we denote by
\begin{equation*}
 E \sim \cE_p
\end{equation*} 
if there is a $k \in \F_\ell^\times$ such that $E \simeq k\cE_p$ as elements in $\Ext_{\Z[1/N]}(\mu_\ell, \zmod \ell)$.  
Note that if $E \sim \cE_p$, then $\Q(E) \simeq \Q(\cE_p)$, so we set $\cF_p := \Q(\cE_p)$, which is unique up to isomorphism.
\end{notation}
\ms

Now, we interpret Proposition \ref{proposition : brumer-kramer} using characters on a certain Galois group. We use the same notation as in the proof of \cite[Prop. 4.2.1]{BK14}. Let $L$ be the maximal elementary abelian $\ell$-extension of $F:=\Q(\mu_\ell)$ such that $L/F$ is unramified outside $N$ and split over $\l_F=\l$.
Let $G:=\Gal(L/\Q)$, $G_0:=\Gal(L/F)$ and $\D:=\Gal(F/\Q)$. For each prime $p \in \cS(N)$, let $G_p := \Gal(\cF_p/F)$.
Then, we have
\begin{equation}\label{equation : Phi}
\xyh{1}
\xymatrix{
\Phi : \Ext_{\Z[1/N]}(\mu_\ell, \zmod \ell) \ar[r]^-{\simeq} & \Ext^1_{\F_\ell[G]} (\mu_\ell, \F_\ell)  \ar[r]^-{\simeq} & H^1 (G, \F_\ell(-1))  \ar[r]^-{\simeq} & \Hom_{\F_\ell} (G_0, \F_\ell(-1))^{\D}.
}
\end{equation}
For each $E \in \Ext_{\Z[1/N]}(\mu_\ell, \zmod \ell)$, by a suitable choice of basis (as an $\F_\ell$-vector space), we can obtain a Galois representation of the form:
\begin{equation*}
\rho_E : \Gal(\Q(E)/\Q) \ra \left\{\bmat {\mathbf{1}} {c_E} 0 {\chi_\ell} : c_E \in H^1(\Gal(\Q(E)/\Q), \F_\ell(-1)) \right\}.
\end{equation*}
Then by the restriction of $c_E$ to $\Gal(\Q(E)/F)$ we get a character\footnote{Note that $\mat a b 0 d \mat {\mathbf{1}} {x} 0 {\mathbf{1}} \mat a b 0 d ^{-1}= \mat {\mathbf{1}} {ad^{-1}\cdot x} 0 {\mathbf{1}}$.} $\iota_E : \Gal(\Q(E)/F) \to \F_\ell(-1)$ 
(cf. \cite[Rmk 3.5.4]{BK14}).
The composition of the canonical projection $G_0 \surj \Gal(\Q(E)/F)$ and $\iota_E$ gives rise to the desired homomorphism $\Phi(E)$.

By consideration of ramification, 
the set $\{ \Phi(\cE_p) : p \in \cS(N) \}$ forms a basis of $\Hom_{\F_\ell} (G_0, \F_\ell(-1))^{\D}$.
In other words, for any $E \in \Ext_{\Z[1/N]}(\mu_\ell, \zmod \ell)$ we can find $a_p \in \F_\ell$ such that
\begin{equation*}
\Phi(E)=\sum_{p \in \cS(N)} a_p \cdot \Phi(\cE_p)
\end{equation*}
and $a_p =0$ for all $p\in \cS(N)$ if and only if $E \simeq \zmod \ell \oplus \mu_\ell$.
By the same discussion as above, we get the following:
\begin{corollary}\label{corollary : extension basis}
Suppose that $\cS(N)=\{ p_1, \dots, p_n \}$. For each $i$, let $E_i \in \Ext_{\Z[1/N]}(\mu_\ell, \zmod \ell)$ such that $E_i \sim E_{p_i}$. Then, for any $E \in \Ext_{\Z[1/N]}(\mu_\ell, \zmod \ell)$ there is an element
$(a_1, \dots, a_n) \in \F_\ell^n$ such that
\begin{equation*}
\Phi(E)=\sum_{p \in \cS(N)} a_p \cdot \Phi(E_p).
\end{equation*}
Moreover, we have
\begin{equation*}
(a_1, \dots, a_n)=(0, \dots, 0) \iff E \simeq \zmod \ell \oplus \mu_\ell.
\end{equation*}
\end{corollary}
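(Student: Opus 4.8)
\emph{Strategy.} The plan is to obtain this as a direct consequence of the basis statement established in the discussion preceding this corollary, namely that $\{\Phi(\cE_p) : p \in \cS(N)\}$ is a basis of $\Hom_{\F_\ell}(G_0, \F_\ell(-1))^{\D}$, combined with the observation that $\Phi$ is $\F_\ell$-linear. First I would note that $\ell$ annihilates $\mu_\ell$, hence annihilates $\Ext_{\Z[1/N]}(\mu_\ell, \zmod \ell)$, so that every group appearing in \eqref{equation : Phi} is an $\F_\ell$-vector space and each of the three maps composing $\Phi$ is an isomorphism of $\F_\ell$-vector spaces; consequently $\Phi$ itself is an isomorphism of $\F_\ell$-vector spaces. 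In particular $\Phi$ is injective, and the split extension $\zmod \ell \oplus \mu_\ell$, being the zero element of $\Ext_{\Z[1/N]}(\mu_\ell, \zmod \ell)$, is the unique element killed by $\Phi$.

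\emph{Key steps.} Next I would unwind the relation $E_i \sim \cE_{p_i}$: by Notation \ref{notation : extension at p} there is $k_i \in \F_\ell^\times$ with $E_i \simeq k_i \cE_{p_i}$, and by $\F_\ell$-linearity of $\Phi$ this yields $\Phi(E_i) = k_i\, \Phi(\cE_{p_i})$. Since each $k_i$ is a unit and $\{\Phi(\cE_p) : p \in \cS(N)\}$ is a basis, the rescaled family $\{\Phi(E_p) : p \in \cS(N)\}$ is again a basis of $\Hom_{\F_\ell}(G_0, \F_\ell(-1))^{\D}$. Writing the element $\Phi(E)$ in this basis produces the required scalars $(a_p)_{p \in \cS(N)} \in \F_\ell^n$ with $\Phi(E) = \sum_{p \in \cS(N)} a_p\, \Phi(E_p)$, and uniqueness of the coordinates gives the final equivalence: $(a_p)_{p} = (0,\dots,0)$ forces $\Phi(E) = 0$, hence $E \simeq \zmod \ell \oplus \mu_\ell$ by injectivity of $\Phi$, while conversely $E \simeq \zmod \ell \oplus \mu_\ell$ gives $\Phi(E) = 0$ and then all $a_p$ vanish because the $\Phi(E_p)$ are linearly independent.

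\emph{Main obstacle.} There is essentially none: the statement is a bookkeeping reformulation of the basis property already in hand, and the only point that deserves an explicit word is the $\F_\ell$-linearity of $\Phi$ --- which is precisely what allows the relation $\sim$ (rescaling by $\F_\ell^\times$ inside the $\Ext$-group) to be transported to a rescaling of the associated cohomology classes --- and this is immediate from the description of $\Phi$ as a composite of $\F_\ell$-linear isomorphisms in \eqref{equation : Phi}.
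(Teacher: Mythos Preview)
Your proposal is correct and follows the same route as the paper: the paper does not give a separate proof but simply writes ``By the same discussion as above,'' referring to the fact that $\{\Phi(\cE_p):p\in\cS(N)\}$ is a basis and that $\Phi$ is an isomorphism of $\F_\ell$-vector spaces. You have merely spelled out the details---that the relation $E_i\sim\cE_{p_i}$ amounts to rescaling by a unit $k_i\in\F_\ell^\times$, so $\{\Phi(E_i)\}$ is again a basis---which is exactly the content the paper leaves implicit.
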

\ms

\begin{remark}
Since $L/F$ is an elementary abelian $\ell$-extension, by Kummer theory 
\begin{equation*}
 G_0=\Gal(L/F) \simeq G_1 \times \cdots \times G_k
\end{equation*}
for some $k$ with $G_i \simeq \zmod \ell$ (cf. \cite[Cor. of Lemma 3]{Bir67}). Indeed $\Phi(E) \in \Hom_{\F_\ell} (G_0, \F_\ell(-1))^{\D}$ can be decomposed as follows:
\begin{equation*}
\Phi(E) : G_0 \surj \prod_{p \in \cS(N)} G_p \to \F_\ell(-1).
\end{equation*}
\end{remark}
\ms

\subsection{The dimension of $J[\fm]$}
As before, we assume that $\ell \geq 5$. 
Let
\begin{equation*}
\cS_\fm := \{ p \in \cS(N) : p^2 \mid N \} \cup \{ p \in \cS(N) : p^2 \nmid N \text { and } T_p-p \in \fm \}.
\end{equation*}
Note that by Lemma \ref{lemma : maximal ideal assumption}, $\cS_\fm$ is non-empty because $\fm$ is maximal.
The following is a generalization of \cite[Th. 4.4]{Yoo1}:
\begin{theorem}\label{theorem : upper bound of dimension}
As a vector space over $\T/{\fm} \simeq \F_\ell$, we have 
\begin{equation*}
\dim_{\F_\ell} \cQ[\fm] \leq 1+\# \cS_\fm.
\end{equation*}
\end{theorem}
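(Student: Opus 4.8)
The plan is to realize $\cQ[\fm]$ as an extension of finite flat group schemes over $S'=\Spec\Z[1/N]$ and to bound it using the Brumer--Kramer computation of $\Ext^1_{\Z[1/N]}(\mu_\ell,\zmod\ell)$ (Proposition~\ref{proposition : brumer-kramer}). By Corollary~\ref{corollary : structure of cQ[m]} there is an exact sequence $0\to\zmod\ell\to\cQ[\fm]\to\mu_\ell^{\oplus k}\to 0$ with $\mu_\ell\not\subset\cQ[\fm]$, so $\dim_{\F_\ell}\cQ[\fm]=1+k$ and it suffices to show $k\leq\#\cS_\fm$. By Remark~\ref{remark : prosaic nugget}, $\cQ[\fm]_{S'}$ is a prosaic nugget in Schoof's category $\ud{D}$, so the sequence above determines a class in
\begin{equation*}
\Ext^1_{\Z[1/N]}(\mu_\ell^{\oplus k},\zmod\ell)\;\simeq\;\Ext^1_{\Z[1/N]}(\mu_\ell,\zmod\ell)^{\oplus k},
\end{equation*}
that is, a $k$-tuple $(E_1,\dots,E_k)$ of extension classes. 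I would first check that $\mu_\ell\not\subset\cQ[\fm]$ is equivalent to the $\F_\ell$-linear independence of $E_1,\dots,E_k$: a copy of $\mu_\ell$ inside $\cQ[\fm]$ cannot map to $0$ in $\mu_\ell^{\oplus k}$, so it maps isomorphically onto a line corresponding to some $v=(v_1,\dots,v_k)\neq 0$ and splits the pulled-back extension, whose class is $\sum_i v_iE_i$; hence such a copy exists precisely when $\sum_i v_iE_i=0$ for some $v\neq 0$. In particular $k$ is bounded by the dimension of the $\F_\ell$-span of $\{E_1,\dots,E_k\}$ in $\Ext^1_{\Z[1/N]}(\mu_\ell,\zmod\ell)$.

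Next I would cut this span down by ramification. By Corollary~\ref{corollary : unramified of Q[m]}, $\cQ[\fm]$, hence each $E_i$, is unramified at every prime $q$ with $q^2\nmid N$ and $T_q-q\notin\fm$, i.e.\ at every prime of $\cS(N)\setminus\cS_\fm$; note each such $q$ exactly divides $N$. Using the basis $\{\Phi(\cE_p):p\in\cS(N)\}$ of Corollary~\ref{corollary : extension basis} together with Proposition~\ref{proposition : brumer-kramer}, one sees that for a prime $q\in\cS(N)$ exactly dividing $N$ the subspace of $\Ext^1_{\Z[1/N]}(\mu_\ell,\zmod\ell)$ of classes unramified at $q$ is identified with $\Ext^1_{\Z[1/(N/q)]}(\mu_\ell,\zmod\ell)$, which has dimension $\vp_\ell(N)-1$ and contains $\Phi(\cE_p)$ for all $p\neq q$ (as $\cE_p$ is unramified outside the primes above $p$, Notation~\ref{notation : extension at p}); being of codimension one it therefore equals $\mathrm{span}\{\Phi(\cE_p):p\in\cS(N),\ p\neq q\}$ (and in particular $\cE_q$ is ramified at $q$). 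Intersecting over all $q\in\cS(N)\setminus\cS_\fm$, each $\Phi(E_i)$ lies in $\mathrm{span}\{\Phi(\cE_p):p\in\cS_\fm\}$, a space of dimension $\#\cS_\fm$. Hence $k\leq\#\cS_\fm$, and $\dim_{\F_\ell}\cQ[\fm]=1+k\leq 1+\#\cS_\fm$.

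The step I expect to require the most care is the passage from the group scheme $\cQ[\fm]_{S'}$ to a class in $\Ext^1_{\Z[1/N]}(\mu_\ell,\zmod\ell)$ and the translation of $\mu_\ell\not\subset\cQ[\fm]$ into genuine linear independence of the $E_i$ (as opposed to a mere spanning statement): this is exactly where Remark~\ref{remark : prosaic nugget} (that $\cQ[\fm]_{S'}$ lies in $\ud D$, so its extension class is computed in the same category as in Proposition~\ref{proposition : brumer-kramer}) and Corollary~\ref{corollary : structure of cQ[m]} (whose non-containment assertion rests on Vatsal's theorem, Proposition~\ref{proposition : vatsal}) are essential. By contrast, once those inputs are in hand the ramification bookkeeping of the second step is routine, given Corollaries~\ref{corollary : unramified of Q[m]} and~\ref{corollary : extension basis}.
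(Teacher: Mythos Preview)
Your proposal is correct and follows essentially the same argument as the paper: both deduce from Corollary~\ref{corollary : structure of cQ[m]} that $\cQ[\fm]$ is an extension of $\mu_\ell^{\oplus k}$ by $\zmod\ell$ with $\mu_\ell\not\subset\cQ[\fm]$, translate the non-containment into linear independence of the $k$ pullback extensions $E_i$, and then bound $k$ by the dimension of the relevant $\Ext$ group via Brumer--Kramer. The only organizational difference is that the paper first prolongs $\cQ[\fm]_{S'}$ to a finite flat group scheme over $\Spec\Z[1/N_0]$ (with $N_0$ the product of primes in $S\smallsetminus S_0$) so that the ambient $\Ext$ group already has dimension $\#\cS_\fm$, whereas you remain over $\Z[1/N]$ and instead intersect the unramified-at-$q$ subspaces using the basis of Corollary~\ref{corollary : extension basis}; these are equivalent bookkeeping choices.
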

\begin{proof}
Let $S$ be the set of all prime divisors $p$ of $N$, and let $S_0$ be the subset of $S$ such that $p^2$ does not divide $N$ and $T_p-p \not\in \fm$. Let $N_0$ be the product of all elements in $S$ but not in $S_0$, i.e., 
\begin{equation*}
N_0:=\prod_{p \in S \sm S_0} p.
\end{equation*}
Then by Proposition \ref{proposition : ramification semistable} the Galois module $J[\fm]$ is unramified outside the primes dividing $\ell N_0$ and so is $\cQ[\fm]$. Therefore the corresponding group schemes $J[\fm]_{S'}$ and $\cQ[\fm]_{S'}$ prolong to a finite and flat scheme over $S''$, where $S''=\Spec ~\Z[1/{N_0}]$ (cf. \cite[Ch. I, Prop. 1.3]{M77} or \cite[Cor. 1.5]{Con03}). We denote them by $J[\fm]_{S''}$ and $\cQ[\fm]_{S''}$ respectively.

Let $\dim_{\F_\ell} \cQ[\fm] = 1+k$, so $\cQ[\fm]$ is an extension of $\mu_\ell^{\oplus k}$ by $\zmod \ell$. 
For $1\leq i\leq k$, let $\iota_i : \mu_\ell \inj \mu_\ell^{\oplus k}$ be the embedding into the $i$-th component and let $E_i$ denote ``the pullback by $\iota_i$,'' which is 
the extension defined by the following diagram:
\begin{equation*}
\xyv{1.5}
\xyh{2}
\xymatrix{
0 \ar[r] & \zmod \ell \ar@{=}[d]\ar[r] & \cQ[\fm]_{S''} \ar[r] & \mu_\ell^{\oplus k} \ar[r] & 0 \\
0 \ar[r] & \zmod \ell \ar[r] & E_i \ar@{^(->}[u] \ar[r] & \mu_\ell \ar[r] \ar@{^(->}[u]_-{\iota_i} & 0.
}
\end{equation*}
Then, $E_i$'s are extensions of $\mu_\ell$ by $\zmod \ell$ over $S''$. 

Now, we use the same convention as Section \ref{section : the group of extensions}. 
Let $L_0$ be the maximal elementary abelian $\ell$-extension of $F$ such that $L/F$ is unramified outside $N_0$ and split over $\l_F$. Let $G_0:=\Gal(L_0/F)$ and $\D:=\Gal(F/\Q)$. 
Let $\Phi(E_i)\in \Hom_{\F_\ell} (G_0, \F_\ell(-1))^{\D}$, defined in (\ref{equation : Phi}).
If $k > \#\cS_\fm$, then the set $\{ \Phi(E_i) : 1 \leq i \leq k \}$ is linearly dependent because
\begin{equation*}
\dim_{\F_\ell} \Ext_{\Z[1/{N_0}]}(\mu_\ell, \zmod \ell)=\dim_{\F_\ell} \Hom_{\F_\ell} (G_0, \F_\ell(-1))^{\D}=\#\cS_\fm.
\end{equation*}
Therefore there exists 
$(a_1, \dots, a_k) \in \F_\ell^k \sm \{(0, \dots, 0)\}$ such that $\sum_{i=1}^k a_i \cdot \Phi(E_i)=0$.  
Then, the map $\iota:= \sum_{i=1}^k a_i \cdot \iota_i : \mu_\ell \to \mu_\ell^{\oplus k}$ is injective and  
``the pullback by $\iota$'' gives rise to an embedding $\zmod \ell \oplus \mu_\ell \inj \cQ[\fm]_{S''}$:
\begin{equation*}
\xyv{1.5}
\xyh{2}
\xymatrix{
0 \ar[r] & \zmod \ell \ar@{=}[d]\ar[r] & \cQ[\fm]_{S''} \ar[r] & \mu_\ell^{\oplus k} \ar[r] & 0 \\
0 \ar[r] & \zmod \ell \ar[r] & \zmod \ell \oplus \mu_\ell \ar@{^(->}[u] \ar[r] & \mu_\ell \ar[r] \ar@{^(->}[u]_-{\iota} & 0.
}
\end{equation*}
This is a contradiction because $\mu_\ell \not\subset \cQ[\fm]$. Therefore $k \leq \# \cS_\fm$ and $\dim_{\F_\ell} \cQ[\fm]=1+k \leq 1 +\# \cS_\fm$ as claimed. 
\end{proof}

\begin{remark}
Let $\cQ[\fm]$ be an extension of $\mu_\ell^{\oplus k}$ by $\zmod \ell$ as above. Let $\Q(\cQ[\fm])$ be the field defined by the points of $\cQ[\fm](\ov{\Q})$. Then, $\Q(\cQ[\fm])$ contains $F$. Let $G^\fm=\Gal(\Q(\cQ[\fm])/\Q)$ and $G_0^\fm=\Gal(\Q(\cQ[\fm])/F)$.
By a suitable choice of basis, we can obtain a Galois representation of the form
\begin{equation*}
\rho_E : G^\fm \ra \left\{\left(\begin{matrix}
    \mathbf{1} & c_1 & c_2 & \dots  & c_k \\
    0 & \chi_\ell & 0 & \dots  & 0 \\
    0&   0 & \chi_\ell & \dots  & 0 \\
    \vdots & \vdots & \vdots & \ddots & \vdots \\
    0 & 0 & 0 & \dots  & \chi_\ell
\end{matrix} \right) : c_i \in H^1(G^\fm, \F_\ell(-1)) \right\}.
\end{equation*}
Then $\Phi(E_i)$ defined above is equal to the composition of the canonical projection $G_0 \to G_0^\fm$ 
and the restriction of $c_i$ to $G_0^\fm$.
\end{remark}

\begin{theorem}\label{theorem : lower bound of dimension}
For each $p \in \cS_\fm$, suppose that there is an injection defined over $\Q$  
\begin{equation*}
i_p : E_p \inj \cQ[\fm],
\end{equation*}
where $E_p \sim \cE_p$. Then, we have
\begin{equation*}
\dim_{\F_\ell} \cQ[\fm]=1+\# \cS_\fm.
\end{equation*}
\end{theorem}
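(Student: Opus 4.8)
The plan is to establish the inequality reverse to Theorem \ref{theorem : upper bound of dimension}, so that it suffices to prove $\dim_{\F_\ell}\cQ[\fm] \ge 1+\#\cS_\fm$. Write $\dim_{\F_\ell}\cQ[\fm]=1+k$. By Corollary \ref{corollary : structure of cQ[m]} there is a short exact sequence $0 \to \zmod\ell \to \cQ[\fm] \to \mu_\ell^{\oplus k} \to 0$ with $\mu_\ell \not\subset \cQ[\fm]$, and, exactly as in the proof of Theorem \ref{theorem : upper bound of dimension}, setting $N_0:=\prod_{p \in S \sm S_0} p$ and $S'':=\Spec\Z[1/N_0]$, the module $\cQ[\fm]$ prolongs to a finite flat group scheme $\cQ[\fm]_{S''}$ over $S''$ lying in the category $\ud{D}$ of Remark \ref{remark : prosaic nugget}. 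One checks directly from the definitions that $\cS(N_0)=\cS_\fm$, so that by Proposition \ref{proposition : brumer-kramer} $\dim_{\F_\ell}\Ext_{\Z[1/N_0]}(\mu_\ell,\zmod\ell)=\#\cS_\fm$, with basis $\{\cE_p : p\in\cS_\fm\}$.

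First I would turn each hypothesised injection $i_p : E_p \inj \cQ[\fm]$ into a nonzero vector $v_p \in \Hom_\Q(\mu_\ell,\mu_\ell^{\oplus k}) \cong \F_\ell^k$. Since $\ell\geq 5$ we have $\Hom_\Q(\zmod\ell,\mu_\ell^{\oplus k})=0$, so the copy of $\zmod\ell$ inside $E_p$ maps into the canonical $\zmod\ell\subset\cQ[\fm]$, and isomorphically because $i_p$ is injective. By the snake lemma, $i_p$ then induces an injection $\ov{i}_p : \mu_\ell=E_p/(\zmod\ell) \inj \mu_\ell^{\oplus k}$, which is the nonzero $v_p$; moreover $E_p$ is identified with the pullback $v_p^{*}(\cQ[\fm])$. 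Because $i_p$ is a morphism in $\ud{D}$ on generic fibres and $E_p\sim\cE_p$ already prolongs to $S''$, this pullback identity holds in $\Ext_{\Z[1/N_0]}(\mu_\ell,\zmod\ell)$. Writing the class of $\cQ[\fm]_{S''}$ in $\Ext_{\Z[1/N_0]}(\mu_\ell^{\oplus k},\zmod\ell) \cong \Ext_{\Z[1/N_0]}(\mu_\ell,\zmod\ell)^{\oplus k}$ as $(e_1,\dots,e_k)$ and using additivity of pullback, we get $[E_p]=\sum_{i=1}^{k}(v_p)_i\,e_i$; applying the isomorphism $\Phi$ of $(\ref{equation : Phi})$ gives
\begin{equation*}
\Phi(E_p)=\sum_{i=1}^{k}(v_p)_i\,\Phi(e_i)\qquad (p\in\cS_\fm).
\end{equation*}

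Next I would prove the $v_p$ are linearly independent. Suppose $\sum_{p\in\cS_\fm}c_p v_p=0$ with $c_p\in\F_\ell$. Summing the identities above against the $c_p$ and interchanging summations gives $\sum_p c_p\,\Phi(E_p)=\sum_{i}\big(\sum_p c_p(v_p)_i\big)\Phi(e_i)=0$. On the other hand $E_p\sim\cE_p$ gives $\Phi(E_p)=\lambda_p\,\Phi(\cE_p)$ for some $\lambda_p\in\F_\ell^\times$, and $\{\Phi(\cE_p):p\in\cS_\fm\}=\{\Phi(\cE_p):p\in\cS(N_0)\}$ is a basis of $\Hom_{\F_\ell}(G_0,\F_\ell(-1))^{\D}$ (with $G_0=\Gal(L_0/F)$, $F=\Q(\mu_\ell)$, as in the proof of Theorem \ref{theorem : upper bound of dimension}). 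Hence $\sum_p c_p\lambda_p\,\Phi(\cE_p)=0$ forces all $c_p=0$. Thus $k\geq\#\cS_\fm$, and combining with Theorem \ref{theorem : upper bound of dimension} yields $\dim_{\F_\ell}\cQ[\fm]=1+k=1+\#\cS_\fm$.

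The main obstacle is the second paragraph: upgrading the pullback identity $E_p=v_p^{*}(\cQ[\fm])$ from Galois modules over $\Q$ to an identity of classes in $\Ext_{\Z[1/N_0]}(\mu_\ell,\zmod\ell)$, which requires knowing that the generic-fibre injection $i_p$ prolongs to a morphism of finite flat group schemes over $S''$ and that the scheme-theoretic $\Ext$ group injects into the $G_\Q$-module $\Ext$. This is precisely where the prosaic-nugget formalism (Remark \ref{remark : prosaic nugget}), Proposition \ref{proposition : brumer-kramer}, and the hypothesis $E_p\sim\cE_p$ are used; granting this, what remains is the linear-algebra computation with $\Phi$ above.
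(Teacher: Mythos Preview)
Your argument is correct, but the paper's own proof is considerably shorter and avoids the machinery of $\Phi$ and Baer sums altogether. The paper simply argues by ramification: each $i_p(E_p)$ is a two–dimensional subspace of $\cQ[\fm]$ containing the unique copy of $\zmod\ell$, and since $i_p$ is defined over $\Q$ the image $i_p(E_p)$ is ramified at $p$ and unramified at every other prime in $\cS_\fm$. Hence $i_p(E_p)$ cannot lie inside $\sum_{q\neq p} i_q(E_q)$ (the latter being unramified at $p$), so inductively the span of all the $i_p(E_p)$ has dimension at least $1+\#\cS_\fm$.

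Your route instead linearises the problem: you pass to the induced vectors $v_p\in\F_\ell^k$, express $[E_p]$ as the Baer pullback $\sum_i (v_p)_i e_i$, and then use that $\{\Phi(\cE_p)\}$ is a basis to force the $v_p$ to be independent. This is a genuinely different organisation of the same information—what the paper encodes as ``ramified precisely at $p$'' you encode as ``$\Phi(E_p)$ is a nonzero multiple of the basis vector $\Phi(\cE_p)$''. Your version has the advantage of recycling the exact setup of Theorem~\ref{theorem : upper bound of dimension} and making the linear algebra completely explicit; the paper's version has the advantage of bypassing the technical point you flag in your last paragraph. Indeed, the paper never needs to know that the pullback identity $E_p = v_p^{*}(\cQ[\fm])$ holds in $\Ext_{\Z[1/N_0]}$: the ramification argument works directly with the $G_\Q$–modules. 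That said, your worry is not fatal—the $e_i$ do prolong to $S''$ by the argument already given in the proof of the upper bound, and for $\ell\geq 5$ the functor from finite flat group schemes over $\Z_\ell$ to $G_{\Q_\ell}$–modules is fully faithful on objects of this type, so the scheme–theoretic and Galois–module Ext groups agree on the classes in question.
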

\begin{proof}
By Theorem \ref{theorem : upper bound of dimension}, it suffices to show that $\dim_{\F_\ell} \cQ[\fm] \geq 1+\# \cS_\fm$.

If $\# \cS_\fm=1$ then$\dim_{\F_\ell} \cQ[\fm] \geq 2$ because $\dim_{\F_\ell} E_p=2$.

Now suppose that $\cS_\fm = \{p, q \}$ and we show that $\dim_{\F_\ell} \cQ[\fm] \geq 3$. To do this, it suffices to show that $i_p(E_p) \neq i_q(E_q)$, which is indeed true because the Galois modules associated to $E_p$ (resp. $E_q$) is ramified precisely at $p$ (resp. $q$), and both $i_p$ and $i_q$ are defined over $\Q$. Applying the same argument inductively, we get $\dim_{\F_\ell} \cQ[\fm] \geq 1+\# \cS_\fm$.
\end{proof}

\begin{remark} \label{remark : when dim=1+Sm}
If $\dim_{\F_\ell} \cQ[\fm]=1+\# \cS_\fm$, then there is an injection (defined over $\Q$)
$i_p : \cE_p \inj \cQ[\fm]$ for any $p \in \cS_\fm$. To show this, let $k=\#\cS_\fm$ and let $E_i$ be the extension of $\mu_\ell$ by $\zmod \ell$ defined in the proof of Theorem \ref{theorem : upper bound of dimension}. 
Then, $\Phi(E_i)$'s are linearly independent because $\mu_\ell \not\subset \cQ[\fm]$. Thus, the set $\{ \Phi(E_i) : 1 \leq i \leq k \}$ forms a basis of $\Hom_{\F_\ell} (G_0, \F_\ell(-1))^{\D}$. Since $\Phi(\cE_p) \in \Hom_{\F_\ell} (G_0, \F_\ell(-1))^{\D}$, we have
\begin{equation*}
\Phi(\cE_p) =\sum_{i=1}^k a_i \cdot \Phi(E_i)
\end{equation*}
for some $(a_1, \dots, a_k) \in \F_\ell^k$. 
Since $\Phi(\cE_p)$ is a non-trivial extension, $(a_1, \dots, a_k) \neq (0, \dots, 0)$.
As above, the pullback by $\sum_{i=1}^k a_i \cdot \iota_i : \mu_\ell \to \mu_\ell^{\oplus k}$, which is an embedding, defines an injection $i_p : \cE_p \inj \cQ[\fm]$. Since this embedding is compatible with Galois action, it is defined over $\Q$.
\end{remark}

\ms 

\section{Proofs} \label{section : proof}
In this section, we prove Theorems \ref{theorem : the main theorem}, \ref{theorem : level qr - level qr2} and \ref{theorem : main theorem 2}. Before doing that we introduce a ``level raising method'' which is very useful to check the assumption in Theorem \ref{theorem : lower bound of dimension}.
\subsection{Level raising method}
Let $\fm$ be a new rational Eisenstein prime of $\T(N)$ containing $\ell$. 
In this subsection, we assume that $\ell$ does not divide $2N$.
For any divisor $M$ of $N$, we define the ideal $\fm(M)$ of the Hecke ring of level $M$ corresponding to $\fm$ as follows:
\begin{equation*}
\fm(M):=(\ell, T_p-\e(p) : \text{ for all primes } p),
\end{equation*}
where 
\begin{equation*}
\e(p):=\begin{cases}
p+1 & \text{ if $p$ does not divide $M$},\\
~~0 & \text{ if $p^2$ divides $M$},\\
-1 & \text{ if  $p$ divides $M$ and } T_p \equiv -1 \pmod \fm, \\
~~1 & \text{  otherwise}.
\end{cases}
\end{equation*}
Since $\fm$ is new, if $T_p \equiv -1 \pmod \fm$, then $p^2$ does not divide $N$. Therefore $p^2$ does not divide $M$, either.
Note that $\fm(M)$ may not be maximal.

\begin{proposition}\label{proposition : level raising}
Let $N=Mp$ and let $\fm$ be a rational Eisenstein prime of $\T(N)$, which is new.
Suppose that $\fm(M)$ is maximal. Let $\cQ[\fm(M)]$ denote the quotient of $J_0(M)[\fm(M)]$ by $\S(M)[\fm(M)]$. Then, there is an injection defined over $\Q$
\begin{equation*}
\iota : \cQ[\fm(M)] \inj \cQ[\fm].
\end{equation*}
\end{proposition}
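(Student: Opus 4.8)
The plan is to construct $\iota$ from the two degeneracy maps $\a_p,\b_p:X_0(N)=X_0(Mp)\to X_0(M)$ by a Ribet-style level-raising argument. Since $p$ exactly divides $N$, equation~(\ref{equation : Up and Tp}) says that on the $p$-old subvariety $\g_p(J_0(M)\times J_0(M))$ the operator $T_p$ acts through $\mat{\tau_p}{p}{-1}{0}$, where $\tau_p$ is the $p$-th Hecke operator on $J_0(M)$, while for $q\neq p$ the operator $T_q$ acts diagonally through $\tau_q$ on each factor. As $p\nmid M$ we have $T_p-(p+1)\in\fm(M)$, so $\tau_p$ acts on $J_0(M)[\fm(M)]$ by $p+1$, and hence $(T_p-1)(T_p-p)$ annihilates $\g_p\big(J_0(M)[\fm(M)]\times J_0(M)[\fm(M)]\big)$. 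A short eigenvector computation with the matrix above then singles out
\[
\psi_p:=\a_p^*-\e(p)\,\b_p^*:J_0(M)\longrightarrow J_0(N),\qquad \psi_p(z)=\g_p(z,-\e(p)z)
\]
(with $\a_p,\b_p$ ordered so that the image lands in the $\e(p)$-eigenspace of $T_p$) as the map we want. It is defined over $\Q$ because the degeneracy maps are, and it carries $J_0(M)[\fm(M)]$ into $J_0(N)[\fm]$: for $z\in J_0(M)[\fm(M)]$ we have $\ell\psi_p(z)=\psi_p(\ell z)=0$; for $q\neq p$ we have $(T_q-\e(q))\psi_p(z)=\psi_p\big((\tau_q-\e(q))z\big)=0$, using that the integer $\e(q)$ defining $\fm$ and the one defining $\fm(M)$ coincide; and $(T_p-\e(p))\psi_p(z)$ equals, up to sign, $\a_p^*\big((p+1)(1-\e(p))z\big)$, which vanishes because either $\e(p)=1$, or else $\e(p)=-1$, in which case $\fm$ being new forces $p\equiv-1\pmod\ell$ (cf.\ \cite[Th.~1.2(3)]{Yoo2}), so that $\ell\mid p+1$ kills $z$.

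Next I would pass to the Shimura quotients. The group $\S(M)[\fm(M)]\simeq\mu_\ell^{\oplus a}$ is of $\mu$-type, hence so is its image under the homomorphism $\psi_p$; that image is moreover killed by $\fm$ by the previous step, so Proposition~\ref{proposition : vatsal} puts it inside $\S(N)[\fm]$. Therefore $\psi_p$ descends to a homomorphism, defined over $\Q$,
\[
\iota:\cQ[\fm(M)]=J_0(M)[\fm(M)]/\S(M)[\fm(M)]\longrightarrow J_0(N)[\fm]/\S(N)[\fm]=\cQ[\fm],
\]
and it remains to prove that $\iota$ is injective. Here I would invoke the structure theory of Section~\ref{section : the structure of the kernel}. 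Since $\fm(M)$ is maximal by hypothesis and contains $\ell$ and $\cI_0(M)$, it is a rational Eisenstein prime, so by Lemma~\ref{lemma : cuspidal in J[m]} and Corollary~\ref{corollary : structure of cQ[m]} the module $\cQ[\fm(M)]$ is an extension of some $\mu_\ell^{\oplus k}$ by $\zmod\ell$ into which $\mu_\ell$ does not embed, and its unique copy of $\zmod\ell$ is the image of the cuspidal line $\cC(M)[\fm(M)]\simeq\zmod\ell$. Consequently a nonzero subgroup scheme of $\cQ[\fm(M)]$ cannot be of $\mu$-type (else $\mu_\ell\subset\cQ[\fm(M)]$), so it must contain that cuspidal $\zmod\ell$; in particular $\ker\iota\neq0$ would force $\iota$ to vanish on $\cC(M)[\fm(M)]$. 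But $\cC(M)[\fm(M)]$ is \'etale, indeed $\Q$-rational, whereas $\S(N)[\fm]$ is of $\mu$-type, so $\psi_p(\cC(M)[\fm(M)])\cap\S(N)[\fm]=0$; hence $\iota$ is nonzero on the cuspidal line if and only if $\psi_p$ is. Thus injectivity of $\iota$ reduces to the single assertion $\psi_p(\cC(M)[\fm(M)])\neq0$.

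To finish, I would establish $\psi_p(\cC(M)[\fm(M)])\neq0$ by an explicit cusp computation: $\cC(M)[\fm(M)]$ is generated by one of the cuspidal classes of Proposition~\ref{proposition : cuspidal element}, the pullbacks $\a_p^*$ and $\b_p^*$ act on cuspidal divisors through the standard formulas describing how the cusps of $X_0(N)$ lie over those of $X_0(M)$, and a direct calculation identifies $\psi_p$ of this generator with a nonzero rational multiple of a cuspidal class of $X_0(N)$ whose order is divisible by $\ell$ (Proposition~\ref{proposition : cuspidal element}); alternatively one can use the injectivity of $\a_p^*$ on $J_0(M)$ together with the explicit effect of $\a_p^*$ and $\b_p^*$ on cusps to see that $\a_p^*-\e(p)\b_p^*$ cannot kill a nonzero cuspidal class. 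This concrete last step — pinning down $\ker\psi_p$ on the cuspidal subgroup — is the main obstacle; everything else is bookkeeping with Hecke eigenvalues and the results of Section~\ref{section : the structure of the kernel}.
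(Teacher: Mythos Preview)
Your approach is essentially the paper's: build the map from degeneracy maps, check Hecke-equivariance so that $J_0(M)[\fm(M)]$ lands in $J_0(N)[\fm]$, pass to the Shimura quotients, and reduce injectivity to the nonvanishing of the map on the cuspidal $\zmod\ell$. Two points deserve comment.

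\textbf{The main gap.} You open with ``since $p$ exactly divides $N$,'' but the proposition does not assume $p\nmid M$; the paper's proof splits into four cases according to whether $(p,M)=1$, $p\Vert M$, or $p^2\mid M$, and these are all needed for Corollary~\ref{corollary : level raising}. Your uniform recipe $\psi_p=\a_p^*-\e(p)\b_p^*$ handles the first two cases (with a slightly different but equally valid choice when $a_p=-1$: you use $\a_p^*+\b_p^*$ where the paper uses $p\a_p^*-\b_p^*$), and when $p^2\mid M$ your $\e(p)=0$ recovers the paper's $\g=\a_p^*$. But when $p\Vert M$ (so $p^2\Vert N$), the second matrix in~(\ref{equation : Up and Tp}) applies, $T_p\equiv 0\pmod\fm$ while $\tau_p\equiv 1\pmod{\fm(M)}$, and then $T_p\cdot\a_p^*(z)=\a_p^*(\tau_p z)=\a_p^*(z)\neq 0$ in general; your $\psi_p=\a_p^*$ does not land in the kernel of $T_p$. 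The paper uses $\g=p\a_p^*-\b_p^*$ here instead. So your argument is incomplete as written; the missing cases are not harder, but they do require separate formulas.

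\textbf{The cuspidal nonvanishing.} You correctly reduce injectivity of $\iota$ to $\psi_p(\cC(M)[\fm(M)])\neq 0$, exactly as the paper reduces to $\ker\g\cap(\zmod\ell)=0$. The paper does not carry out an explicit divisor computation on cusps; it simply invokes \cite[Th.~1.4]{Yoo6} for this fact. So the ``main obstacle'' you flag is real, but it is already packaged in the literature and you can cite it rather than redo it. (Your device of using Vatsal to push $\S(M)[\fm(M)]$ into $\S(N)[\fm]$ is a clean alternative to the paper's direct appeal to \cite[Th.~4]{LO91}.)
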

\begin{proof}
For a prime divisor $p$ of $N$, let $a_p \in \{0, 1, -1\}$ denote the image of $T_p$ in $\T(N)/\fm$. Since we assume that $\fm$ is new, $a_p = \pm 1$ if $p$ does not divide $M$, and $a_p=0$ otherwise. 

For ease of notation, we set $\fn:=\fm(M)$. Moreover, we set
\begin{equation*}
J:=J_0(M), J':=J_0(N), \S:=\S(M) \text{ and }\S':=\S(N).
\end{equation*}

In the four cases below, we will define the morphism $\g$ defined over $\Q$
\begin{equation*}
\g : J \to J'
\end{equation*}
so that the restriction of $\g$ on $J[\fn]$ induces a map $J[\fn] \to J'[\fm]$, which we also denote by $\g$. Furthermore, it gives rise to a commutative diagram:
\begin{equation*}
\xymatrix{
0 \ar[r] & \S[\fn] \ar[d]_-\g \ar[r] & J[\fn] \ar[d]_-\g  \ar[r] \ar[dr]^-{\pi \circ \g}&\cQ[\fn] \ar@{-->}[d]^-\iota \ar[r] & 0\\
0 \ar[r] & \S'[\fm] \ar[r] & J'[\fm]  \ar[r]^-\pi &\cQ[\fm] \ar[r] & 0.
}
\end{equation*}
Then, we will show that the kernel of $\pi \circ \g$ is $\S[\fn]$, so the dotted arrow in the commutative diagram is injective. Since all the maps in the diagram except $\iota$ are defined over $\Q$, so is $\iota$. Therefore the result follows.

Now, let $\a_p^*, \b_p^* : J \to J'$ denote two degeneracy maps in Section \ref{section : background}, which are defined over $\Q$. 
\begin{itemize}
\item Case 1: $(p, M)=1$ and $a_p=1$. For $x\in J$, let $\g(x):=\a_p^*(x)-\b_p^*(x)$. 

\item Case 2: $(p, M)=1$ and $a_p=-1$. For $x\in J$, let $\g(x):=p\a_p^*(x)-\b_p^*(x)$. Note that $p \equiv -1 \pmod \ell$ because $\fm$ is new.

\item Case 3: $p \mid M$ but $p^2 \nmid M$. For $x\in J$, let $\g(x):=p\a_p^*(x)-\b_p^*(x)$. (By definition, $\e(p)=1$.) 

\item Case 4: $p^2 \mid M$. For $x\in J$, let $\g(x):=\a_p^*(x)$. (By definition, $\e(p)=0$.)
\end{itemize}
In all cases, $\g(J[\fn]) \subset J'[\fm]$ by \cite[Prop.  2.4]{Yoo6}.

Let $K$ denote the kernel of $\pi \circ \g$. By \cite[Th. 4]{LO91} $\a_p^*(\S) \subset \S'$ and $\b_p^*(\S) \subset \S'$. Thus, $\g(\S) \subset \S'$ and hence
\begin{equation*}
\g(\S[\fn])=\g(\S \cap J[\fn]) \subset \g(\S) \cap \g(J[\fn])\subset \S' \cap J'[\fm]=\S'[\fm].
\end{equation*}
Therefore $\pi\circ \g(\S[\fn])=0$ and $\S[\fn] \subset K$.

On the other hand, in all four cases above, 
the intersection of the kernel of $\g$ and $\zmod \ell \subset J[\fn]$ is trivial by \cite[Th. 1.4]{Yoo6}.  Since $\S[\fn] \simeq \mu_\ell^{\oplus a}$ for some $a$, the intersection of $K$ and $\zmod \ell$ is trivial as well. This implies that $K$ is of multiplicative type because $J[\fn]^\text{ss} \simeq \zmod \ell \oplus \mu_\ell^{\oplus n}$ for some $n\geq 1$.
Since $K$ is annihilated by $\fn$, by the same argument as in Proposition \ref{proposition : criterion of constancy}, $K \simeq \mu_\ell^{\oplus c}$ for some $c$. By Proposition \ref{proposition : vatsal}, $K=K[\fn] \subset \S \cap J[\fn]=\S[\fn]$. Therefore $K=\S[\fn]$ and $\iota$ is injective.
\end{proof}

Applying this proposition inductively, we easily get the following:
\begin{corollary}\label{corollary : level raising}
Let $M$ be a divisor of $N$, and let $\fm$ be a rational Eisenstein prime of $\T(N)$, which is new. If $\fm(M)$ is still maximal, then there is an injection defined over $\Q$
\begin{equation*}
\iota : \cQ[\fm(M)] \inj \cQ[\fm].
\end{equation*}
\end{corollary}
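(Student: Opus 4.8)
The plan is to induct on the number $r$ of prime factors of $N/M$, counted with multiplicity. If $r=0$ then $M=N$ and $\fm(N)=\fm$ --- this equality holds because $\fm$ is new, so that by \cite[Lemma 2.2]{Yoo6} (or \cite[\ts 2.(c)]{M78}) one has $\fm=(\ell,\,T_p-\e(p),\,\cI_0(N))$ with precisely the $\e(p)$ produced by the operator $D\mapsto\fm(D)$ at $D=N$ --- so $\iota=\mathrm{id}$ works. For $r\geq 1$, fix any chain of divisors
\begin{equation*}
M=M_0\mid M_1\mid\cdots\mid M_r=N,\qquad M_i/M_{i-1}=:p_i\text{ prime},
\end{equation*}
and reduce to producing, for each $1\leq i\leq r$, an injection $\iota_i\colon\cQ[\fm(M_{i-1})]\inj\cQ[\fm(M_i)]$ defined over $\Q$; the composite $\iota_r\circ\cdots\circ\iota_1$ is then the required injection, defined over $\Q$, since $\fm(M_r)=\fm$ and $\fm(M_0)=\fm(M)$.

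Each $\iota_i$ is obtained from Proposition \ref{proposition : level raising} applied at level $M_i$ with distinguished prime $p_i$, lower level $M_{i-1}$, and with the ideal $\fm(M_i)$ of $\T(M_i)$ playing the role of ``$\fm$'' (so that the lower-level ideal appearing there is $\big(\fm(M_i)\big)(M_{i-1})$). Three things must be checked. (i) \emph{Transitivity}: $\big(\fm(M_i)\big)(M_{i-1})=\fm(M_{i-1})$; this is immediate from the definition of $D\mapsto\fm(D)$ once one notes that for a prime $p$ the exponent $\e(p)$ at $M_{i-1}$ depends only on the multiplicity of $p$ in $M_{i-1}$ and on whether $T_p\equiv-1\pmod\fm$, and that --- since $\fm$ is new, so that $T_p\equiv-1\pmod\fm$ forces $p$ exactly dividing $N$ and hence exactly dividing $M_i$ --- reading this off from $\fm$ or from $\fm(M_i)$ gives the same answer. (ii) \emph{Maximality is preserved}: Proposition \ref{proposition : level raising} needs $\fm(M_i)$ to be a rational Eisenstein prime, i.e.\ maximal. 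By Lemma \ref{lemma : maximal ideal assumption} (equivalently Proposition \ref{proposition : cuspidal element}), writing $\fm(M_i)=\fm_\ell(s,t,u)$, maximality is the condition $s+u\geq1$ and $s_0+t+u_1\geq1$, and a short case analysis of the single step $M_{i-1}\to M_i$ --- adjoining a fresh prime with $\e=1$ or $\e=-1$, or squaring a prime so that its eigenvalue of $T_p$ drops from $1$ to $0$, or raising an already-squared prime to a higher power --- shows that neither $s+u$ nor $s_0+t+u_1$ ever decreases along the chain (the only subtle case, squaring a prime $p\equiv1\pmod\ell$, trades a contribution of $p$ to $s_0$ for one to $u_1$). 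Hence $\fm(M_i)$ is maximal, and it contains $\cI_0(M_i)$ by construction, so it is a rational Eisenstein prime. (iii) \emph{Newness is not needed for $\fm(M_i)$}: the ideal $\fm(M_i)$ is generally not new (e.g.\ when $N^\square$ has a prime $\not\equiv\pm1\pmod\ell$), but newness of the top ideal is used in the proof of Proposition \ref{proposition : level raising} only to know that the image $a_{p_i}$ of $T_{p_i}$ in $\T(M_i)/\fm(M_i)$ lies in $\{0,\pm1\}$ and that $p_i\equiv-1\pmod\ell$ when $a_{p_i}=-1$; both hold here because $\fm$ itself is new ($a_{p_i}=-1$ means $T_{p_i}\equiv-1\pmod\fm$, whence $p_i$ exactly divides $N$ and $p_i\equiv-1\pmod\ell$). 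Thus the proof of Proposition \ref{proposition : level raising} applies verbatim with $\fm(M_i)$ in place of $\fm$ and produces $\iota_i$.

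Composing the $\iota_i$ gives the corollary. The only real work is the bookkeeping in (i)--(iii) --- transitivity of $D\mapsto\fm(D)$ and the monotonicity of the maximality invariants $s+u$ and $s_0+t+u_1$ along the chain --- and I expect this, rather than any new geometric input, to be the sole obstacle; everything substantive is already contained in Proposition \ref{proposition : level raising}.
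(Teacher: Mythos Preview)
Your proof is correct and follows exactly the approach the paper takes: the paper's entire proof is the sentence ``Applying this proposition inductively, we easily get the following,'' and you have carried out that induction explicitly, supplying the bookkeeping (transitivity of $D\mapsto\fm(D)$, monotonicity of the maximality invariants, and the observation that newness of the intermediate ideals is not needed) that the paper leaves to the reader. Note that your point (iii) is precisely the content of the Remark immediately following the corollary in the paper.
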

\begin{remark}
In the proof of Proposition \ref{proposition : level raising}, the assumption that $\fm$ is new is only used to get 
$a_p \in \{ 0, 1, -1 \}$, and $p \equiv -1 \pmod \ell$ if $a_p=-1$. Therefore when $\fm:=\fm_\ell(s, t, u)$ (without assuming $\fm$ is new), we can still get the same result.
\end{remark}
\ms

\subsection{Proof of Theorem \ref{theorem : the main theorem}}
From now on, let $\ell \geq 5$ be a prime and let 
\begin{equation*}
N=\prod_{i=1}^s p_i \prod_{j=1}^t q_j \prod_{k=1}^u r_k^{e(k)} ~~\text{ with } e(k)\geq 2
\end{equation*}
not divisible by $\ell$.
Let $\fm:=\fm_\ell(s, t, u)$ be a rational Eisenstein prime of $\T(N)$. 
By Assumption \ref{assumption : m=m(s, t, u)}, we have
\begin{equation*}
\cS_\fm = \{ p_1, \dots, p_{s_0}, q_1, \dots, q_t, r_1, \dots, r_{u_1} \}
\end{equation*}
and $\#\cS_\fm = s_0+t+u_1$, which is at least $1$ by Lemma \ref{lemma : maximal ideal assumption}. 
Note also that $J_0[\fm]=\cQ[\fm]$ by Proposition  \ref{proposition : Ling-Oesterle}
since $N$ is not squarefree.
Therefore, when $s_0\neq s$, or $u_0 \neq u$, or $t=0$, it suffices to check that
the assumption in Theorem \ref{theorem : lower bound of dimension} is fulfilled. In other words, it is enough to show that for each prime $q \in \cS_\fm$, there is an $E_q$ such that $E_q\sim \cE_q$ and $E_q \inj \cQ[\fm]$, which is defined over $\Q$.
\ms

{\bf Case 1 : $\bd{s_0 \neq s}$ or $\bd{u_0 \neq u}$.} 

Suppose that $q=r_k$ for some $1\leq k \leq u_1$. Let $\fn=\fm(q^2)$. Then, we claim that we can take $E_q=\cQ[\fn]$.
By Corollary \ref{corollary : level raising} we have $E_q \inj \cQ[\fm]$, defined over $\Q$. So, it suffices to show that $\cQ[\fn] \sim \cE_q$, which is obtained by the following:
\begin{itemize}
\item
By Proposition \ref{proposition : Ling-Oesterle}, $\S(q^2)[\fn]=0$ because $T_p \equiv 0 \not\equiv p \pmod \fm$. Therefore $\cQ[\fn]=J_0(q^2)[\fn]$.
\item
Note that $\cS_\fn=\{ q\}$. Therefore, $\dim_{\F_\ell} \cQ[\fn]=2$ 
by Lemma \ref{lemma : dim at least 2} and Theorem \ref{theorem : upper bound of dimension}.
\item
By Corollary \ref{corollary : structure of cQ[m]}, $\cQ[\fn] \in \Ext_{\Z[1/q]}(\mu_\ell, \zmod \ell)$, which is non-trivial.
\end{itemize}

Now, suppose that $q\in \cS_\fm \sm \{ r_1, \dots, r_{u_1} \}$, i.e., $q=p_i$  for some $1\leq i\leq s_0$ or $q=q_j$ for some $1\leq j\leq t$. 
Let $p:=p_s$ if $s_0 \neq s$, and let $p:=r_u$ if $s_0=s$ but $u_0 \neq u$. 
Then by assumption $p \not\equiv 1 \pmod \ell$. 
Let $\fn=\fm(pq)$, which is maximal by Lemma \ref{lemma : maximal ideal assumption}.
Then, we claim that we can take $E_q=\cQ[\fn]$.
By Corollary \ref{corollary : level raising}, it suffices to show that $\cQ[\fn] \sim \cE_q$. This follows from:
\begin{itemize}
\item
$\dim_{\F_\ell} J_0(pq)[\fn]=2$ by \cite[Th. 1.3]{Yoo1}.
\item
By Proposition \ref{proposition : Ling-Oesterle}, $\S(pq)[\fn]=0$ because $T_p \equiv 1 \not\equiv p \pmod \fm$. Thus, $\cQ[\fn]=J_0(pq)[\fn]$.
\item
By Corollary \ref{corollary : structure of cQ[m]}, $\cQ[\fn]$ is a non-trivial extension of $\mu_\ell$ by $\zmod \ell$. 
\item
By Corollary \ref{corollary : unramified of Q[m]}, $\cQ[\fn]$ is unramified at $p$ and hence 
$\cQ[\fn] \in \Ext_{\Z[1/q]}(\mu_\ell, \zmod \ell)$. 
\end{itemize}
\ms

{\bf Case 2 : $\bd{t=0}$}

If either $s_0\neq s$ or $u_0 \neq u$, then the claim follows from the cases above. Thus, we assume that $s_0=s$ and $u_0=u$, i.e, all prime divisors $p$ of $N$ are congruent to $1$ modulo $\ell$. Let $q \in \cS_\fm$. 

If $q=r_k$ for some $1\leq k\leq u$, then we can take $E_q=\cQ[\fm(q^2)]$ as in Case 1 above. 

Now let $q=p_i$ for some $1\leq i \leq s$. 
To find an appropriate $E_q$, we apply some ideas used in the paper \cite{RY}. 
For ease of notation, we set $\fa:=\fm(q)$ and $\T:=\T(q)$. Moreover, we set
\begin{equation*}
J:=J_0(q) \text{ and } \S:=\S(q).
\end{equation*}
Let $\S_\ell$ denote the $\ell$-primary part of $\S$ and let
\begin{equation*}
A:=J/{\S_\ell} \qa \pi : J \to A
\end{equation*}
be the corresponding quotient isogeny. Note that $\T$ acts on $A$ via $\pi$ because $\S_\ell$ is stable under the action of $\T$. We claim that $A[\fa] \sim \cE_q$ and there is an injection defined over $\Q$
\begin{equation*}
A[\fa] \inj \cQ[\fm(qr_1)],
\end{equation*}
which induces an injection $A[\fa] \inj \cQ[\fm]$ by the composition with the map
$\cQ[\fm(qr_1)] \inj \cQ[\fm]$ given in Corollary \ref{corollary : level raising}. (By our assumption, $u_0=u\geq 1$ so we can pick $r_1 \equiv 1 \pmod \ell$.)
Therefore we can take $E_q=A[\fa]$.

In the below, we prove our claim as following steps:
\begin{itemize}
\item
Step 1: $\dim_{\F_\ell} A[\fa] \geq 2$.
\item
Step 2: Let $V:=\pi^{-1}(A[\fa]) \subset J$.
Then, $V$ is annihilated by $\eta_p:=T_p-p-1$, for all bad primes $p$.
\item
Step 3: Fix a bad prime $p$ (which is defined below). Let $\g : J \to J_0(pq)$ be the map defined by $\g(x):=\a_p^*(x)-\b_p^*(x)$ (as in the proof of Proposition \ref{proposition : level raising}). Then, $B_p:=\g(V)$ is isomorphic to $A[\fa]$ and is annihilated by 
\begin{equation*}
\fb:=(\ell, U_p-1, T_q-1, \eta_r : \text{ for primes } r \nmid pq) \subset \T(pq).
\end{equation*}
(Here, we denote the $p$-th Hecke operator of level $pq$ by $U_p$ to distinguish it from the $p$-th Hecke operator of level $q$, which we already denoted by $T_p$.) 
\item
Step 4: For appropriately chosen $p$, $A[\fa] \simeq B_p=J_0(pq)[\fb] \sim \cE_q$.
\item
Step 5: By taking $p=r_1$, we get $A[\fa] \simeq B_{r_1} \inj \cQ[\fm(qr_1)]$, as claimed.
\end{itemize}

For Step 1 we apply Mazur's argument used in Lemma \ref{lemma : dim at least 2}. Since $A$ is isogenous to $J$, $\Ta_{\fa}(A)\otimes_\Z \Q$ is also of dimension $2$ over $\T_\fa \otimes_\Z \Q$, which implies that $\dim_{\F_\ell} A[\fa] \geq 2$.

For Step 2, we recall some results of Mazur in \cite{M77}. Let
\begin{equation*}
\cI = \cI_0(q):=(\eta_r : \text{ for all primes } r \neq q ) 
\end{equation*}
be the Eisenstein ideal of prime level $q$, where $\eta_r:=T_r-r-1$.
Then, $\cI \T_{\fa} \subset \T_\fa$ is principal and  $\eta_r$ is a generator for $\cI \T_\fa$ if and only if $r$ is a good prime (relative to the pair $(\ell, q)$) (Theorem 18.10 of Chapter II in \textit{loc. cit.}). Here, a prime $r$ is {\sf good} if both $r \not\equiv 1 \pmod \ell$ and $r$ is not an $\ell$-th power modulo $q$ (page 124 of \textit{loc. cit.}). We say a prime is {\sf bad} if it is not good. 
Choose a good prime $M$, so let $\cI \T_\fa = (\eta_M)$. 
Let $p$ be a bad prime. Then, $\eta_p$ is not a generator for $\cI \T_\fa$ so, $\eta_p = \l \eta_M$ for some $\l \in \fa\T_\fa$.\footnote{If $\l \not\in \fa\T_\fa$, then $\eta_p$ can generate $\cI \T_\fa$, which is a contradiction.} 
Let $A_\fa$ denote the $\fa$-divisible group associated to $A$ as in Section \ref{section : Mazur's argument}. Namely, $A_\fa :=\ilims k A[\fa^k]$ on which $\T_\fa$ acts. Via the canonical injection $A[\fa] \inj A_\fa$,
$\l \in \fa\T_\fa$ annihilates $A[\fa]$. Now let $\ell^k$ be the exact power of $\ell$ dividing $q-1$. Then, $\S_\ell$ is of order $\ell^k$ and $V \subset J[\ell^{k+1}]$. Furthermore, since $\S_\ell$ is killed by $\fa^k$, $V$ is killed by $\fa^{k+1}$. 
Thus, the action of $\T_\ell:=\T \otimes_\Z \Z_\ell$ on $V$ (viewed as a submodule in the $\ell$-divisible module $J_\ell$) factors through that of $\T_\fa$. Note that $\l(V) \subset \S_\ell$ because $\pi(\l (V)) \subset \l(\pi(V))=\l(A[\fa])=0$.
So, we have
\begin{equation*}
\eta_p(V)=\l \eta_M(V)=\eta_M( \l (V)) \subset \eta_M(\S_\ell)=0,
\end{equation*}
which implies $\eta_p(V)=0$ as claimed.

For Step 3, we note that the kernel of $\g$ is $\S$ by Ribet \cite[Th. 4.3]{R83}. Since $\S \cap V = \S_\ell$, we have $A[\fa] \simeq B_p$.
Now we show that $B_p$ is annihilated by $\fb$. Let $T \in \fb$ be a generator different from $U_p-1$ (i.e., $T=\ell$, $T_q-1$, or $\eta_r$ for primes $r\nmid pq$). 
By abuse of notation, $T$ can be regarded as an element of $\T$ and we get $T \in \fa$.
Therefore for any $x \in V$ we have $Tx \in \S_\ell$ because $\pi(Tx)=T(\pi x)=0$. (Note that $\pi$ commutes with the action of $\T$.) Since $\g$ commutes with $T$ and its kernel is $\S$, we get $T(\g(x))=\g(Tx)=0$ for any $x \in V$. In other words, $T$ annihilates $B_p$. 
Now let $T=U_p-1$. 
By Formula (\ref{equation : Up and Tp}) in Section \ref{section : background} and the fact that 
$V$ is annihilated by $\eta_p$ (Step 2), 
for any $x \in V$
\begin{equation*}
T(\g(x))=(U_p-1)(\g(x))=\g_p \mat {T_p-1} p {-1} {-1}\vect x {-x} =0.
\end{equation*} 
Therefore $B_p$ is annihilated by $\fb$ as insisted.

For Step 4, choose a prime $p$ such that $p\not\equiv \pm 1 \pmod \ell$ but $p\equiv 1 \pmod q$. (By the Dirichlet's theorem on arithmetic progression, we can do that.) Since $1$ is an $\ell$-th power modulo $q$, $p$ is a bad prime.
By \cite[Th. 4.9(1)]{Yoo1} and Proposition \ref{proposition : Ling-Oesterle}, $\dim_{\F_\ell} J_0(pq)[\fb]=\dim_{\F_\ell} \cQ[\fm]=2$. Since $\cS_\fb=\{ q \}$, we get $\cE_q \sim J_0(pq)[\fb]$. Since $A[\fa] \simeq B_p \subset J_0(pq)[\fb]$ by Step 3 and $\dim_{\F_\ell} A[\fa] \geq 2$ by Step 1, $A[\fa] \simeq B_p = J_0(pq)[\fb] \sim \cE_q$.

Finally, for Step 5 we take $p=r_1$, which is a bad prime. Then, $\fb=\fm(qr_1)$ and 
$A[\fa] \simeq B_{r_1} \subset J_0(qr_1)[\fb]$. Since $B_{r_1} \sim \cE_q$, $B_{r_1}$ cannot contain a module isomorphic to $\mu_\ell$. Therefore 
$B_{r_1} \cap \S(qr_1)=0$ and the following composition map is injective:
\begin{equation*}
A[\fa] \simeq B_{r_1} \inj J_0(qr_1)[\fb] \surj \cQ[\fb].
\end{equation*}
In both cases above, we have proved that there is an injection $E_q \inj \cQ[\fm]$ defined over $\Q$ for any $q \in \cS_\fm$, so the result follows from Theorem \ref{theorem : lower bound of dimension}.
\qed

\ms
\subsection{Proof of Theorem \ref{theorem : level qr - level qr2}}
By Theorem \ref{theorem : lower bound of dimension}, it suffices to find $E_q$ and $E_r$ satisfying the assumption.
Note that we can take $E_r=\cQ[\fm(r^2)]$ as in the proof of Theorem \ref{theorem : the main theorem} without any other hypothesis.

Now, we claim that we can take $E_q=\cQ[\fn]$ if $\dim_{\F_\ell} J_0(qr)[\fn]=3$.
Suppose  $\dim_{\F_\ell} J_0(qr)[\fn]=3$. 
First of all, $J_0(qr)[\fn]$ is unramified at $r$ by \cite[Th. 4.5]{Yoo1}. Thus, $\cQ[\fn]$ is also unramified at $r$.
Then, since $\S(qr)[\fn] \simeq \mu_\ell$ by Proposition \ref{proposition : Ling-Oesterle}, $\dim_{\F_\ell} \cQ[\fn]=2$ and hence $\cQ[\fn]$ is a non-trivial extension of $\mu_\ell$ by $\zmod \ell$ over $\Z[1/q]$. This implies that $\cQ[\fn] \sim \cE_q$. Finally, since $\fn=\fm(qr)$, by Proposition \ref{proposition : level raising} we get an injection $\cQ[\fn] \inj \cQ[\fm]$ defined over $\Q$, as claimed.
\qed

\ms
\subsection{Proof of Theorem \ref{theorem : main theorem 2}}
Suppose that $s_0=s$, $u_0=u\geq 1$ and $t\geq 1$.
Let $\fm:=\fm_\ell(s, t, u)$ be a rational Eisenstein prime of $\T(N)$. Then, 
\begin{equation*}
\cS_\fm = \cS(N) = \{ p_1, \dots, p_s, q_1, \dots, q_t, r_1, \dots, r_u \}
\end{equation*}
and $\# \cS_\fm = s+t+u$. 

First, $J_0(N)[\fm]=\cQ[\fm]$ by Proposition \ref{proposition : Ling-Oesterle}. Therefore by Theorem \ref{theorem : lower bound of dimension},
it suffices to show that there is an injection $E_q \inj \cQ[\fm]$ defined over $\Q$ for all $q \in \cS_\fm$, where $E_q \sim \cE_q$.

Then, for $q=p_i$ or $q=r_k$, we can find an $E_q$ as in Case 1 of the proof of Theorem \ref{theorem : the main theorem} above. So, it suffices to show that $\cE_q \inj \cQ[\fm]$ for $q=q_j$.

Last, let $q=q_j$ and $r=r_1$. Suppose Conjecture \ref{conjecture : t=1, u=1} holds. Then, 
$\dim_{\F_\ell} J_0(qr^2)[\fm(qr^2)]=3$.
Since $J_0(qr^2)[\fm(qr^2)]=\cQ[\fm(qr^2)]$ by Proposition \ref{proposition : Ling-Oesterle}, there is an injection $\cE_q \inj \cQ[\fm(qr^2)]$ defined over $\Q$ by Remark \ref{remark : when dim=1+Sm}. Composing with the injection $\cQ[\fm(qr^2)] \inj \cQ[\fm]$ in Corollary \ref{corollary : level raising}, we get a desired injection
$\cE_q \inj \cQ[\fm]$ defined over $\Q$ and the theorem follows.
\qed

\end{document}